\documentclass[reqno,11pt]{amsart}
\usepackage{amssymb, amsmath,latexsym,amsfonts,amsbsy, amsthm}
\usepackage{color}
\usepackage{mathrsfs}
\usepackage{esint}
\usepackage{graphics,color}
\usepackage{enumerate}
\usepackage{marginnote}

\setlength{\oddsidemargin}{0mm} \setlength{\evensidemargin}{0mm}
\setlength{\topmargin}{0mm} \setlength{\textheight}{230mm} \setlength{\textwidth}{165mm}




\newcommand{\beq}{\begin{equation}}
\newcommand{\eeq}{\end{equation}}
\newcommand{\ben}{\begin{eqnarray}}
\newcommand{\een}{\end{eqnarray}}
\newcommand{\beno}{\begin{eqnarray*}}
\newcommand{\eeno}{\end{eqnarray*}}


\renewcommand{\theequation}{\thesection.\arabic{equation}}


\newtheorem{theorem}{Theorem}[section]

\newtheorem{lemma}[theorem]{Lemma}
\newtheorem{proposition}[theorem]{Proposition}

\newtheorem{Theorem}{Theorem}[section]
\newtheorem{Definition}[Theorem]{Definition}
\newtheorem{Proposition}[Theorem]{Proposition}
\newtheorem{Lemma}[Theorem]{Lemma}
\newtheorem{Corollary}[Theorem]{Corollary}
\newtheorem{Remark}[Theorem]{Remark}




\begin{document}

\title[Boussinesq Equation]
{H\"{o}lder continuous weak solution of Boussinesq equation with diffusive temperature}

\author{Tianwen Luo}
\address{School of  Mathematical Sciences, Yau Mathematical Science center, Tsinghua University, Beijing 100084, China}
\email{twluo@mail.tsinghua.edu.cn}

\author{Tao Tao}
\address{School of  Mathematics Sciences, Shandong University, jinan, 250100, China}
\email{taotao@amss.ac.cn}
\author{Liqun Zhang}
\address{Academy of Mathematic and System Science , CAS Beijing 100190, China; and School of Mathematical Sciences, University of Chinese Academy of Sciences, Beijing 100049, China}
\email{lqzhang@math.ac.cn}

%

\date{\today}
\maketitle

\renewcommand{\theequation}{\thesection.\arabic{equation}}
\setcounter{equation}{0}


\setcounter{equation}{0}

\begin{abstract}
We show the existence of H\"{o}lder continuous periodic weak solutions of the 2d Boussinesq equation with diffusive temperature which satisfy the prescribed kinetic energy. More precisely, for any smooth $e(t):[0,1]\rightarrow R_+$ and $\varepsilon\in (0, \frac{1}{10})$, there exist $v\in C^{\frac{1}{10}-\varepsilon}([0,1]\times {\rm T}^2), \theta\in C_t^{1,\frac{1}{20}-\frac{\varepsilon}{2}}C_x^{2,\frac{1}{10}-\varepsilon}([0,1]\times {\rm T}^2)$ which solve (\ref{e:boussinesq equation}) in the sense of distribution and satisfy
\begin{align}
e(t)=\int_{{\rm T}^2}|v(t,x)|^2dx, \quad \forall t\in [0,1].\nonumber
\end{align}
\end{abstract}

\noindent {\sl Keywords:} 2d Boussinesq equation with diffusive temperature, H\"{o}lder continuous periodic weak solutions, Prescribed kinetic
 energy\

\vskip 0.2cm

\noindent {\sl AMS Subject Classification (2000):} 35Q30, 76D03  \

\section{Introduction}
In this paper, we consider the following 2d Boussinesq equation
\begin{equation}\label{e:boussinesq equation}
\begin{cases}
\partial_tv+v\cdot\nabla v+\nabla p=\theta e_2,\quad\quad \mbox{in}\quad {\rm T}^2\times [0,1]\\
\hbox{div}v=0,\\
\partial_t\theta+v\cdot \nabla\theta-\triangle \theta=0, \quad\quad \mbox{in}\quad {\rm T}^2\times [0,1],
\end{cases}
\end{equation}
where ${\rm T}^2$ denotes the 2-dimensional torus, and $e_2=\binom{0}{1}$. Here in our notations, $v$ is the velocity vector, $p$ is the pressure, and $\theta$ denotes the temperature or density which is a scalar function.
The Boussinesq equation was introduced to model many geophysical flows, such as atmospheric fronts and ocean circulations (see, for example, \cite{Ma},\cite{Pe}).

The global well-posedness of strong solution has been established by many authors for the Cauchy problem of (\ref{e:boussinesq equation}) in 2d with regularity data \textcolor{blue}{(}see, for example, \cite{Chae}, \cite{Hou}). For the 3-dimensional case, the global existence of  smooth solution of (\ref{e:boussinesq equation}) remains open.

Moreover, the study of weak solutions in fluid dynamics, including those which fail to conserve energy, is quite natural in the context of turbulent flow, and has been conducted by many people in the past two decades (see \cite{VS}, \cite{ASH1,ASH2},\cite{CDL,CDL0}). The triplet $(v,p,\theta)$ on $[0,1]\times{\rm T}^2$ is called a weak solution of (\ref{e:boussinesq equation}) if $\theta\in L^2_{loc}((0,1)\times{\rm T}^2), ~p\in L^2_{loc}((0,1)\times{\rm T}^2),~ \theta\in L^2_{loc}((0,1); H^1({\rm T}^2))$ and solve (\ref{e:boussinesq equation})  in the following sense:
\begin{align}
\int_0^1\int_{{\rm T}^2}(\partial_t\varphi\cdot v+\nabla\varphi:v\otimes v+p {\rm div}\varphi+\theta e_2\cdot\varphi )dxdt=0 \nonumber
\end{align}
for all $\varphi\in C_c^\infty((0,1)\times{\rm T}^2;R^2)$;
\begin{align}
\int_0^1\int_{{\rm T}^2}(\partial_t\phi\theta+v\cdot\nabla\phi\theta-\nabla\theta\cdot \nabla\phi)dxdt=0 \nonumber
\end{align}
for all $\phi\in C_c^\infty((0,1)\times{\rm T}^2;R)$ and
\begin{align}
\int_0^1\int_{{\rm T}^2}v\cdot\nabla\psi dxdt=0 \nonumber
\end{align}
for all $\psi\in C_c^\infty((0,1)\times{\rm T}^2;R).$

The study of constructing non-unique or dissipative weak solution to fluid systems is very fashionable in recent years. The construction is based on the convex integration method pioneered by De Lellis-Sz\'{e}kelyhidi Jr in \cite{CDL, CDL2}, where the authors tackle the Onsager conjecture for the incompressible Euler equation, showing that the incompressible Euler equation admits H\"{o}lder continuous solutions which dissipate kinetic energy.  More precisely, the Onsager conjecture on incompressible Euler equation can be stated as follows:
\begin{enumerate}
  \item $C^{0,\alpha}$ weak solutions are energy conservative when $\alpha>\frac{1}{3}$.
  \item For any $\alpha <\frac{1}{3}$, there exist dissipative solutions with $C^{0,\alpha}$ regularity .
\end{enumerate}

For this conjecture, the part (a) has been proved by P. Constantin, E, Weinan and  E. Titi in \cite{CET}. Slightly weaker assumptions on the solution were subsequently shown to be sufficient for energy conservation by P. Constantin, etc. in \cite{CPFR,DUR}, also see \cite{SH}. More recently, P. Isett and Sung-jin Oh gave a proof to this part of Onsager's conjecture for the Euler equations on manifolds by the heat flow method in \cite{ISOH1}.

Part (b) was proved by P. Isett in \cite{IS2}, based on a series of progress on this problem in \cite{TBU, BCDLI, BCDL1, BCDL2, CHO, DA, DAL, CDL3, ISOH2}, see also \cite{BCDLV} for the construction  of admissible weak solutions. Moreover, the idea and method can be used to construct dissipative weak solutions for other models, see \cite{BSV, ISV2, LX, Nov, RSH, TZ,TZ1,TZ2}.

 Recently, Buckmaster and Vicol established the non-uniqueness of weak solution to the 3D incompressible Navier-Stokes in \cite{BV} by introducing some new ideas. Furthermore, the method was used to construct weak solution in \cite{BCV, LT, LTZ, L, MS0, MS1, MS2}.

Motivated by the study of Onsager's conjecture of the incompressible Euler equation in earlier works and non-uniqueness of weak solutions to the Navier-Stokes equation, we consider the Boussinesq equation with diffusive temperature and want to know if the anomalous dissipation of kinetic energy can also happened when considering the temperature effects. The difference is that there are conversions between internal energy and mechanical energy, and we need to overcome the difficulty of interactions between velocity and temperature. To this end, we consider the existence of H\"{o}lder continuous periodic solutions of the 2d Boussinesq equations with diffusive temperature which satisfy the prescribed kinetic energy. Following the general scheme in the construction of H\"{o}lder continuous weak solution of the incompressible Euler equation and introducing some new ideas, we obtain the following result:

\begin{theorem}\label{t:main 1}
Assume that $e(t):[0,1]\rightarrow R$ is a given positive smooth function and let $\theta^0(x)\in X:= {\rm span}\{\sin(x_2), \cos(x_2)\}$. Then there exist
\begin{align}
    (v,p)\in C([0,1]\times{\rm T}^2), \quad \theta\in C([0,1]\times{\rm T}^2)\cap L^2(0,1; H^1({\rm T}^2))\quad \nonumber
\end{align}
such that they
solve the system \eqref{e:boussinesq equation} in the sense of distribution and
\begin{align}
e(t)=\int_{{\rm T}^2}|v|^{2}(t,x)dx, \quad \forall t\in [0,1].\nonumber
\end{align}
Furthermore, for any $\varepsilon\in (0,\frac{1}{10})$, there holds
\begin{align}
v\in C^{\frac{1}{10}-\varepsilon}_{t,x}, \quad p\in C^{\frac{1}{5}-\varepsilon}_{t,x}, \quad \theta \in C^{1,\frac{1}{20}-\frac{\varepsilon}{2}}_tC^{2,\frac{1}{10}-\varepsilon}_x. \nonumber
\end{align}
Moreover, $\theta$ satisfies the following identity:
\begin{align}
\frac12\|\theta(t,\cdot)\|_{L^2}^2+\int_0^t\|\nabla \theta(s,\cdot)\|^2_{L^2}ds=\frac12\|\theta^0(\cdot)\|_{L^2}^2, \quad \forall t\in [0,1]. \nonumber
\end{align}
\end{theorem}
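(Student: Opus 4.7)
The plan is to adapt the De Lellis--Sz\'ekelyhidi convex integration scheme, developed for incompressible Euler, to the coupled Boussinesq system with diffusive temperature. I would iteratively construct smooth approximate solutions $(v_q,p_q,\theta_q,R_q)$ to a Boussinesq--Reynolds system
\begin{align*}
\partial_t v_q+\mathrm{div}(v_q\otimes v_q)+\nabla p_q-\theta_q e_2 &= \mathrm{div}\,R_q, \\
\mathrm{div}\,v_q &= 0, \\
\partial_t\theta_q+v_q\cdot\nabla\theta_q-\Delta\theta_q &= 0,
\end{align*}
in which $\theta_q$ is always determined \emph{exactly} by solving the linear advection--diffusion equation with transport velocity $v_q$ and initial datum $\theta^0$. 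The base case is $v_0\equiv 0$, $\theta_0(t,x)=e^{-t}\theta^0(x)$ (which stays in $X$ because $\theta^0$ is an eigenfunction of $-\Delta$ with eigenvalue $1$), with a small artificial Reynolds stress seeding the iteration. Parameters are chosen in the usual form $\lambda_q=\lceil\lambda_0^{b^q}\rceil$ and $\delta_q=\lambda_q^{-2\alpha}$ with $\alpha=\tfrac{1}{10}-\varepsilon$.

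At each inductive step I would build the velocity perturbation $w_{q+1}=v_{q+1}-v_q$ as a superposition of highly oscillatory building blocks (Beltrami waves or Mikado-type flows of frequency $\lambda_{q+1}$) modulated by slow amplitudes chosen algebraically so that the low-frequency part of $w_{q+1}\otimes w_{q+1}$ cancels $R_q$ and so that $\int_{\mathrm{T}^2}|w_{q+1}|^2\,dx$ tunes $\|v_{q+1}\|_{L^2}^2$ toward the prescribed energy $e(t)$. Given $v_{q+1}$, the updated $\theta_{q+1}$ is obtained by re-solving the parabolic equation; parabolic Schauder estimates together with the high oscillation of $w_{q+1}$ and the damping of the Laplacian yield quantitative smallness of $\theta_{q+1}-\theta_q$. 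The new Reynolds defect $R_{q+1}$ then splits into the standard Euler--convex--integration contributions (transport, oscillation, Nash errors) plus a new \emph{buoyancy error} whose divergence equals $(\theta_{q+1}-\theta_q)e_2$, handled by an antidivergence operator applied to the above temperature difference.

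With parameters tuned so that $\|v_{q+1}-v_q\|_{C^0}\lesssim\delta_{q+1}^{1/2}$ and $\|R_q\|_{L^1}\lesssim\delta_{q+1}$, the sequence $v_q$ converges in $C^{1/10-\varepsilon}$ to a limit $v$ solving the velocity equation distributionally. The limit temperature $\theta$ then solves a linear parabolic equation with $C^{1/10-\varepsilon}$ transport coefficient, and standard parabolic Schauder theory yields a gain of two spatial and one time derivative (with parabolic scaling), giving $\theta\in C_t^{1,1/20-\varepsilon/2}C_x^{2,1/10-\varepsilon}$. The $L^2$ identity for $\theta$ follows by testing the equation against $\theta$ and using this high regularity; the prescribed kinetic energy for $v$ follows from the amplitude tuning and $R_q\to 0$. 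The principal obstacle, as I see it, is precisely the temperature--velocity coupling: one must propagate both smallness and regularity of $\theta_{q+1}-\theta_q$ through the iteration, balancing the transport error produced at scale $\lambda_{q+1}$ by $w_{q+1}$ against the $\lambda_{q+1}^{-2}$ smoothing of the Laplacian. This trade-off forces the H\"older exponent to drop from the usual Euler value near $1/5$ down to $1/10$, and the special choice $\theta^0\in X$ provides a spectrally clean starting point that prevents the buoyancy forcing from injecting uncontrolled low-frequency resonances back into the velocity iteration.
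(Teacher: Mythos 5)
Your overall strategy matches the paper's: a Boussinesq--Reynolds iteration in which $\theta_q$ always solves the \emph{exact} advection--diffusion equation with transport field $v_q$, the velocity perturbation is built from modulated $2$D stationary flows with a time partition of unity and transported phases \`a la \cite{BCDLI}, and the new buoyancy error is $\mathcal{R}\big((\theta_{q+1}-\theta_q)e_2\big)$. However, the central technical step that makes this error controllable is not identified, and one of your structural claims is inconsistent with the estimates.

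\textbf{The missing mechanism.} You say the smallness of $\theta_{q+1}-\theta_q$ follows from ``parabolic Schauder estimates together with the high oscillation of $w_{q+1}$ and the damping of the Laplacian.'' Schauder alone does not see the oscillatory gain: the crude bound is $\|\theta_{q+1}-\theta_q\|_0\lesssim\|w\cdot\nabla\theta_q\|_0\lesssim\delta_{q+1}^{1/2}\|\nabla\theta_q\|_0$, which after applying $\mathcal{R}$ is \emph{not} $\ll\delta_{q+2}$. What actually closes the loop in the paper is a two-part $L^2$ argument: (i) an energy estimate for the transport--diffusion equation with forcing of the form $\sum_k a_k(t,x)e^{i\lambda_{q+1}k\cdot x}$, integrating by parts against the fast phase and absorbing $\nabla\theta$ into the dissipation, which yields $\|\theta_{q+1}-\theta_q\|_{L^\infty L^2}\lesssim\delta_{q+1}^{1/2}\lambda_{q+1}^{-1}$ plus propagated bounds on $\nabla^N(\theta_{q+1}-\theta_q)$; and (ii) the elementary bound $\|\mathcal{R}f\|_{C^0}\lesssim\|f\|_{\dot H^s}$ for any $s>0$, which upgrades this $L^\infty_t L^2_x$ smallness (interpolated into $\dot H^\varepsilon$) to the $C^0$ smallness $\|\mathcal{R}^2\|_0\lesssim\delta_{q+1}^{1/2}\lambda_{q+1}^{-1+\varepsilon}$ needed in the stress estimate. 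Without (ii), an $L^2$ bound on $\theta_{q+1}-\theta_q$ does not control the sup norm of its antiderivative. Both ingredients, plus the supporting $L^\infty L^2$ bounds on $\nabla^N\theta_q$, require dedicated lemmas; they are where the real work in coupling temperature to velocity resides. Parabolic Schauder is used only at the very end, once $v\in C^{\alpha}$ is fixed, to read off $\theta\in C^{1,\alpha/2}_tC^{2,\alpha}_x$.

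\textbf{The misdiagnosis of the exponent.} You attribute the drop to $1/10$ to the temperature--velocity trade-off. In fact $\mathcal{R}^2\sim\delta_{q+1}^{1/2}\lambda_{q+1}^{-1}$ is negligible compared to the transport and Nash errors $\mathcal{R}^0,\mathcal{R}^1\sim\delta_{q+1}^{1/2}\delta_q^{1/4}\lambda_q^{1/2}\lambda_{q+1}^{-1/2}$ after the standard choice $\mu=\delta_q^{1/4}\lambda_q^{1/2}\lambda_{q+1}^{1/2}$, $\ell=\mu^{-1}$. The binding constraint $\delta_{q+1}^{1/2}\delta_q^{1/4}\lambda_q^{1/2}\lambda_{q+1}^{-1/2}\lesssim\delta_{q+2}$ is the usual Euler one for this scheme, and minimizing $bc$ subject to it yields $bc=5$, hence $\alpha<\tfrac{1}{2bc}=\tfrac1{10}$, independent of the temperature. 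The temperature term only needs to be dominated, which it comfortably is.

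\textbf{Minor point.} Your base case $v_0\equiv 0$ cannot satisfy the paper's inductive energy estimate $\big|e(t)(1-\delta_{q+1})-\int|v_q|^2\big|\le\frac{\delta_{q+1}}{4}e(t)$ at $q=0$ unless you re-normalize $\delta_1$; the paper instead chooses an explicit shear $v_0$ with $\int|v_0|^2=(1-\delta_1)e(t)$ and a corresponding off-diagonal $\mathring R_0$, exactly so that the energy constraint and $\theta_0=e^{-t}\theta^0$, $p_0=e^{-t}\int_0^{x_2}\theta^0$ are all exact at step $0$. (Your observation that $\theta^0\in X$ is a $-\Delta$-eigenfunction of eigenvalue $1$ is the right reason this works cleanly, but it has little to do with blocking low-frequency resonances.)
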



\begin{Remark}
We consider the following 2d Boussinesq-equation with fractional dissipation in velocity:
\begin{equation}\label{e:boussinesq equation with factional diffusion}
\begin{cases}
\partial_tv+v\cdot\nabla v+\nabla p+(-\triangle)^\alpha v=\theta e_2,\quad\quad \mbox{in}\quad {\rm T}^2\times [0,1]\\
{\rm div}v=0,\\
\partial_t\theta+v\cdot \nabla\theta-\triangle \theta=0, \quad\quad \mbox{in}\quad {\rm T}^2\times [0,1].
\end{cases}
\end{equation}
For $\alpha< \frac12$, we also can construct H\"{o}lder continuous weak solutions for (\ref{e:boussinesq equation with factional diffusion}) by the argument in this paper. For $\frac12\leq \alpha < 1$, we have constructed finite energy weak solutions for (\ref{e:boussinesq equation with factional diffusion}) in \cite{LTZ}.
\end{Remark}


\setcounter{equation}{0}

\section{Proof of main result and Plan of the paper}

As in \cite{BCDLI}, the proof of Theorem \ref{t:main 1} will be achieved through an iteration procedure. In this and subsequent sections, $\mathcal{S}_0^{2\times2}$  denotes the vector space of
trace-free symmetric $2\times2$ matrices.
\begin{Definition}
Assume $(v,p,\theta,\mathring{R})$ are smooth functions on $[0,1]\times {\rm T}^2$ taking values,~respectively,~in $R^2,R,R,\mathcal{S}_{0}^{2\times2}$.
We say that they solve the Boussinesq-Reynold system if
\begin{equation}\label{e:boussinesq-reynold equation}
\begin{cases}
\partial_tv+v\cdot\nabla v+\nabla p=\theta e_2+{\rm div}\mathring{R},\\
{\rm div}v=0,\\
\partial_t\theta+v\cdot \nabla\theta-\triangle\theta=0,\\
\theta(0,x)=\theta^0(x),
\end{cases}
\end{equation}
where $\theta^0(x)\in X$.
\end{Definition}
We now state the main proposition of this paper, by which Theorem \ref{t:main 1} is implied directly. For the statement of the main proposition,  we introduce some notations.

In the following, $m=0,1,\cdot\cdot\cdot$, $\alpha\in (0,1)$ and $\beta$ is a multi-index. We define the H\"{o}lder semi-norms as
\begin{align}
[f]_{m+\alpha}=\max_{|\beta|=m} \sup_{x\neq y,t} \frac{|\nabla^\beta f(t,x)-\nabla^\beta f(t,y)|}{|x-y|^\alpha}\nonumber
\end{align}
and semi-norms
\begin{align}
[f]_m=\sum_{|\beta|=m}\|\nabla^\beta f\|_0 , \nonumber
\end{align}
where \begin{align}
\|v\|_0:=\sup_{t,x}|v(t,x)|, \quad \|v(t,\cdot)\|_0:=\sup_{x}|v(t,\cdot)|, \nonumber
\end{align}
and $\nabla^\beta$ are spatial derivatives. Then, define norms
\begin{align}
\|f\|_m=\sum_{k\leq m}[f]_k, \quad \|f(t,\cdot)\|_m=\sum_{k\leq m}[f(t,\cdot)]_k \nonumber
\end{align}
and H\"{o}lder norms
\begin{align}
\|f\|_{m+\alpha}=\|f\|_m+[f]_{m+\alpha}. \nonumber
\end{align}

Moreover, we introduce two parameters:
\begin{align}
\delta_q=a^{-b^q}, \quad a^{cb^{q+1}}\leq \lambda_q\leq 2a^{cb^{q+1}}, \nonumber
\end{align}
where $a,b,c>1$ and $a$ is a integer.

\begin{proposition}\label{p: iterative 1}
Let $e(t), \theta^0(x)$ be as in Theorem \ref{t:main 1}. Then we can choose two positive constants $\eta$ and $M$ only dependent of $e(t)$ such that the following holds.
For any small $\gamma> 0$ and $b=\frac{6+\gamma}{4}, c=\frac{4(5+\gamma)}{6+\gamma}$, if $a$ is sufficiently large, then there exists a sequence of functions $(v_q,~ p_q, ~\theta_q, ~\mathring{R}_q)\in C^{\infty}([0,1]\times {\rm T}^2)$ starting with $(v_0,~ p_0, ~\theta_0, ~\mathring{R}_0)$, solving the Boussinesq-Stress system (\ref{e:boussinesq-reynold equation}) and satisfying the following estimates
\begin{align}
    & \|\mathring{R}_q\|_0\leq \eta\delta_{q+1},\quad \|\mathring{R}_q\|_1\leq \eta\delta_{q+1}\lambda_q,  \label{e:Reynold stress estimate}\\[3pt]
    &\|v_{q+1}-v_q\|_0 \leq M\delta_{q+1}^{\frac12}, \quad \|v_{q+1}-v_q\|_m \leq C(m)\delta_{q+1}^{\frac12}\lambda^m_{q+1}, \quad m\geq 1 , \label{e:velocity difference estimate} \\[5pt]
    &\|p_{q+1}-p_q\|_0 \leq M^2\delta_{q+1}, \quad  \|p_{q+1}-p_q\|_1 \leq M^2\delta_{q+1}\lambda_{q+1}, \label{e:pressure difference estimate}\\[3pt]
    &\frac12\|\theta_q(t,\cdot)\|_{L^2}^2+\int_0^t\|\nabla\theta_q(s,\cdot)\|_{L^2}^2ds=\frac12 \|\theta^0(\cdot)\|_{L^2}^2, \quad \forall t\in [0,1],\label{e:energy identity}\\[3pt]
    &\|(\theta_{q+1}-\theta_q)(t,\cdot)\|_{L^2}^2+\int_0^t\|\nabla(\theta_{q+1}-\theta_q)(s,\cdot)\|^2_{L^2}ds\leq 4M^2\|\theta^0\|^2_0 \delta_{q+1}, \quad \forall t\in [0,1],\label{e:energy estimate on temperature}\\[3pt]
    &\Big| e(t)(1-\delta_{q+1})-\int_{{\rm T}^2}|v_q|^{2}(x,t)dx\Big|\leq\frac{\delta_{q+1}}{4}e(t),\quad \forall t \in [0,1].
    \label{e:energy difference estimate}
\end{align}
Moreover, there holds
\begin{align}\label{e:time derivative estimate}
\|\partial_t(v_{q+1}-v_q)\|_0\leq C\delta_{q+1}^{\frac12}\lambda_{q+1}, \quad \|\partial_t(p_{q+1}-p_q)\|_0\leq C\delta_{q+1}\lambda_{q+1}.
\end{align}
\end{proposition}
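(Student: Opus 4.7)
The plan is to prove Proposition~\ref{p: iterative 1} by induction on $q$ via a Nash-style convex integration scheme. Each step adds to the velocity a high-frequency Beltrami-type perturbation at spatial frequency $\lambda_{q+1}$ of amplitude of order $\delta_{q+1}^{1/2}$, and the temperature is updated by solving the linear parabolic transport problem with the new velocity. For the base case take $v_0\equiv 0$, $p_0\equiv 0$, and $\theta_0(t,x)=e^{-t}\theta^0(x)$; because $\theta^0\in X=\mathrm{span}\{\sin x_2,\cos x_2\}$ lies in the eigenspace of $-\triangle$ with eigenvalue $1$, this $\theta_0$ is an exact solution of the pure heat equation so \eqref{e:energy identity} holds automatically at $q=0$. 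An explicit trace-free symmetric $\mathring{R}_0$ with ${\rm div}\,\mathring{R}_0=-\theta_0 e_2$ is obtained by one integration in $x_2$, and all stated bounds hold for $a$ large.

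For the inductive step, given $(v_q,p_q,\theta_q,\mathring{R}_q)$, I would first mollify $v_q,p_q,\mathring{R}_q$ at a length scale $\ell_q$ chosen polynomially between $\lambda_q^{-1}$ and $\lambda_{q+1}^{-1}$ to obtain smooth approximants $v_\ell,p_\ell,\mathring{R}_\ell$ with cheap derivative estimates. Then set
\begin{align*}
w_{q+1}=w^{(p)}+w^{(c)},\qquad w^{(p)}(t,x)=\sum_{k\in\Lambda}a_k(t,x)\,B_k\,e^{i\lambda_{q+1}k\cdot x},
\end{align*}
where the $B_k$ are Beltrami vectors and the amplitudes $a_k(t,x)$ are chosen via the standard geometric lemma so that the low-frequency part of $w^{(p)}\otimes w^{(p)}$ equals $\rho(t)\,\Id-\mathring{R}_\ell$, with $\rho(t)$ tuned so that $\int_{{\rm T}^2}|v_{q+1}|^2\,dx$ matches $e(t)(1-\delta_{q+2})$ up to $\delta_{q+2}/4$; $w^{(c)}$ is the usual divergence corrector. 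Put $v_{q+1}=v_\ell+w_{q+1}$, and let $\theta_{q+1}$ be the unique smooth solution of
\begin{align*}
\partial_t\theta_{q+1}+v_{q+1}\cdot\nabla\theta_{q+1}-\triangle\theta_{q+1}=0,\qquad \theta_{q+1}(0,\cdot)=\theta^0.
\end{align*}
Because ${\rm div}\,v_{q+1}=0$, multiplying by $\theta_{q+1}$ gives \eqref{e:energy identity} for $\theta_{q+1}$ with no correction, and parabolic regularity yields the spatial and temporal H\"older regularity claimed in Theorem~\ref{t:main 1}.

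The new Reynolds stress $\mathring{R}_{q+1}$ is then produced by applying an inverse-divergence operator $\CR$ to the sum of four error terms: the transport error $(\partial_t+v_\ell\cdot\nabla)w_{q+1}$, the Nash error $w_{q+1}\cdot\nabla v_\ell$, the oscillation error arising from the high-frequency part of $w_{q+1}\otimes w_{q+1}$, and the new buoyancy error $(\theta_{q+1}-\theta_\ell)e_2$; $p_{q+1}-p_\ell$ is defined by projecting onto the gradient component. The first three contributions are estimated by the stationary-phase lemma of De Lellis--Sz\'ekelyhidi exactly as in \cite{BCDLI}, yielding \eqref{e:Reynold stress estimate}--\eqref{e:pressure difference estimate} and \eqref{e:time derivative estimate}. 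For the temperature, the difference $\tilde\theta=\theta_{q+1}-\theta_q$ solves
\begin{align*}
(\partial_t+v_{q+1}\cdot\nabla-\triangle)\tilde\theta=-w_{q+1}\cdot\nabla\theta_q,\qquad\tilde\theta(0,\cdot)=0,
\end{align*}
and the standard energy method, together with the bounds $\|w_{q+1}\|_0\lesssim\delta_{q+1}^{1/2}$ and the $L^2_tH^1_x$ control on $\theta_q$ coming from \eqref{e:energy identity}, yields \eqref{e:energy estimate on temperature}. The specific choice $b=(6+\gamma)/4$ and $c=4(5+\gamma)/(6+\gamma)$ is dictated by the requirement that all these errors be bounded by $\eta\delta_{q+2}$ for $a$ sufficiently large.

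The principal obstacle, absent in the purely Euler case, is controlling the buoyancy contribution $(\theta_{q+1}-\theta_\ell)e_2$ to $\mathring{R}_{q+1}$ while simultaneously maintaining the \emph{exact} identity \eqref{e:energy identity}. The latter forbids any convex-integration-style modification of $\theta_q$, so $\theta_{q+1}$ must be taken as the genuine parabolic solution driven by $v_{q+1}$; consequently $\theta_{q+1}-\theta_\ell$ is a low-frequency object and the inverse-divergence operator $\CR$ does not benefit from stationary-phase cancellation. Closing the induction therefore hinges on exploiting the parabolic smoothing of $\theta$ to absorb the very limited spatial regularity of $v_{q+1}$, and on tracking $\|\theta_{q+1}-\theta_\ell\|$ in enough norms (via maximal-regularity and Schauder estimates for the linear parabolic equation) to fit the buoyancy stress within the $\eta\delta_{q+2}$ budget. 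Once this is achieved, Theorem~\ref{t:main 1} follows by extracting a limit $(v,p,\theta)$ of $(v_q,p_q,\theta_q)$, with H\"older exponents determined by the summability of $\delta_{q+1}^{1/2}\lambda_{q+1}^\alpha$ in the telescoping series.
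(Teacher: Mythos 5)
Your overall plan matches the paper's scheme at the strategic level, and you correctly flag the buoyancy stress $(\theta_{q+1}-\theta_q)e_2$ as the genuinely new obstruction, but two specific points would make the argument, as written, break down.

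\textbf{The base case is wrong.} You take $v_0\equiv 0$ and $p_0\equiv 0$. With $v_0\equiv 0$ the energy constraint \eqref{e:energy difference estimate} at $q=0$ reads $\bigl|e(t)(1-\delta_1)\bigr|\le\tfrac{\delta_1}{4}e(t)$, i.e.\ $1-\delta_1\le\tfrac{\delta_1}{4}$, which is false for the small $\delta_1=a^{-b}$ forced by taking $a$ large. Moreover, with $p_0\equiv 0$ the Boussinesq--Reynolds system \eqref{e:boussinesq-reynold equation} requires $\operatorname{div}\mathring{R}_0=-\theta_0e_2$, and $\theta_0=e^{-t}\theta^0$ is $O(1)$ independent of $a$, so any such $\mathring{R}_0$ has $\|\mathring{R}_0\|_0\sim 1\not\le\eta\delta_1$. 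The paper avoids both failures by choosing $v_0$ to be a shear flow of $L^2$ norm $(1-\delta_1)e(t)$ oscillating at a large frequency $\lambda_0$, which makes $\partial_t v_0$ high frequency and yields a small $\mathring{R}_0$, and by setting $p_0=e^{-t}\int_0^{x_2}\theta^0$ so that $\nabla p_0$ exactly cancels $\theta_0 e_2$.

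\textbf{The mechanism you invoke for the buoyancy estimate would not close the induction.} You observe, correctly, that $\theta_{q+1}-\theta_q$ is a genuine low-frequency field so that $\mathcal{R}\bigl((\theta_{q+1}-\theta_q)e_2\bigr)$ cannot gain from stationary phase \emph{on the output side}, and then propose to rely on ``parabolic smoothing'' via maximal regularity and Schauder estimates to fit the term into the $\eta\delta_{q+2}$ budget. That does not work: Schauder for the difference equation
$(\partial_t+v_{q+1}\cdot\nabla-\triangle)(\theta_{q+1}-\theta_q)=-(v_{q+1}-v_q)\cdot\nabla\theta_q$
would give $\|\theta_{q+1}-\theta_q\|_{C^{2,\alpha}}\lesssim\|(v_{q+1}-v_q)\cdot\nabla\theta_q\|_{C^\alpha}\sim\delta_{q+1}^{1/2}\lambda_{q+1}^{\alpha}$, with no $\lambda_{q+1}^{-1}$ gain and hence far too large. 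The paper's actual mechanism is different and essential: the source $-(v_{q+1}-v_q)\cdot\nabla\theta_q$ has the explicit oscillatory form $\sum_{l,k}b_{kl}\,e^{i\lambda_{q+1}k^\perp\cdot x}$, and one proves a dedicated $L^2$-energy lemma for transport--diffusion equations with such forcing: repeated integration by parts against the oscillation $e^{i\lambda_{q+1}k^\perp\cdot x}$, absorbing the resulting gradients into the dissipative term, yields $\|\theta_{q+1}-\theta_q\|_{L^\infty_tL^2_x}\lesssim\delta_{q+1}^{1/2}\lambda_{q+1}^{-1}$ together with commensurate $\dot H^N$ bounds. This is then combined with the estimate $\|\mathcal{R}(f)\|_0\lesssim\|f\|_{\dot H^\varepsilon}$, so that $\|\mathcal{R}((\theta_{q+1}-\theta_q)e_2)\|_0\lesssim\delta_{q+1}^{1/2}\lambda_{q+1}^{\varepsilon-1}$, which does fit the budget. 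Without the oscillatory-source decay lemma and the $\dot H^\varepsilon\to C^0$ bound for $\mathcal{R}$, the buoyancy stress cannot be controlled, so this is the crucial missing idea.

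Minor differences worth noting: the paper sets $v_{q+1}=v_q+w$ (not $v_\ell+w$) and absorbs $v_q-v_\ell$ into explicit error terms $R^5,R^6$; it also uses a time partition of unity $\chi_l$ with alternating direction families $\Lambda_0,\Lambda_1$ and flow-advected phases $\Phi_l$, which your one-line ansatz for $w^{(p)}$ omits but which is standard. Finally, the amplitude $\rho$ is calibrated to $e(t)(1-\delta_{q+2})$ and checked against a budget $\tfrac{\delta_{q+2}}{4}e(t)$, not $\delta_{q+2}/4$; this is cosmetic but should be stated consistently with \eqref{e:energy difference estimate}.
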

We will prove Proposition \ref{p: iterative 1} in the subsequent sections. Here we first give a proof of Theorem \ref{t:main 1} by using this proposition.
\begin{proof}[Proof of Theorem 1.1]
From (\ref{e:Reynold stress estimate})-(\ref{e:energy estimate on temperature}), we know that $(v_{n},~p_{n},~\mathring{R}_{n})$ are Cauchy sequences in $C([0,1]\times{\rm T}^2)$, and $\theta_n$ is a Cauchy sequence in $L^\infty(0,1; L^2({\rm T}^2))\cap L^2(0,1;H^1({\rm T}^2))$, therefore there exist
\begin{align}
    (v, p)\in C([0,1]\times{\rm T}^2), \quad \theta \in L^\infty(0,1;L^2({\rm T}^2))\cap L^2(0,1; H^1({\rm T}^2))\nonumber
\end{align}
such that
\begin{align}
   (v_{n}, p_n)\rightarrow (v,p) &\quad {\rm in} \quad C([0,1]\times{\rm T}^2),\nonumber\\
    \theta_{n}\rightarrow \theta &\quad {\rm in} \quad L^\infty(0,1;L^2({\rm T}^2))\cap L^2(0,1; H^1({\rm T}^2)), \nonumber\\
   \mathring{R_{n}}\rightarrow 0 &\quad {\rm in} \quad C([0,1]\times{\rm T}^2)\nonumber
\end{align}
as $n \rightarrow \infty$.\\
Passing into limit in (\ref{e:boussinesq-reynold equation}), we conclude that $(v, p, \theta)$ solve (\ref{e:boussinesq equation}) in the sense of distribution.
Moreover, (\ref{e:energy difference estimate}) implies
\begin{align}
e(t)=\int_{{\rm T}^2}|v|^{2}(t,x)dx, \qquad \forall t\in[0,1].\nonumber
\end{align}
From (\ref{e:velocity difference estimate}), (\ref{e:pressure difference estimate}) and interpolation, we conclude
\begin{align}
\|v_{q+1}-v_q\|_\alpha \leq \|v_{q+1}-v_q\|_0^{1-\alpha}\|v_{q+1}-v_q\|_1^{\alpha}\leq M\delta_{q+1}^{\frac12}\lambda_{q+1}^\alpha\leq 2M a^{(-\frac12+\alpha bc)b^{q+1}},\nonumber\\[3pt]
\|p_{q+1}-p_q\|_\alpha \leq \|p_{q+1}-p_q\|_0^{1-\alpha}\|p_{q+1}-p_q\|_1^{\alpha}\leq M^2\delta_{q+1}\lambda_{q+1}^\alpha\leq 2M a^{(-1+\alpha bc)b^{q+1}}.\nonumber
\end{align}
Thus, for any small $\gamma> 0$ and every $\alpha< \frac{1}{2bc}=\frac{1}{2(5+\gamma)}$, $v_q$ is a Cauchy sequence in $C_{t,x}^\alpha$ and $p_q$ is a Cauchy sequence in $C_{t,x}^{2\alpha}$. Hence, for every $\alpha< \frac{1}{10}$, there eixts $(v, p, \theta)$  with $v\in C^\alpha_{t,x},~ p\in C^{2\alpha}_{t,x}$ and they solve the Boussinesq equation. By the Schauder estimate of linear parabolic equations, we deduce that $\theta\in C^{1,\frac{\alpha}{2}}_tC^{2,\alpha}_{x}$.

Furthermore, by (\ref{e:energy identity}), we deduce that the temperature $\theta$ satisfies the energy equality: for every $t\in [0, 1]$
\beno
\|\theta(t,\cdot)\|^2_{L^2}+2\int_0^t \|\nabla\theta(s,\cdot)\|^2_{L^2}=\|\theta^0(\cdot)\|^2_{L^2}.
\eeno
This completes the proof of Theorem \ref{t:main 1}.
\end{proof}

\subsection{Outline of the proof of Propositions \ref{p: iterative 1}}
The rest of this paper will be dedicated to prove Proposition \ref{p: iterative 1}. We perform a inductive procedure, and construct
 $v_{q+1}$ from $v_q$ by adding
some perturbations as follows:
 \begin{align}
 v_{q+1}=&v_q+w_o+w_c:=v_q+w,\nonumber
 \end{align}
where $w_o, w_c$ are smooth functions given by explicit formulas which depend on $(e(t), v_q, \mathring{R}_q)$.
After the construction of the new velocity $v_{q+1}$, we construct the new temperature $\theta_{q+1}$ by solving the following transport-diffusion equation: there exists a $\theta_{q+1}\in C^\infty([0, 1]\times {\rm T}^2, R)$ which solves
\begin{equation}\label{e:transport-diffusion equation}
\begin{cases}
\partial_t \theta_{q+1}+ v_{q+1}\cdot\nabla  \theta_{q+1} - \triangle  \theta_{q+1}=0,\\[3pt]
\theta_{q+1}|_{t=0}=\theta^0,
\end{cases}
\end{equation}
where $\theta^0$ is the function appeared in Proposition \ref{p: iterative 1}.
After the construction of $v_{q+1}, \theta_{q+1}$, we mainly focus on finding functions $\mathring{R}_{q+1}, p_{q+1}$ with the desired estimates and solving the system (\ref{e:boussinesq-reynold equation}).\\

The rest of paper is organized as follows. In Section 3, we do some preliminaries. We introduce the Geometric Lemma in \cite{CDL2}, stationary solutions of 2d Euler equation, anti-divergence operator and estimates of transport-diffusion equation with highly oscillatory forces.
In Section 4, we first define the new velocity $v_{q+1}$ by constructing velocity perturbations $w_o, w_c$, and then construct the new temperature $\theta_{q+1}$ by solving the transport-diffusion equation. In Section 5 and Section 6, we establish various estimates for the perturbation. Finally, in Section 7, we give a proof of Proposition \ref{p: iterative 1} by using the estimate which was established in Section 5 and 6.

\setcounter{equation}{0}

\section{Some preliminaries}

We first recall the following stationary solution for the 2d Euler equation which is the building block in our iterative scheme.
\subsection{Stationary flows in 2D}
\begin{Proposition}\label{Prop:Beltrami-2D}
Let $\Lambda$ be a given finite symmetric subset of $S^1\cap Q^2$. Then for any choice of coefficients $a_{k}\in {\rm C}$ with $\overline{a_k}=a_{-k}$, the vector field
\beno
W(x)=\sum_{k\in \Lambda}a_kik^{\bot}e^{i k\cdot x}, \quad \Psi(x)=\sum_{k\in \Lambda}a_ke^{i k\cdot x}
\eeno
is real-valued and satisfies
\begin{align}\label{i:stationary solution}
{\rm div}(W\otimes W)=\nabla\Big(\frac{|W|^2}{2}+\frac{\Psi^2}{2}\Big), \quad W(x)=\nabla^{\bot}\Psi(x).
\end{align}
Here and throughout the paper, we denote $k^{\perp}=(-k_2, k_1)$ if $k=( k_1, k_2)$, and denote $\nabla^\bot=(-\partial_2, \partial_1)$.
Furthermore,
\beno
\big<W\otimes W\big>:=\fint_{{\rm T}^2}W\otimes W(x)dx=\sum_{k\in \Lambda}|a_{k}|^2({\rm Id}-k\otimes k).
\eeno
\end{Proposition}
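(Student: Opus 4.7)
The plan is to verify the four assertions of the proposition in order, all by direct Fourier computation, exploiting the symmetry of $\Lambda$, the unit-length assumption $\Lambda\subset S^1$, and the explicit formulas for $W$ and $\Psi$.

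First, the real-valuedness of $\Psi$ and $W$ follows at once by pairing $k$ with $-k$: the symmetry of $\Lambda$ together with $\overline{a_k}=a_{-k}$ yields $\overline{a_k e^{ik\cdot x}} = a_{-k}e^{-ik\cdot x}$, and since $(-k)^\perp=-k^\perp$, also $\overline{a_k\, ik^\perp e^{ik\cdot x}} = a_{-k}\, i(-k)^\perp e^{-ik\cdot x}$, so both sums equal their own complex conjugates. The identity $W=\nabla^\perp\Psi$ is immediate from the termwise relation $\nabla^\perp e^{ik\cdot x}=ik^\perp e^{ik\cdot x}$.

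Second, and this is the main content, I would derive the Euler-type identity as follows. Since every $k\in\Lambda$ satisfies $|k|=1$, we obtain $\Delta\Psi=-\sum_{k\in\Lambda}a_k e^{ik\cdot x}=-\Psi$. Because $W=\nabla^\perp\Psi$ is divergence free, ${\rm div}(W\otimes W)=(W\cdot\nabla)W$, and the standard 2D vector identity gives
\beno
(W\cdot\nabla)W = \nabla\Bigl(\frac{|W|^2}{2}\Bigr) + \omega\, W^\perp,
\eeno
where $\omega = \partial_1 W_2-\partial_2 W_1 = \Delta\Psi=-\Psi$ is the vorticity. From $W=\nabla^\perp\Psi$ one reads off $W^\perp = -\nabla\Psi$, so $\omega W^\perp = \Psi\,\nabla\Psi = \nabla(\Psi^2/2)$, and combining with the previous line yields the desired identity. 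This step is the one whose sign conventions demand the most care; every other computation is routine.

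Finally, for the averaged tensor, I would expand the product and use the orthogonality relation $\fint_{{\rm T}^2} e^{i(k+l)\cdot x}\,dx=\delta_{k+l,0}$ to reduce the double sum to
\beno
\langle W\otimes W\rangle = \sum_{k\in\Lambda} a_k a_{-k}\, k^\perp\otimes k^\perp = \sum_{k\in\Lambda}|a_k|^2\, k^\perp\otimes k^\perp,
\eeno
where the second equality again uses $\overline{a_k}=a_{-k}$. The claim then follows from the elementary linear-algebra fact that for any unit vector $k\in S^1$ one has $k\otimes k + k^\perp\otimes k^\perp = {\rm Id}$, since $\{k,k^\perp\}$ is an orthonormal basis of $R^2$. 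There is no serious obstacle in this proof; the whole argument is mechanical once the unit-length hypothesis is used to identify $\Delta\Psi$ with $-\Psi$.
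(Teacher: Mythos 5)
Your proof is correct, but it takes a genuinely different route from the paper's. The paper first establishes an auxiliary algebraic lemma computing ${\rm div}\bigl(\vec f_1\otimes\vec f_2+\vec f_2\otimes\vec f_1\bigr)$ for two Fourier modes $\vec f_j$ of the same frequency magnitude, then expands $W\otimes W$, $|W|^2$ and $\Psi^2$ as double sums over $k,k'\in\Lambda$ and matches coefficients. You instead pass through the classical 2D vorticity--stream function identity
\begin{equation*}
(W\cdot\nabla)W=\nabla\Bigl(\tfrac{|W|^2}{2}\Bigr)+\omega\,W^\perp,\qquad \omega=\partial_1W_2-\partial_2W_1,
\end{equation*}
and use ${\rm div}\,W=0$ to identify ${\rm div}(W\otimes W)$ with $(W\cdot\nabla)W$, then observe that $\omega=\Delta\Psi=-\Psi$ because every $k\in\Lambda$ lies on $S^1$ and that $W^\perp=-\nabla\Psi$, so $\omega W^\perp=\nabla(\Psi^2/2)$. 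All the sign bookkeeping here checks out against the paper's conventions $k^\perp=(-k_2,k_1)$, $\nabla^\perp=(-\partial_2,\partial_1)$. Your route is shorter and more conceptual: it makes transparent that these are stationary 2D Euler flows precisely because the stream function is a Laplace eigenfunction (so vorticity is proportional to the stream function), whereas the paper's route is purely computational but entirely self-contained in Fourier space and does not invoke the vector identity. Your treatment of the remaining claims (real-valuedness by $\pm k$ pairing, $W=\nabla^\perp\Psi$ termwise, the average via orthogonality plus $k\otimes k+k^\perp\otimes k^\perp={\rm Id}$) agrees with what the paper dismisses as obvious.
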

The proof of this proposition can be found in \cite{CHO}, and for completeness, we give a direct proof here. We first prove the following lemma.
\begin{Lemma}
Let $\vec{f}_1(x)=\binom{-b}{a}e^{i(a,b)\cdot x}, \quad \vec{f}_2(x)=\binom{-d}{c}e^{i(c,d)\cdot x}$ with $a^2+b^2=c^2+d^2$. There holds
\begin{align}
{\rm div}\big(\vec{f}_1(x)\otimes\vec{f}_2(x)+\vec{f}_2(x)\otimes \vec{f}_1(x)\big)=\nabla \Big((ac+bd-a^2-b^2)e^{i(a+c,b+d)\cdot x}\Big).\nonumber
\end{align}
Here and below, we denote
\begin{align}
\binom{a}{b}\otimes\binom{c}{d}=\binom{a}{b}(c\quad d)=
\left(\begin{array}{ccc}
ac & ad\\
bc & bd
\end{array}\right).\nonumber
\end{align}
\end{Lemma}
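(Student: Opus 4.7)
The plan is to verify the lemma by a direct, elementary calculation, exploiting the fact that the two plane waves multiply to produce a single oscillatory factor $e^{i(a+c,b+d)\cdot x}$. First I would expand the symmetric tensor sum explicitly as
\[
\vec{f}_1(x)\otimes \vec{f}_2(x)+\vec{f}_2(x)\otimes \vec{f}_1(x)
=\begin{pmatrix} 2bd & -(ad+bc) \\ -(ad+bc) & 2ac \end{pmatrix} e^{i(a+c,b+d)\cdot x}.
\]
With this form, taking the divergence reduces to differentiating a single scalar exponential in $x_1$ and $x_2$, so each component of the divergence becomes a polynomial in $a,b,c,d$ multiplied by $i\,e^{i(a+c,b+d)\cdot x}$.

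Next I would compute the right-hand side. Since the coefficient $ac+bd-a^2-b^2$ is constant in $x$, its gradient is simply
\[
i(ac+bd-a^2-b^2)\,(a+c,\,b+d)\,e^{i(a+c,b+d)\cdot x},
\]
so after cancelling the common factor $i\,e^{i(a+c,b+d)\cdot x}$ the claimed identity collapses to two scalar polynomial identities in $a,b,c,d$, one for each coordinate.

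The main (and only) step of substance is the algebraic verification of these two polynomial identities. Expanding and collecting terms, the difference of the two sides of the first identity simplifies to a multiple of $a\bigl((a^2+b^2)-(c^2+d^2)\bigr)$, and the difference of the two sides of the second identity simplifies to a multiple of $b\bigl((a^2+b^2)-(c^2+d^2)\bigr)$; both vanish by the hypothesis $a^2+b^2=c^2+d^2$. Conceptually, the lemma is the two-mode version of the Beltrami-type relation \eqref{i:stationary solution} in Proposition \ref{Prop:Beltrami-2D}: the equal-length assumption on the two wave vectors plays exactly the role that $|k|=1$ does there, namely it is precisely what forces the quadratic interaction of two solenoidal plane waves to be a pure gradient.
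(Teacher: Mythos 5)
Your proposal is correct and follows essentially the same route as the paper: expand the symmetric tensor product as a constant $2\times2$ matrix times $e^{i(a+c,b+d)\cdot x}$, reduce the divergence to a matrix–vector product, and verify two scalar polynomial identities. The paper's own verification proceeds by adding cleverly chosen zero terms (e.g. $ac(a+c)-ac^2-a^2c$) so the result factors; your observation that the differences of the two sides are respectively $a\bigl[(a^2+b^2)-(c^2+d^2)\bigr]$ and $b\bigl[(a^2+b^2)-(c^2+d^2)\bigr]$ is the same computation reorganized, and arguably makes the role of the hypothesis $a^2+b^2=c^2+d^2$ more transparent.
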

\begin{proof}
A computation gives
\begin{align}
&{\rm div}\big(\vec{f}_1(x)\otimes\vec{f}_2(x)+\vec{f}_2(x)\otimes \vec{f}_1(x)\big)\nonumber\\[3pt]
=&{\rm div}\left(\binom{-b}{a}\otimes \binom{-d}{c}e^{i(a+c,b+d)\cdot x}+\binom{-d}{c}\otimes \binom{-b}{a}e^{i(a+c,b+d)\cdot x}\right)\nonumber\\
=&{\rm div}\Big[
\left(\begin{array}{ccc}
2bd & -bc-ad\\
-ad-bc & 2ac
\end{array}\right)e^{i(a+c,b+d)\cdot x}\Big]\nonumber\\
=&i\left(\begin{array}{ccc}
2bd & -bc-ad\\
-ad-bc & 2ac
\end{array}\right)\binom{a+c}{b+d}e^{i(a+c,b+d)\cdot x}\nonumber\\
=&i\binom{bd(a+c)-b^2c-ad^2}{ac(b+d)-ad^2-bc^2}e^{i(a+c,b+d)\cdot x}\nonumber\\[3pt]
=&i\binom{bd(a+c)+ac(a+c)-ac^2-ca^2-b^2c-ad^2}{ac(b+d)+bd(b+d)-bd^2-db^2-ad^2-bc^2}e^{i(a+c,b+d)\cdot x}\nonumber\\[3pt]
=&i\binom{(bd+ac-a^2-b^2)(a+c)}{(ac+bd-a^2-b^2)(b+d)}e^{i(a+c,b+d)\cdot x}\nonumber\\[3pt]
=&\nabla \Big((ac+bd-a^2-b^2)e^{i(a+c,b+d)\cdot x}\Big),\nonumber
\end{align}
where we used $a^2+b^2=c^2+d^2$ in the penultimate line.
\end{proof}

{\bf Proof of Proposition \ref{Prop:Beltrami-2D}}
\begin{proof}
We only prove the first identity in (\ref{i:stationary solution}). The others are obvious.
It's direct to obtain
\begin{align}
W(x)\otimes W(x)&=-\sum_{k,k' \in \Lambda}a_ka_{k'}k^{\bot}\otimes (k')^{\bot} e^{i (k+k')\cdot x}\nonumber\\
&=-\frac12\sum_{k,k' \in \Lambda}a_ka_{k'}(k^{\bot}\otimes (k')^{\bot}+(k')^\bot\otimes k^\bot)e^{i (k+k')\cdot x}, \nonumber\\
\frac{|W(x)|^2}{2}&=-\frac12\sum_{k,k' \in \Lambda}a_ka_{k'}k\cdot k' e^{i (k+k')\cdot x},\nonumber\\
\frac{\Psi^2(x)}{2}&=\frac12\sum_{k,k' \in \Lambda}a_ka_{k'} e^{i (k+k')\cdot x}.\nonumber
\end{align}
Thus, using the above lemma, we obtain
\begin{align}
{\rm div}\big(W(x)\otimes W(x)\big)&=-\frac12\sum_{k,k' \in \Lambda}a_ka_{k'}(k\cdot k'-1)\nabla e^{i (k+k')\cdot x}\nonumber\\
&=-\frac12\nabla \Big(\sum_{k,k' \in \Lambda}a_ka_{k'}(k\cdot k'-1) e^{i (k+k')\cdot x}\Big),\nonumber
\end{align}
\begin{align}
\nabla \Big(\frac{|W(x)|^2}{2}+\frac{\Psi^2(x)}{2}\Big)&=-\frac12\nabla \Big(\sum_{k,k' \in \Lambda}a_ka_{k'}(k\cdot k'-1) e^{i (k+k')\cdot x}\Big),\nonumber
\end{align}
and hence
\beno
{\rm div}(W\otimes W)=\nabla\Big(\frac{|W|^2}{2}+\frac{\Psi^2}{2}\Big).
\eeno
\end{proof}

\subsection{Geometric Lemma}

Let
\begin{align*}
\Lambda_0^+ = \Big\{ e_1,~~ \frac{3}{5}e_1 + \frac{4}{5}e_2, ~~\frac{3}{5}e_1 - \frac{4}{5}e_2  \Big\}, \quad \Lambda_0^- = - \Lambda_0^+, \quad \Lambda_0 = \Lambda_0^+ \cup \Lambda_0^-,
\end{align*}
and $\Lambda_1$ be given by the rotation of $\Lambda_0$ counter clock-wise by $\pi/2$:
\begin{align*}
\Lambda_1^+ = \Big\{ e_2, ~~\frac{3}{5}e_2 + \frac{4}{5}e_1, ~~\frac{3}{5}e_2 - \frac{4}{5}e_1  \Big\}, \quad \Lambda_1^- = - \Lambda_1^+, \quad \Lambda_1 = \Lambda_1^+ \cup \Lambda_1^-.
\end{align*}
Clearly $\Lambda_0, \Lambda_1 \subseteq  Q^2\cap S^1$ and we have the representation
\begin{align*}
\frac{25}{32}\Big(\Big(\frac{3}{5}e_1 + \frac{4}{5}e_2\Big)\otimes& \Big(\frac{3}{5}e_1 +\frac{4}{5}e_2\Big)\nonumber\\
&+\Big( \frac{3}{5}e_1 - \frac{4}{5}e_2\Big)\otimes  \Big(\frac{3}{5}e_1 - \frac{4}{5}e_2\Big)\Big)
 + \frac{7}{16}e_1\otimes e_1 = \mathrm{Id}.
\end{align*}
Moreover, notice that $\mathcal{S}^{2\times 2}$($2\times2$ symmetric matric) is a 3-dimensional linear space, thus by the above representation and uniqueness, we know that such representation holds for $2 \times 2$ symmetric matrices near $\mathrm{Id}$.
\begin{Lemma}[Geometric Lemma]\label{l:geometric lemma}
There exists $\varepsilon_{0}>0$ and smooth positive functions $\gamma_k$:
 \begin{align*}
    \gamma_k\in C^{\infty}(B_{\varepsilon_{0}}({\rm Id})), k \in \Lambda_0^+ 
\end{align*}
such that for every $2 \times 2$ symmetric matrix $R \in B_{\varepsilon_{0}}({\rm Id})$, we have
  $$R = 2\sum_{k \in \Lambda_0^+} \gamma^2_k(R)k \otimes k.$$
\end{Lemma}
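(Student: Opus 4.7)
My plan is to reduce the statement to linear algebra together with a continuity argument, exploiting the observation (already announced in the paragraph preceding the lemma) that the three rank-one symmetric matrices $\{k\otimes k:k\in\Lambda_0^+\}$ are linearly independent and thus form a basis of the three-dimensional space $\mathcal{S}^{2\times 2}$. The first step will be to make that independence explicit: the sum and difference of $(\tfrac{3}{5}e_1+\tfrac{4}{5}e_2)\otimes(\tfrac{3}{5}e_1+\tfrac{4}{5}e_2)$ and $(\tfrac{3}{5}e_1-\tfrac{4}{5}e_2)\otimes(\tfrac{3}{5}e_1-\tfrac{4}{5}e_2)$ produce nonzero multiples of the diagonal matrix with entries $(9,16)$ and of $e_1\otimes e_2+e_2\otimes e_1$; together with $e_1\otimes e_1$ these three symmetric matrices manifestly span $\mathcal{S}^{2\times 2}$.

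Consequently, the linear map $\Phi:\mathbb{R}^3\to\mathcal{S}^{2\times 2}$ defined by
\[
\Phi\bigl((\mu_k)_{k\in\Lambda_0^+}\bigr)=2\sum_{k\in\Lambda_0^+}\mu_k\,k\otimes k
\]
is a linear isomorphism. For each symmetric $R$, I would set $(\mu_k(R))_{k\in\Lambda_0^+}:=\Phi^{-1}(R)$, so each $\mu_k:\mathcal{S}^{2\times 2}\to\mathbb{R}$ is a linear functional of the entries of $R$ and therefore $C^\infty$, and by construction $R=2\sum_{k\in\Lambda_0^+}\mu_k(R)\,k\otimes k$ holds for every $R\in\mathcal{S}^{2\times 2}$.

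To finish I would use the explicit identity for $\mathrm{Id}$ recorded before the lemma to read off $\mu_{e_1}(\mathrm{Id})=\tfrac{7}{32}$ and $\mu_k(\mathrm{Id})=\tfrac{25}{64}$ for the other two $k\in\Lambda_0^+$; all three are strictly positive. By continuity, there exists $\varepsilon_0>0$ with $\mu_k(R)>0$ for every $R\in B_{\varepsilon_0}(\mathrm{Id})$ and every $k\in\Lambda_0^+$, and on this ball I define $\gamma_k(R):=\sqrt{\mu_k(R)}$, which is smooth and strictly positive because the square root is smooth on $(0,\infty)$. The only place any care is needed is the linear-independence verification, which is essentially a $2\times 2$ matrix computation; once $\Phi$ is known to be invertible, positivity at $\mathrm{Id}$ and continuity close the argument, so I do not expect a genuine obstacle here.
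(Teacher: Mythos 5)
Your proposal is correct and follows essentially the same route as the paper: the paper's justification (the paragraph preceding the lemma) also rests on the explicit representation of $\mathrm{Id}$, the observation that $\{k\otimes k:k\in\Lambda_0^+\}$ spans the three-dimensional space $\mathcal{S}^{2\times 2}$ so the coefficients are uniquely (and hence smoothly, by linearity) determined, and continuity to keep the coefficients positive near $\mathrm{Id}$. You have merely spelled out the linear-algebra details and the square-root step that the paper leaves implicit.
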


\begin{Remark}\label{r:remark about universal constant}
By rotational symmetry, Geometric Lemma \ref{l:geometric lemma} also holds for $k \in \Lambda_1^+$.
It is  convenient to introduce a small geometric constant $c_0\in (0, 1)$ such that
\beno
|k+k'|\geq 2c_0
\eeno
for all $k, k' \in \Lambda_0 \cap \Lambda_1, k\neq -k'$. Moreover, for $k\in \Lambda_l^{-}$ with $l=0,1$, we set $\gamma_k:=\gamma_{-k}$.
\end{Remark}

\subsection{Anti-divergence operator}
We recall the anti-divergence operator in this subsection.
\begin{Lemma}{(Anti-divergence operator)}\label{l:anti-divergence operator}
There exists an operator $\mathcal{R}$ satisfying the following property:
\begin{itemize}
  \item For any $v\in C^\infty({\rm T}^2; R^2)$, $\mathcal{R}v(x)$ is a symmetric trace-free matrix for each $x\in {\rm T}^2$ and
         ${\rm div} \mathcal{R}v(x)= v(x)-\fint_{{\rm T}^2} v(x)dx.$
  \item The following estimates hold: for any $a\in C^\infty({\rm T}^2)$ and any $m\in {\rm N}_+, ~ \alpha\in (0, 1)$, there holds
      \begin{align}\label{e:oscillatory estimate}
      \big\|\mathcal{R}\big(a(x)e^{i\lambda k\cdot x}\big)\big\|_{\alpha}\leq C(m,\alpha)\Big(\frac{\|a\|_0}{\lambda^{1-\alpha}}+\frac{\|\nabla^m a\|_0}{\lambda^{m-\alpha}}+ \frac{\|\nabla^{m}a\|_\alpha}{\lambda^{m}}\Big).
      \end{align}
\end{itemize}
\end{Lemma}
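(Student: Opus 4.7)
The plan is to construct $\mathcal{R}$ explicitly via inverse Laplacians, verify the algebraic properties directly, and then obtain the oscillatory estimate by iterated integration by parts in the direction $k$. On the torus ${\rm T}^2$, let $\Delta^{-1}$ denote the inverse Laplacian on mean-zero scalars (the Fourier multiplier $-|k|^{-2}$ on nonzero modes, extended by zero on the mean). For a smooth vector field $v$, write $\widetilde v := v - \fint_{{\rm T}^2}v$ and set
\[
(\mathcal{R}v)_{ij}(x) := \partial_i\Delta^{-1}\widetilde v_j + \partial_j\Delta^{-1}\widetilde v_i - \delta_{ij}\Delta^{-1}{\rm div}\,\widetilde v,\qquad i,j\in\{1,2\}.
\]
Symmetry is built in; the trace $2\Delta^{-1}{\rm div}\,\widetilde v - 2\Delta^{-1}{\rm div}\,\widetilde v$ vanishes; and a direct computation gives $\partial_i(\mathcal{R}v)_{ij} = \Delta\Delta^{-1}\widetilde v_j + \partial_j\Delta^{-1}{\rm div}\,\widetilde v - \partial_j\Delta^{-1}{\rm div}\,\widetilde v = \widetilde v_j$. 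This settles the first bullet point.

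For the oscillatory estimate, the key algebraic identity, valid when $|k|=1$, is the one-dimensional integration by parts
\[
fe^{i\lambda k\cdot x} = \frac{1}{i\lambda}k\cdot\nabla\bigl(fe^{i\lambda k\cdot x}\bigr) - \frac{1}{i\lambda}(k\cdot\nabla f)e^{i\lambda k\cdot x}.
\]
Iterating this $m$ times on the lower-order piece and using $[k\cdot\nabla,\Delta^{-1}]=0$ yields
\[
\Delta^{-1}(fe^{i\lambda k\cdot x}) = \sum_{j=0}^{m-1}\frac{(-1)^j}{(i\lambda)^{j+1}}\,k\cdot\nabla\Delta^{-1}\!\bigl(((k\cdot\nabla)^j f)e^{i\lambda k\cdot x}\bigr) + \frac{(-1)^m}{(i\lambda)^m}\Delta^{-1}\!\bigl(((k\cdot\nabla)^m f)e^{i\lambda k\cdot x}\bigr).
\]
Composing with $\partial_i$ from the definition of $\mathcal{R}$, the operator $\partial_i k\cdot\nabla\Delta^{-1} = k_\ell\partial_i\partial_\ell\Delta^{-1}$ is an order-$0$ Calder\'on--Zygmund operator bounded on $C^\alpha$ for $\alpha\in(0,1)$, while $\partial_i\Delta^{-1}$ is of order $-1$ and thus maps $C^\alpha\to C^{1,\alpha}\hookrightarrow C^\alpha$ continuously. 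Combining these Schauder bounds with the product estimate $\|ge^{i\lambda k\cdot x}\|_\alpha \leq C(\lambda^\alpha\|g\|_0 + \|g\|_\alpha)$ and $\|(k\cdot\nabla)^j a\|_0\leq\|\nabla^j a\|_0$ controls each term of the expansion by a combination of $\lambda^{-(j+1-\alpha)}\|\nabla^j a\|_0$ and $\lambda^{-(j+1)}\|\nabla^j a\|_\alpha$; the $j=0$ explicit piece gives the first claimed term $\lambda^{-(1-\alpha)}\|a\|_0$, and the $j=m$ remainder gives the last two, $\lambda^{-(m-\alpha)}\|\nabla^m a\|_0$ and $\lambda^{-m}\|\nabla^m a\|_\alpha$.

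The main obstacle is the bookkeeping required to absorb the intermediate explicit terms ($1\leq j\leq m-1$) together with the spurious $\lambda^{-1}\|a\|_\alpha$ contribution from $j=0$ into the three target pieces. This is accomplished by Gagliardo--Nirenberg interpolation $\|\nabla^j a\|_0 \leq C\|a\|_0^{1-j/m}\|\nabla^m a\|_0^{j/m}$ (and its natural H\"older-space analog for $\|\nabla^j a\|_\alpha$) combined with Young's inequality with conjugate exponents $(m/(m-j), m/j)$, which redistributes the $\lambda$-weights so that every intermediate contribution is bounded by $C(\lambda^{-(1-\alpha)}\|a\|_0 + \lambda^{-(m-\alpha)}\|\nabla^m a\|_0 + \lambda^{-m}\|\nabla^m a\|_\alpha)$, using $\lambda\geq 1$ to trade away any residual (strictly-better-than-required) negative powers of $\lambda$. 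Collecting constants yields the stated inequality with $C=C(m,\alpha)$.
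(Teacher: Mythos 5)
Your construction of $\mathcal{R}$ via $\partial_i\Delta^{-1}$ is identically the operator $\nabla u + (\nabla u)^T - (\mathrm{div}\,u)\mathrm{Id}$ with $\Delta u = v - \fint v$ that the paper writes down, and your algebraic verification of symmetry, trace-freeness, and the divergence identity is correct. The paper itself only states the operator and cites \cite{CHO} for the oscillatory bound; your iterated integration-by-parts identity for $\Delta^{-1}(fe^{i\lambda k\cdot x})$ (valid since $|k|=1$), the Schauder bounds for $\partial_i\partial_\ell\Delta^{-1}$ (order $0$) and $\partial_i\Delta^{-1}$ (order $-1$), and the interpolation/Young step to absorb the intermediate $1\le j\le m-1$ terms and the spurious $\lambda^{-1}\|a\|_\alpha$ piece are exactly the standard argument in that reference, so the approach is essentially the same.
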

\begin{proof}
Let $u\in C_0^\infty({\rm T}^2)$ be a solution to
\beno
\triangle u=v-\fint_{{\rm T}^2}v(x)dx,
\eeno
where $C_0^\infty({\rm T}^2)=\{f\in C^\infty({\rm T}^2):\fint_{{\rm T}^2} f(x)dx=0\}.$
Then set
\beno
\mathcal{R}v(x):= \nabla u+ (\nabla u)^{{\rm T}} -({\rm div}u) {\rm Id}.
\eeno
Then $\mathcal{R}$ satisfies the above property. The detail can be found in \cite{CHO}, and we omit it here.
\end{proof}

Moreover, $\mathcal{R}$ satisfies the following property:
\begin{Lemma}
For any $s> 0$, there holds
\begin{align}\label{e:L infinity L2 bound for anti-divergence}
\|\mathcal{R}(v)\|_0\leq C \|v\|_{\dot{H}^s}.
\end{align}
Here and below,
\begin{align}
\|v\|_{\dot{H}^s}^2=\sum_{k\in Z^2, k\neq 0} |v_k|^2|k|^{2s}. \nonumber
\end{align}
\end{Lemma}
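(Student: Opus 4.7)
The plan is to reduce the bound on $\|\mathcal R(v)\|_0$ to a bound on $\|\nabla u\|_0$ where $u$ is the mean-zero solution of the Poisson equation $\triangle u = v - \fint_{\mathrm T^2} v\,dx$, and then to exploit the explicit Fourier representation of $\nabla u$ together with Cauchy--Schwarz.

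First, by the very definition $\mathcal R(v)=\nabla u+(\nabla u)^{\mathrm T}-(\mathrm{div}\,u)\mathrm{Id}$ given in Lemma \ref{l:anti-divergence operator}, we immediately have the pointwise bound $|\mathcal R(v)(x)|\le C|\nabla u(x)|$, so it suffices to show $\|\nabla u\|_0\le C\|v\|_{\dot H^s}$. Writing $v=\sum_{k\in\mathbb Z^2}v_k e^{ik\cdot x}$, the mean-zero solution of $\triangle u=v-v_0$ has Fourier coefficients $u_k=-v_k/|k|^2$ for $k\neq 0$, so the Fourier coefficients of $\nabla u$ are $ikv_k\,(-1/|k|^2)$ and hence have modulus $|v_k|/|k|$ for $k\neq 0$.

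Next, by the triangle inequality and Cauchy--Schwarz,
\begin{align*}
\|\nabla u\|_0\;\le\;\sum_{k\neq 0}\frac{|v_k|}{|k|}\;=\;\sum_{k\neq 0}\bigl(|v_k||k|^s\bigr)\,|k|^{-(s+1)}\;\le\;\Bigl(\sum_{k\neq 0}|v_k|^2|k|^{2s}\Bigr)^{1/2}\Bigl(\sum_{k\neq 0}|k|^{-2(s+1)}\Bigr)^{1/2}.
\end{align*}
The first factor is precisely $\|v\|_{\dot H^s}$. The second factor is a convergent series in two dimensions precisely when $2(s+1)>2$, i.e. when $s>0$; in that case its value is a finite constant $C(s)$, yielding $\|\nabla u\|_0\le C(s)\|v\|_{\dot H^s}$ and therefore the claim.

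I do not expect a serious obstacle here: the only point to be a little careful about is the handling of the zero mode (the mean of $v$), which is harmless because $\mathcal R$ acts on $v-\fint v$ and so only the $k\neq 0$ modes appear, and the dimensional check $s>0\Leftrightarrow \sum_{k\neq 0}|k|^{-2(s+1)}<\infty$ in $\mathrm T^2$. Since Lemma \ref{l:anti-divergence operator} already gives the structural formula for $\mathcal R$, no further PDE input is required.
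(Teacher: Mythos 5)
Your proposal is correct and follows essentially the same route as the paper: expand in Fourier series, note that the nonzero modes of $\nabla u$ (and hence of $\mathcal R(v)$) have modulus $|v_k|/|k|$, then apply Cauchy--Schwarz and the convergence of $\sum_{k\neq 0}|k|^{-2(1+s)}$ for $s>0$. The only cosmetic difference is that you bound $\mathcal R(v)$ pointwise by $|\nabla u|$ before going to Fourier, whereas the paper writes out the Fourier expansion of $\mathcal R(v)$ itself; the computation is the same.
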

\begin{proof}
In fact, we can give a explicit formula for $\mathcal{R}$ by the Fourier series expansion. Let
\begin{align}
v(x)=\sum_{k\in Z^2}v_k e^{ik\cdot x}, \quad x\in {\rm T}^2 \nonumber
\end{align}
where $v_k\in {\rm C}^2$ is the Fourier coefficient of the vector function $v(x)$. Then
\begin{align}
v(x)-\fint_{{\rm T}^2}v(x)dx=\sum_{k\in Z^2, k\neq 0}v_k e^{ik\cdot x}, \quad x\in {\rm T}^2. \nonumber
\end{align}
Thus, due to $u\in C_0^\infty({\rm T}^2)$ and $\triangle u=v$, we deduce
\begin{align}
u(x)=\sum_{k\in Z^2, k\neq 0}\frac{-v_k}{|k|^2} e^{ik\cdot x},\quad x\in {\rm T}^2.  \nonumber
\end{align}
Thus, there hold
\begin{align}
\nabla u(x)=&\sum_{k\in Z^2, k\neq 0}\frac{-iv_k\otimes k}{|k|^2} e^{ik\cdot x}, \nonumber\\
(\nabla u)^{\rm T}(x)=&\sum_{k\in Z^2, k\neq 0}\frac{-ik\otimes v_k}{|k|^2} e^{ik\cdot x}, \nonumber\\
{\rm div} u(x)=&\sum_{k\in Z^2, k\neq 0}\frac{-iv_k\cdot k}{|k|^2} e^{ik\cdot x}. \nonumber
\end{align}
Summing them together, we obtain
\begin{align}
\mathcal{R}(v)(x)=\sum_{k\in Z^2, k\neq 0}\Big(\frac{-iv_k\otimes k}{|k|^2}+\frac{-ik\otimes v_k}{|k|^2}+\frac{iv_k\cdot k}{|k|^2}{\rm Id} \Big)e^{ik\cdot x}, \nonumber
\end{align}
hence for any $s>0$, there hold
 \begin{align}
 \|\mathcal{R}(v)\|_0\leq C\sum_{k\in Z^2, k\neq 0}\frac{|v_k|}{|k|}\leq& C\Big(\sum_{k\in Z^2, k\neq 0}|v_k|^2|k|^{2s}\Big)^{\frac12}\Big(\sum_{k\in Z^2, k\neq 0}\frac{1}{|k|^{2(1+s)}}\Big)^{\frac12}\leq C \|v\|_{\dot{H}^s},\nonumber
 \end{align}
 where we used
 \begin{align}
 \sum_{k\in Z^2, k\neq 0}\frac{1}{|k|^{2(1+s)}} \nonumber
 \end{align}
 is convergent for any $s>0$. In fact,
\begin{align}
 \sum_{k\in Z^2, k\neq 0}\frac{1}{|k|^{2(1+s)}}&\leq 4\sum_{n=1}^\infty\sum_{k_1+k_2=n, k_1, k_2\geq 0}\frac{1}{(k_1^2+k_2^2)^{1+s}}\nonumber\\
 &\leq4\sum_{n=1}^\infty\Big(\frac{1}{n^{2(1+s)}}+\frac{1}{(1+(n-1)^2)^{(1+s)}}+\cdot\cdot\cdot+\frac{1}{n^{2(1+s)}}\Big)\nonumber\\
 &\leq C\sum_{n=1}^\infty\frac{2^{(1+s)}(n+1)}{n^{2(1+s)}}\leq C_s< +\infty.\nonumber
 \end{align}
\end{proof}

\subsection{Transport-diffusion equation with oscillatory force} We consider the transport-diffusion equation with oscillatory force
\begin{align}\textcolor{blue}{\label{e:transport-diffusion equation with decay source'}}
\left\{
\begin{array}{ll}
\partial_t \theta+v\cdot \nabla \theta- \triangle \theta=a(t,x)\cos(\lambda k\cdot x), \quad x \in {\rm T}^2, \\[3pt]
{\rm div}v=0, \\[3pt]
\theta(0,x)=0,
\end{array}
\right.
\end{align}
where $k\in S^1\cap Q^2$ is a vector and $\lambda k\in Z^2, \lambda\neq 0$.

To simplify the formulas,  we introduce the following notation: for any $n\in {\rm N}_+$,
\begin{align}
\|(f_1,\cdot\cdot\cdot,f_n)\|_X:=\sum_{k=1}^n \|f_k\|_X, \nonumber
\end{align}
where $\|\cdot\|_X$ is a norm.
\begin{Lemma}
Let $\theta(t,x)$ be a solution of (\ref{e:transport-diffusion equation with decay source'}). Then there hold
\begin{align}\label{e:transport difusion estimate}
\|\theta\|_{L^\infty L^2_x}\leq& C(M)\frac{\| a \|_{L^\infty L^2}}{\lambda}
+ C(M)\frac{\|\nabla^{M+1}a\|_{L^\infty L^2}}{\lambda^{M+1}},\quad \forall M\geq 1, \nonumber\\
\|\nabla^N\theta\|_{L^\infty L^2}
	\leq&  C(N)\|\theta\|_{L^\infty L^2}\sum_{k=0}^{N-1}\|\nabla^k v\|_0^{\frac{N+1}{k+1}}\nonumber\\[3pt]
& \quad \quad \quad +C(N)\|(\nabla^{N-1}a, \lambda^{N-1}a)\|_{L^\infty L^2}, \quad N\geq 1.
\end{align}
\end{Lemma}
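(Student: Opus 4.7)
The first bound is obtained from an $L^2$ energy identity exploiting the oscillation. Multiplying the equation by $\theta$, integrating over ${\rm T}^2$, and using ${\rm div}\,v=0$ to cancel the transport term gives
\begin{align*}
\tfrac12\tfrac{d}{dt}\|\theta\|_{L^2}^2+\|\nabla\theta\|_{L^2}^2=\int_{{\rm T}^2}a\,\theta\,\cos(\lambda k\cdot x)\,dx.
\end{align*}
Since $|k|=1$, one has $\cos(\lambda k\cdot x)=\lambda^{-1}{\rm div}\bigl(k\sin(\lambda k\cdot x)\bigr)$; integrating by parts once produces two kinds of contributions, one with the derivative on $\theta$, which is absorbed into $\|\nabla\theta\|_{L^2}^2$ via Young's inequality, and one with the derivative on $a$, on which I iterate the same procedure. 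After $M+1$ iterations all intermediate pieces have been absorbed and the surviving terms are controlled by $\|a\|_{L^\infty L^2}^2/\lambda^2$ and $\|\nabla^{M+1}a\|_{L^\infty L^2}^2/\lambda^{2(M+1)}$; Gronwall and taking a square root then yields the stated bound. An equivalent route is to introduce $\theta_0=a\cos(\lambda k\cdot x)/\lambda^2$, observe that $(\partial_t+v\cdot\nabla-\Delta)\theta_0=a\cos(\lambda k\cdot x)+r$ with $r$ an oscillatory term of size $O(\lambda^{-1})$, and iterate this asymptotic expansion $M$ times, closing with a single energy estimate on the remainder.

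For the $\nabla^N\theta$ bound, apply $\nabla^\alpha$ with $|\alpha|=N$ to the equation and pair with $\nabla^\alpha\theta$ in $L^2$:
\begin{align*}
\tfrac12\tfrac{d}{dt}\|\nabla^\alpha\theta\|_{L^2}^2+\|\nabla^{\alpha+1}\theta\|_{L^2}^2=-\!\int[\nabla^\alpha,v\cdot\nabla]\theta\cdot\nabla^\alpha\theta\,dx+\int\nabla^\alpha\bigl(a\cos(\lambda k\cdot x)\bigr)\cdot\nabla^\alpha\theta\,dx.
\end{align*}
For the source, one integration by parts followed by Young's inequality produces the bound $C\|\nabla^{N-1}(a\cos(\lambda k\cdot x))\|_{L^2}^2$, and Leibniz gives $\|\nabla^{N-1}(a\cos(\lambda k\cdot x))\|_{L^2}^2\lesssim \|\nabla^{N-1}a\|_{L^2}^2+\lambda^{2(N-1)}\|a\|_{L^2}^2$, matching the stated source contribution. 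The commutator $[\nabla^\alpha,v\cdot\nabla]\theta$ expands schematically as $\sum_{j=1}^N \nabla^j v\cdot\nabla^{N+1-j}\theta$; each term, paired with $\nabla^N\theta$, gives a product $\|\nabla^j v\|_0\|\nabla^{N+1-j}\theta\|_{L^2}\|\nabla^N\theta\|_{L^2}$, and Gagliardo--Nirenberg interpolation of the $\theta$-factors between $\|\theta\|_{L^\infty L^2}$ and $\|\nabla^{N+1}\theta\|_{L^2}$ followed by Young's inequality absorbs the full dissipation and leaves a surviving factor of the form $\|\nabla^j v\|_0^{(N+1)/(j+1)}\|\theta\|_{L^\infty L^2}^2$; after re-indexing one recognizes the sum appearing in the lemma.

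The principal obstacle is the iterated version of the first estimate: carrying the integration-by-parts through $M+1$ steps requires the intermediate quantities $\|\nabla^j a\|_{L^2}^2/\lambda^{2j}$ with $1\leq j\leq M$ to be subsumed into the two extreme contributions, which one handles either by interpolation (exploiting $\lambda\geq 1$) or by a discrete Gronwall argument in the iteration parameter; in addition, any derivatives of $v$ or occurrences of $\partial_t a$ in the residuals must be absorbed into the implicit constant $C(M)$, since they do not appear on the right-hand side of the stated inequality.
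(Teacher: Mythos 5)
Your plan is essentially the paper's: for the $L^2$ bound, iterated integration by parts on the oscillatory source followed by Young's inequality and Gronwall, and for the higher-derivative bound, the energy estimate at level $N$ with a Leibniz expansion of the transport term, interpolation of the $\theta$-factors against $\|\theta\|_{L^2}$ and $\|\nabla^{N+1}\theta\|_{L^2}$, and a final Young. The paper collapses the intermediate terms $\|\nabla^i a\|/\lambda^{i+1}$ precisely by the interpolation route you mention (writing $\|\nabla^i a\|/\lambda^{i+1}\le \|\nabla^{M+1}a\|/\lambda^{M+2}+\|a\|/\lambda$), so that part is fine.

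Two remarks. First, for $\nabla^N\theta$ you pass through the genuine commutator $[\nabla^\alpha,v\cdot\nabla]\theta$, using ${\rm div}\,v=0$ to kill the term with no derivative on $v$, and pair it with $\nabla^N\theta$. The paper does not use the commutator: it pairs $\nabla^N(v\cdot\nabla\theta)$ with $\nabla^N\theta$, moves one derivative by parts to get $\langle\nabla^{N-1}(v\cdot\nabla\theta),\nabla^{N+1}\theta\rangle$, and keeps the zero-derivative-on-$v$ term. Consequently your surviving sum runs over $j=1,\dots,N$, with an endpoint $\|\nabla^N v\|_0$ and without $\|v\|_0^{N+1}$, while the lemma's sum is over $k=0,\dots,N-1$; these are not a re-indexing of one another as you claim, though both are correct upper bounds that serve identically when applied later (both are $\lesssim \delta_q^{1/2}\lambda_q^N$). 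Second, your worry about derivatives of $v$ or $\partial_t a$ entering the residuals is a red herring for the main route: in the energy identity the transport term cancels exactly and the iterated integration by parts is purely spatial, so neither $\nabla v$ nor $\partial_t a$ ever appears. Those terms only arise in the alternative asymptotic-expansion route you sketch, which is precisely why the paper does not take it.
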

\begin{proof}
{\bf Estimate on $\|\theta\|_{L^\infty L^2}$:}
 A direct energy estimate gives
\begin{align}
\frac12\frac{d}{dt}\| \theta(t,\cdot)\|_{L^2}^2 +\|\nabla \theta(t,\cdot)\|_{L^2}^2
	\leq  \Big|\int_{{\rm T}^2}a(t,x)\cos(\lambda k\cdot x)\theta(t,x)dx\Big|.\nonumber
\end{align}
By integration by parts, for any integer $M$, there holds
\begin{align}
&\Big|\int_{{\rm T}^2}a(t,x)\cos(\lambda k\cdot x)\theta(t,x)dx\Big|\nonumber\\
\leq& \frac{\|a(t,\cdot)\|_{L^2}\|\nabla\theta(t,\cdot)\|_{L^2}}{\lambda}
+\frac{1}{\lambda}\Big|\int_{{\rm T}^2}\nabla a(t,x)\sin(\lambda k\cdot x)\theta(t,x)dx\Big| \nonumber\\
\leq & \sum_{i=0}^M \frac{\|\nabla^ia(t,\cdot)\|_{L^2}\|\nabla\theta(t,\cdot)\|_{L^2}}{\lambda^{i+1}}
+\frac{\|\nabla^{M+1}a(t,\cdot)\|_{L^2}\|\theta(t,\cdot)\|_{L^2}}{\lambda^{M+1}}\nonumber\\
\leq & \|\nabla \theta(t,\cdot)\|_{L^2}^2+ \sum_{i=0}^M \frac{\|\nabla^ia(t,\cdot)\|_{L^2}^2}{\lambda^{2(i+1)}}+\frac{\|\nabla^{M+1}a(t,\cdot)\|_{L^2}^2}{\lambda^{2(M+1)}}
+\|\theta(t,\cdot)\|^2_{L^2},\nonumber
\end{align}
thus, we deduce that
\begin{align}
\|\theta\|_{L^\infty L^2_x}\leq& C\sum_{i=0}^M \frac{\|\nabla^i a \|_{L^\infty L^2}}{\lambda^{i+1}}
+ C\frac{\|\nabla^{M+1}a\|_{L^\infty L^2}}{\lambda^{M+1}}.\nonumber
\end{align}
Finally, using interpolation inequality and Young inequality, we know
\begin{align}
\frac{\|\nabla^i a \|_{L^\infty L^2}}{\lambda^{i+1}}\leq \frac{\|\nabla^{M+1} a \|_{L^\infty L^2}}{\lambda^{M+2}}+ \frac{\|a\|_{L^\infty L^2}}{\lambda}, \nonumber
\end{align}
which gives the estimate on $\|\theta\|_{L^\infty L^2_x}$ in (\ref{e:transport difusion estimate}).

{\bf Estimate on $\|\theta\|_{L^\infty \dot{H}^N}$($N\geq 1$):}
For $N\geq 1$, acting $\nabla^N $ on both sides of (\ref{e:transport-diffusion equation with decay source'}), taking $L^2$ inner products with $\nabla^N \theta$ and integrating by parts, we obtain
\begin{align}\label{e:energy estimate n-derivative}
&\frac12 \frac{d}{dt}\|\nabla^N\theta(t,\cdot)\|_{L^2}^2+ \|\nabla^{N+1}\theta(t,\cdot)\|_{L^2}^2\nonumber\\[5pt]
\leq & |\big<\nabla^{N-1}(v\cdot \nabla \theta, \nabla^{N+1}\theta)\big>|+|\big<\nabla^{N-1}(a(t,\cdot)\cos(\lambda k\cdot )), \nabla^{N+1}\theta)\big>|\nonumber\\[3pt]
	\leq & \sum_{k=0}^{N-1}C_{N-1}^k\|\nabla^k v\|_0 \|\nabla^{N-k}\theta(t,\cdot)\|_{L^2}\|\nabla^{N+1}\theta(t,\cdot)\|_{L^2} \nonumber\\[3pt]
&+ \frac12\|\nabla^{N+1}\theta(t,\cdot)\|_{L^2}^2+\|\nabla^{N-1}(a(t,\cdot)\cos(\lambda k\cdot ))\|_{L^2}^2.
\end{align}
Using interpolation inequality
\begin{align}
\|\nabla^m\theta\|_{L^2}\leq \| \theta\|^{\frac{N+1-m}{N+1}}_{L^2}\|\nabla^{N+1}\theta\|^{\frac{m}{N+1}}_{L^2}, \quad 0\leq m\leq N, \nonumber
\end{align}
and Young inequality, we deduce that
\begin{align}
&\sum_{k=0}^{N-1}C_{N-1}^k\|\nabla^k v\|_0 \|\nabla^{N-k}\theta(t,\cdot)\|_{L^2}\|\nabla^{N+1}\theta(t,\cdot)\|_{L^2}\nonumber\\
\leq &\sum_{k=0}^{N-1}C_{N-1}^k\|\nabla^k v\|_0 \|\theta(t,\cdot)\|^{\frac{k+1}{N+1}}_{L^2}\|\nabla^{N+1}\theta(t,\cdot)\|_{L^2}^{\frac{2N+1-k}{N+1}}\nonumber\\
\leq& \frac12\|\nabla^{N+1}\theta(t,\cdot)\|_{L^2}^2+ C(N)\sum_{k=0}^{N-1}\|\nabla^k v\|_0^{\frac{2(N+1)}{k+1}} \|\theta(t,\cdot)\|_{L^2}^2. \nonumber
\end{align}
Moreover, it's easy to obtain
\begin{align}
\|\nabla^{N-1}(a(t,\cdot)\cos(\lambda k\cdot ))\|_{L^2}\leq C(N)\|(\nabla^{N-1}a(t,\cdot), \lambda^{N-1}a(t,\cdot))\|_{L^2}. \nonumber
\end{align}
Putting these estimates into (\ref{e:energy estimate n-derivative}), we get
\begin{align}
\frac12 \frac{d}{dt}\|\nabla^N\theta(t,\cdot)\|_{L^2}^2
	\leq & C(N)\sum_{k=0}^{N-1}\|\nabla^k v\|_0^{\frac{2(N+1)}{k+1}} \|\theta(t,\cdot)\|_{L^2}^2\nonumber\\
&+C(N)\|(\nabla^{N-1}a(t,\cdot), \lambda^{N-1}a(t,\cdot))\|_{L^2}^2,\nonumber
\end{align}
thus, we deduce that
\begin{align}
\|\nabla^N\theta\|_{L^\infty L^2}
	\leq  C(N)\|\theta\|_{L^\infty L^2}\sum_{k=0}^{N-1}\|\nabla^k v\|_0^{\frac{N+1}{k+1}}+
C(N)\|(\nabla^{N-1}a, \lambda^{N-1}a)\|_{L^\infty L^2},\nonumber
\end{align}
this completes the proof of this lemma.
\end{proof}

\begin{Remark}
Similarly, we consider
\begin{align}\label{e:transport-diffusion equation with decay source 1}
\left\{
\begin{array}{ll}
\partial_t \theta+v\cdot \nabla \theta- \triangle \theta=a(t,x)\sin(\lambda k\cdot x), \quad x \in {\rm T}^2, \\[3pt]
{\rm div}v=0, \\[3pt]
\theta(0,x)=0,
\end{array}
\right.
\end{align}
where $k, \lambda$ are as above. Then, the following fact still holds:
Let $\theta(t,x)$ be a solution of (\ref{e:transport-diffusion equation with decay source 1}), then the estimate (\ref{e:transport difusion estimate}) is valid for $\theta$.

Generally, if we consider the transport-diffusion equation
\begin{align}\label{e:transport-diffusion equation with decay source}
\left\{
\begin{array}{ll}
\partial_t \theta+v\cdot \nabla \theta- \triangle \theta=\sum_{|k|=1}a_k(t,x)e^{i\lambda k\cdot x}, \\[3pt]
{\rm div}v=0, \\[3pt]
\theta(0,x)=0,
\end{array}
\right.
\end{align}
where $a_k(t,x)$ complex-valued functions  with $\bar{a}_k(t,x)=a_{-k}(t,x)$. Then, there exists a unique solution $\theta(t,x)$ for (\ref{e:transport-diffusion equation with decay source}) and it satisfies the following estimate
\begin{align}\label{e:estimate on general source term}
\|\theta\|_{L^\infty L^2_x}\leq& C(M)\sum_{|k|=1} \frac{\| a_k \|_{L^\infty L^2}}{\lambda}
+ C(M)\sum_{|k|=1}\frac{\|\nabla^{M+1}a_k\|_{L^\infty L^2}}{\lambda^{M+1}},\quad \forall M\geq 1,\nonumber\\
\|\nabla^N\theta\|_{L^\infty L^2}
	\leq&  C(N)\|\theta\|_{L^\infty L^2}\sum_{i=0}^{N-1}\|\nabla^i v\|_0^{\frac{N+1}{i+1}}\nonumber\\
&+C(N)\sum_{|k|=1}\|(\nabla^{N-1}a_k, \lambda^{N-1}a_k)\|_{L^\infty L^2}, \quad N\geq 1.
\end{align}
\end{Remark}

\setcounter{equation}{0}

\section{Construction of $(v_{q+1}, p_{q+1}, \theta_{q+1}, \mathring{R}_{q+1})$}

In this section, we perform the inductive procedure which allows us to construct $(v_{q+1}, p_{q+1}, \theta_{q+1}, \\
\mathring{R}_{q+1})$ from $(v_{q}, p_{q}, \theta_{q}, \mathring{R}_{q})$. Recalling the choice of the sequence $\{\delta_q\}_{q\in N}$ and $\{\lambda_q\}_{q\in N}$,  for sufficiently large $a$, we have, for any $q\geq 1$,
\begin{align}
\sum_{j\leq q}\delta_j \lambda_j^k \leq 2 \delta_q\lambda_q^k,  \quad  \sum_{j\leq q}\delta_j^{\frac12} \lambda_j^k \leq 2 \delta_q^{\frac12}\lambda_q^k,  \quad k\geq 1. \nonumber
\end{align}

 As in \cite{BCDLI}, we write $(v, p, \theta, \mathring{R})$ instead of $(v_{q}, p_{q}, \theta_{q}, \mathring{R}_{q})$ and $(v_1, p_1, \theta_1, \mathring{R}_1)$ instead of $(v_{q+1}, p_{q+1}, \theta_{q+1},\\
  \mathring{R}_{q+1})$. Thus, the following estimates hold:
 \begin{align}
 \|v\|_0\leq& 2M, \quad \|v\|_m\leq C(m) \delta_q^{\frac12}\lambda_q^m, \quad m\geq 1, \label{e: estimate on known velocity}\\[3pt]
 \|\mathring{R}\|_0\leq&\eta \delta_{q+1} , \quad \|\mathring{R}\|_1\leq M \delta_{q+1}\lambda_q.  \label{e: estimate on known Reynold stress}
 \end{align}

\subsection{Space-time regularization of $v,\mathring{R}$} Let $\psi\in C_c^\infty(R^3\times R)$ be a radial symmetry nonnegative function supported in $[-1,1]^4$ and $\ell$ be a small parameter. Set
\begin{align}
v_{\ell}=v\ast \psi_{\ell},  \quad \mathring{R}_{\ell}=\mathring{R}\ast \psi_{\ell}. \nonumber
\end{align}
Standard estimates on convolutions give
\begin{align}\label{e:estimate on convolution 1}
\|v-v_{\ell}\|_0\leq CM\delta_q^{\frac12}\lambda_q \ell, \quad   \|\mathring{R}-\mathring{R}_{\ell}\|_0\leq CM\delta_{q+1}\lambda_q\ell,
\end{align}
and for any $N\geq 1$, there exists a constant $C=C(N)$ such that
\begin{align}\label{e:estimate on convolution 2}
\|v_{\ell}\|_{N}\leq CM\delta_q^{\frac12}\lambda_q \ell^{1-N}, \quad \|\mathring{R}_{\ell}\|_{N}\leq CM\delta_{q+1}\lambda_q \ell^{1-N}.
\end{align}

\subsection{Partition of unity on time} Fix a smooth function $\chi\in C_c^\infty ((-\frac34, \frac34))$ such that
\begin{align}
\sum_{l\in Z}\chi^2(x-l)=1 \nonumber
\end{align}
and a large parameter $\mu \in {\rm N}_+$ which will be determined later.

For any $l\in [0, \mu]$, set
\begin{align}\label{d:definition of l amplitude}
\rho_l:=\frac{1}{2(2\pi)^2}\Big[e\Big(\frac{l}{\mu}\Big)\big(1-\delta_{q+2}\big)-\int_{{\rm T}^2}|v|^2\Big(x, \frac{l}{\mu}\Big)dx\Big].
\end{align}
Due to (\ref{e:energy difference estimate}), we deduce that there exists a universal constant $C_0$ such that
\begin{align}\label{e:estimate on amplitude}
C_0^{-1} \min_{t\in [0,1]}e(t)\delta_{q+1}\leq\rho_l\leq C_0 \min_{t\in [0,1]}e(t)\delta_{q+1}.
\end{align}

\subsection{Construction of velocity perturbation $w$}

Firstly, for any integer $l\in [0,\mu]$, we construct a smooth functions $\Phi_l(t,x): {\rm T}^2\times [0,1]\rightarrow {\rm T}^2$ by solving the following transport equations:
\begin{align}\label{e:transport phase equation}
\left\{
\begin{array}{ll}
\partial_t \Phi_l+ v_{\ell}\cdot \nabla \Phi_l=0, \\[3pt]
\Phi_l(\frac{l}{\mu},x)=x.
\end{array}
\right.
\end{align}

Next, for any $k\in \Lambda_0\cup \Lambda_1$ and any integer $l\in [0, \mu]$, we set
\begin{align}\label{d:difinition on amplitude}
\chi_l(t):=&\chi(\mu t-l), \nonumber\\
a_{kl}(t,x):=&\sqrt{\rho_l}\gamma_k\Big(\frac{R_{\ell,l}(t,x)}{\rho_l}\Big),\nonumber\\
w_{kl}(t,x):=& a_{kl}(t,x)ike^{i\lambda_{q+1}k^\bot\cdot \Phi_l(t,x)}\textcolor{blue}{.}
\end{align}
Here and throughout the paper, $R_{\ell,l}(t,x):=\rho_l {\rm Id}- \mathring{R}_{\ell}(t,x)$.

Due to (\ref{e: estimate on known Reynold stress}) and (\ref{e:estimate on amplitude}), we deduce that there exists an $\eta=\eta(e, r_0):=C_0^{-1}r_0\min_{t\in [0,1]}e(t)$  such that
\begin{align}
\Big\|{\rm Id}-\frac{R_{\ell,l}}{\rho_l}\Big\|_0\leq \frac{r_0}{2}\textcolor{blue}{.}\nonumber
\end{align}
Hence $a_{kl}$ in (\ref{d:difinition on amplitude}) is well-defined.

We define the principle part $w_o$ of the velocity perturbation $w$
\begin{align}\label{d:definition of main perturbation}
w_o(t,x):= \sum_{l\in {\rm Z}}\sum_{k\in \Lambda_{(l)}} \chi_l(t)w_{kl}(t,x),
\end{align}
where
\begin{align}
\Lambda_{(l)}=\Lambda_{l~ mod ~ 2}.\nonumber
\end{align}
Moreover, as in \cite{BCDLI}, we set
\begin{align}\label{e:difinition of stationary phase}
\phi_{kl}(t,x):=e^{i\lambda_{q+1} k^\perp\cdot (\Phi_l(t,x)-x)}.
\end{align}
Then
\begin{align}\label{e:another formulation for main perturbation}
w_o(t,x)= \sum_{l\in {\rm Z}}\sum_{k\in \Lambda_{(l)}} \chi_l(t)a_{kl}ik \phi_{kl}e^{i\lambda_{q+1} k^\perp\cdot x}.
\end{align}

Then, set the incompressibility corrector $w_c$ as
\begin{align}\label{d:difinition of corrctor}
w_c(t,x):=&  -\sum_{l\in {\rm Z}}\sum_{k\in \Lambda_{(l)}}\chi_l(t) \frac{\nabla^{\bot}(a_{kl}(t,x)e^{i\lambda_{q+1} k^\perp\cdot (\Phi_l(t,x)-x)})}{\lambda_{q+1}}e^{i\lambda_{q+1} k^\perp\cdot x}\nonumber\\
:=& -\sum_{l\in {\rm Z}}\sum_{k\in \Lambda_{(l)}}\chi_l(t)\Big( \frac{\nabla^{\bot}a_{kl}(t,x)+ia_{kl}\lambda_{q+1}(\nabla^\bot \Phi_l(t,x)-{\rm \widetilde{Id}})k^\perp}{\lambda_{q+1}}\Big)e^{i\lambda_{q+1} k^\perp\cdot \Phi_l(t,x)}\textcolor{blue}{.}
\end{align}
Here and throughout the paper, $\widetilde{{\rm Id}}$ denotes
\begin{align}
\widetilde{{\rm Id}}=\left(\begin{array}{ccc}
0 & -1\\
1 & 0
\end{array}\right).\nonumber
\end{align}

Finally, we set
\begin{align}
w(t,x):=w_o(t,x)+w_c(t,x).
\end{align}
Obviously,
\begin{align}
w(t,x)=\sum_{l\in {\rm Z}}\sum_{k\in \Lambda_{(l)}}\chi_l(t) \frac{\nabla^{\bot}(a_{kl}(t,x)e^{i\lambda_{q+1} k^\perp\cdot \Phi_l(t,x)})}{\lambda_{q+1}}, \nonumber
\end{align}
hence ${\rm div}w(t,x)=0.$

Set
\begin{align}\label{d:difinition on L_{kl}}
L_{kl}(t,x):=a_{kl}(t,x)ik-\frac{\nabla^{\bot}a_{kl}(t,x)+ia_{kl}\lambda_{q+1}(\nabla^\bot \Phi_l(t,x)-\widetilde{{\rm Id}})k^\perp}{\lambda_{q+1}},
\end{align}
thus the perturbation $w$ can also be written as
\begin{align}\label{f:another formulation for full perturbation}
w=\sum_{l\in {\rm Z}}\sum_{k\in \Lambda_{(l)}}\chi_l(t) L_{kl}\phi_{kl}e^{i\lambda_{q+1} k^\perp\cdot x}.
\end{align}

After the construction of the perturbation $w$, we choose the constant $M$. By (\ref{e:estimate on amplitude}) and the support property of $\chi_l(t)$, it's direct to obtain
\begin{align}
\|w_o\|_0\leq  C_0(e)\delta_{q+1}^{\frac12}. \nonumber
\end{align}
Then, we set
\begin{align}
M=2 C_0(e); \nonumber
\end{align}
thus, there holds \begin{align}\label{e:estimate on main perturbation}
\|w_o\|_0\leq  \frac{M}{2}\delta_{q+1}^{\frac12}.
\end{align}

\subsection{Construction of new velocity $v_1$ and new temperature $\theta_1$} After the construction of the velocity perturbation $w$, we define the new velocity as follows:
\begin{align}
v_1(t,x):=v(t,x)+w(t,x). \nonumber
\end{align}
Thus there holds ${\rm div}v_1(t,x)=0$. Then, we define the new temperature by solving the following transport-diffusion equation
\begin{align}
\left\{
\begin{array}{ll}
\partial_t \theta_1 + v_1\cdot\nabla \theta_1-\triangle \theta_1=0,\\[3pt]
\theta_1(0,x)=\theta^0(x).
\end{array}
\right.
\end{align}
By the maximum principle, we deduce that
\begin{align}\label{e:estimate on temperature 1}
\|\theta_1\|_0\leq \|\theta^0\|_0.
\end{align}
Moreover, the basic energy estimate gives
\begin{align}\label{e:energy conservation of temperature}
\frac12\frac{d}{dt} \|\theta_1(t,\cdot)\|_{L^2}^2+\|\nabla \theta_1(t,\cdot)\|_{L^2}^2=0.
\end{align}
 Finally, it's obvious that
\begin{align}\label{e:vanishing of averaged temperature}
\fint_{{\rm T}^2}\theta_1(t,x)dx=0.
\end{align}

\subsection{The new pressure $p_1$ and new Reynolds stress $\mathring{R}_1$}

We first compute ${\rm div}(w_o\otimes w_o)$. Recalling (\ref{d:definition of main perturbation}), we deduce
\begin{align}\label{c:computation of nonlinear interaction}
w_o\otimes w_o= &\sum_{l\in {\rm Z}}\sum_{k\in \Lambda_{(l)}}\chi_l^2 w_{kl}\otimes w_{-kl}+ \sum_{l \in {\rm Z}}\sum_{k,k'\in \Lambda_{(l)},k+k'\neq 0}\chi_l^2 w_{kl}\otimes w_{k'l}\nonumber\\
&+ \sum_{l\neq l', k\in \Lambda_{(l)}, k'\in \Lambda_{(l')}}\chi_l \chi_{l'}w_{kl}\otimes w_{k'l'}\nonumber\\[3pt]
= & \sum_{l\in {\rm Z}}\chi_l^2 R_{\ell,l}+ \sum_{l \in {\rm Z}}\sum_{k,k'\in \Lambda_{(l)},k+k'\neq 0}\chi_l^2 w_{kl}\otimes w_{k'l}\nonumber\\
&+ \sum_{l\neq l', k\in \Lambda_{(l)}, k'\in \Lambda_{(l')}}\chi_l \chi_{l'}w_{kl}\otimes w_{k'l'},
\end{align}
where we used the following fact
\begin{align}
\sum_{l\in {\rm Z}}\sum_{k\in \Lambda_{(l)}}\chi_l^2 w_{kl}\otimes w_{-kl}=&\sum_{l\in {\rm Z}}\sum_{k\in \Lambda_{(l)}}\chi_l^2 \rho_l\gamma_k^2\Big(\frac{R_{\ell,l}}{\rho_l}\Big)k\otimes k \nonumber\\
=&\sum_{l\in {\rm Z}}\chi_l^2 \rho_l\sum_{k\in \Lambda_{(l)}}\gamma_k^2\Big(\frac{R_{\ell,l}}{\rho_l}\Big)k\otimes k
=\sum_{l\in {\rm Z}}\chi_l^2 R_{\ell,l}. \nonumber
\end{align}
Here we used Geometric Lemma \ref{l:geometric lemma} in the last step.

Furthermore,
\begin{align}\label{c:computation of nonlinear interaction in same index}
&\sum_{l \in {\rm Z}}\sum_{k,k'\in \Lambda_{(l)},k+k'\neq 0}\chi_l^2 w_{kl}\otimes w_{k'l}\nonumber\\
=&\frac12 \sum_{l \in {\rm Z}}\sum_{k,k'\in \Lambda_{(l)},k+k'\neq 0}\chi_l^2 (w_{kl}\otimes w_{k'l}+w_{k'l}\otimes w_{kl})\nonumber\\
=&-\frac12 \sum_{l \in {\rm Z}}\sum_{k,k'\in \Lambda_{(l)},k+k'\neq 0}\chi_l^2 a_{kl}a_{k'l}\phi_{kl}\phi_{k'l}[k \otimes k'+k'\otimes k]e^{i\lambda_{q+1}(k+k')^\perp\cdot x}\textcolor{blue}{;}
\end{align}
thus, by Proposition \ref{Prop:Beltrami-2D}, we deduce that
\begin{align}
&{\rm div}\Big(\sum_{l \in {\rm Z}}\sum_{k,k'\in \Lambda_{(l)},k+k'\neq 0}\chi_l^2 w_{kl}\otimes w_{k'l}\Big)\nonumber\\
=&-\frac12 \sum_{l \in {\rm Z}}\sum_{k,k'\in \Lambda_{(l)},k+k'\neq 0}\chi_l^2[k \otimes k'+k'\otimes k]\nabla(a_{kl}a_{k'l}\phi_{kl}\phi_{k'l})e^{i\lambda_{q+1}(k+k')^\perp\cdot x}\nonumber\\
&-\frac12 \sum_{l \in {\rm Z}}\sum_{k,k'\in \Lambda_{(l)},k+k'\neq 0}\chi_l^2 a_{kl}a_{k'l}\phi_{kl}\phi_{k'l}[k \otimes k'+k'\otimes k]i\lambda_{q+1}(k+k')^\perp e^{i\lambda_{q+1}(k+k')^\perp\cdot x}\nonumber\\
=&T^1_{osc}
-\nabla\Big(\frac12 \sum_{l \in {\rm Z}}\sum_{k,k'\in \Lambda_{(l)},k+k'\neq 0}\chi_l^2 a_{kl}a_{k'l}\phi_{kl}\phi_{k'l}\big(k\cdot k'+1\big)e^{i\lambda_{q+1}(k+k')^\perp\cdot x}\Big), \nonumber
\end{align}
where
\begin{align}\label{e:definition of T1}
T^1_{osc}=&-\frac12 \sum_{l \in {\rm Z}}\sum_{k,k'\in \Lambda_{(l)},k+k'\neq 0}\chi_l^2[k \otimes k'+k'\otimes k]\nabla(a_{kl}a_{k'l}\phi_{kl}\phi_{k'l})e^{i\lambda_{q+1}(k+k')^\perp\cdot x}\nonumber\\
&+\frac12 \sum_{l \in {\rm Z}}\sum_{k,k'\in \Lambda_{(l)},k+k'\neq 0}\chi_l^2\big(k\cdot k'+1\big)\nabla(a_{kl}a_{k'l}\phi_{kl}\phi_{k'l})e^{i\lambda_{q+1}(k+k')^\perp\cdot x}.
\end{align}

Moreover, set
\begin{align}\label{d:difinition on f}
f_{klk'l'}:=\chi_l\chi_{l'}a_{kl}a_{k'l'}\phi_{kl}\phi_{k'l'}\textcolor{blue}{.}
 \end{align}
From (\ref{e:another formulation for main perturbation}) and the fact $f_{klk'l'}=f_{k'l'kl}$, we deduce that
\begin{align}\label{c:computation of nonlinear interaction in different index}
&\sum_{l\neq l', k\in \Lambda_{(l)}, k'\in \Lambda_{(l')}}\chi_l \chi_{l'}w_{kl}\otimes w_{k'l'}\nonumber\\
=& -\sum_{l\neq l', k\in \Lambda_{(l)}, k'\in \Lambda_{(l')}}f_{klk'l'} k \otimes k' e^{i\lambda_{q+1}(k+k')^\perp\cdot x}\nonumber\\
=& -\sum_{l\neq l', k\in \Lambda_{(l)}, k'\in \Lambda_{(l')}}f_{klk'l'} k' \otimes k e^{i\lambda_{q+1}(k+k')^\perp\cdot x}\nonumber\\
=& -\frac12\sum_{l\neq l', k\in \Lambda_{(l)}, k'\in \Lambda_{(l')}}f_{klk'l'} \big(k \otimes k' +k'\otimes k\big)e^{i\lambda_{q+1}(k+k')^\perp\cdot x}.
\end{align}
Thus, by Proposition \ref{Prop:Beltrami-2D}, there hold
\begin{align}
&{\rm div}\Big(\sum_{l\neq l', k\in \Lambda_{(l)}, k'\in \Lambda_{(l')}}\chi_l \chi_{l'}w_{kl}\otimes w_{k'l'}\Big)\nonumber\\
=& -\frac12\sum_{l\neq l', k\in \Lambda_{(l)}, k'\in \Lambda_{(l')}} \big(k \otimes k' +k'\otimes k\big)\nabla f_{klk'l'}
e^{i\lambda_{q+1}(k+k')^\perp\cdot x}\nonumber\\
&-\frac12\sum_{l\neq l', k\in \Lambda_{(l)}, k'\in \Lambda_{(l')}} \big(k\cdot k'+1\big) f_{klk'l'}\nabla e^{i\lambda_{q+1}(k+k')^\perp\cdot x}\nonumber\\
:=&T^2_{osc}-\frac12\nabla \Big[\sum_{l\neq l', k\in \Lambda_{(l)}, k'\in \Lambda_{(l')}} \big(k\cdot k'+1\big) f_{klk'l'}e^{i\lambda_{q+1}(k+k')^\perp\cdot x}\Big],\nonumber
\end{align}
where
\begin{align}\label{d:definiion on second oscillatory}
T^2_{osc}=& -\frac12\sum_{l\neq l', k\in \Lambda_{(l)}, k'\in \Lambda_{(l')}} \big(k\otimes k' +k' \otimes k\big)\nabla f_{klk'l'}e^{i\lambda_{q+1}(k+k')^\perp\cdot x}\nonumber\\
&+\frac12\sum_{l\neq l', k\in \Lambda_{(l)}, k'\in \Lambda_{(l')}} \big(k\cdot k'+1\big) \nabla f_{klk'l'}e^{i\lambda_{q+1}(k+k')^\perp\cdot x}.
\end{align}
Set
\begin{align}\label{e:perturbation on pressure}
P:=&\frac12 \sum_{l \in {\rm Z}}\sum_{k,k'\in \Lambda_{(l)},k+k'\neq 0}\chi_l^2 a_{kl}a_{k'l}\phi_{kl}\phi_{k'l}\big(k\cdot k'+1\big)e^{i\lambda_{q+1}(k+k')^\perp\cdot x}\nonumber\\
&+\frac12\sum_{l\neq l', k\in \Lambda_{(l)}, k'\in \Lambda_{(l')}} \big(k\cdot k'+1\big) f_{klk'l'}e^{i\lambda_{q+1}(k+k')^\perp\cdot x}\textcolor{blue}{;}
\end{align}
thus, by (\ref{c:computation of nonlinear interaction})-(\ref{e:perturbation on pressure}), we deduce
\begin{align}\label{e:formular of oscillatory term}
{\rm div}\Big(w_o\otimes w_o- \sum_l\chi^2_l R_{\ell,l}+ P{\rm Id}\Big)=T^1_{osc}+T^2_{osc}.
\end{align}

After the computation of ${\rm div}(w_o \otimes w_o)$, we set
\begin{align}\label{d:difinition of Reynold stress}
\mathring{R}_1:=& \underbrace{\mathcal{R}(\partial_t w+v_{\ell}\cdot \nabla w)}_{R^0}+ \underbrace{\mathcal{R}( w\cdot\nabla v_{\ell})}_{R^1}
-\underbrace{\mathcal{R}((\theta_1-\theta)e_2)}_{R^2}\nonumber\\
&+\underbrace{\mathcal{R}\Big({\rm div}\Big(w_o\otimes w_o-\sum_l\chi^2_l R_{\ell,l}+P{\rm Id}\Big)\Big)}_{R^3}\nonumber\\
&+\underbrace{w_o\otimes w_c+w_c\otimes w_o+w_c\otimes w_c-\frac{|w_c|^2+2 w_o\cdot w_c}{2}{\rm Id}}_{R^4}\nonumber\\
&+\underbrace{w\otimes(v-v_{\ell})+(v-v_{\ell})\otimes w- (v-v_{\ell})\cdot w{\rm Id}}_{R^5}
+ \underbrace{\mathring{R}-\mathring{R}_{\ell}}_{R^6}.
\end{align}

Obviously, there holds ${\rm tr}\mathring{R}_1=0$. Finally, put
\begin{align}\label{d:definition of new pressure}
p_1=p+P-\frac{|w_c|^2+2 w_o\cdot w_c}{2}-(v-v_{\ell})\cdot w.
\end{align}
A direct computation gives
\begin{align}
\partial_t v_1+v_1\cdot\nabla v_1+\nabla p_1=\theta_1 e_2+{\rm div}\mathring{R}_1. \nonumber
\end{align}
In fact,
\begin{align}
{\rm div}\mathring{R}_1=&\partial_t w+v\cdot \nabla w+ w\cdot\nabla v-(\theta_1-\theta)e_2\nonumber\\[3pt]
&+{\rm div}\Big(w_o\otimes w_o-\sum_l\chi^2_l R_{\ell,l}+P{\rm Id}\Big)+ {\rm div}\Big(\mathring{R}-\mathring{R}_{\ell}\Big)\nonumber\\
&+{\rm div}\Big(w_o\otimes w_c+w_c\otimes w_o+w_c\otimes w_c-\frac{|w_c|^2+2 w_o\cdot w_c}{2}{\rm Id}-(v-v_{\ell})\cdot w {\rm Id}\Big)\nonumber\\[3pt]
=&\partial_t v_1+v_1\cdot\nabla v_1+\nabla p_1-\theta_1 e_2,\nonumber
\end{align}
where we used
\begin{align}
-{\rm div}\Big(\sum_l\chi^2_l R_{\ell,l}\Big)=-{\rm div}\Big(\sum_l\chi^2_l (\rho_l - \mathring{R}_{\ell})\Big)={\rm div} \mathring{R}_{\ell}. \nonumber
\end{align}
Thus, the new functions $(v_1, p_1, \theta_1, \mathring{R}_1)$ solves the Boussinesq-Reynold (\ref{e:boussinesq-reynold equation}) system.

\setcounter{equation}{0}

\section{Estimate on the perturbation}

\subsection{Some elementary inequalities}

Recall the following elementary inequalities:
\begin{align}\label{e:elementary inequality}
[fg]_\alpha\leq& C\big([f]_\alpha \|g\|_0+[g]_\alpha\|f\|_0\big), \quad \forall \alpha \geq 0\nonumber\\
\|f g\|_m\leq& C(m)\big(\|f\|_m \|g\|_0+\|g\|_m\|f\|_0\big), \quad \forall m\in {\rm N}\nonumber\\
[f\circ g]_m\leq & C\big([f]_1[g]_m+\|\nabla f\|_{m-1}\|g\|_1^m\big), \quad m\in {\rm N}_+.
\end{align}

\subsection{Condition on the parameter} To simplify the computation, as in \cite{BCDLI}, we assume the following conditions on $\mu, \ell\leq 1, \lambda_{q+1}\geq 1$:
\begin{align}\label{e:parameter assumption 1}
\frac{\delta_q^{\frac12}\lambda_q \ell}{\delta_{q+1}^{\frac12}}\leq 1, \quad \frac{\delta_q^{\frac12}\lambda_q}{\mu}+\frac{1}{\ell \lambda_{q+1}}\leq \lambda^{-\beta}_{q+1}, \quad \frac{1}{\lambda_{q+1}}\leq \frac{\delta^{\frac12}_{q+1}}{\mu},
\end{align}
where $\beta> 0$ is a number which will be determined later. These conditions imply
\begin{align}\label{e:parameter assumption 2}
\frac{1}{\delta_{q+1}^{\frac12}\lambda_{q+1}}\leq \frac{1}{\mu}\leq \frac{1}{\delta_{q}^{\frac12}\lambda_{q}}, \quad \frac{1}{\lambda_{q+1}}\leq \ell \leq \frac{1}{\lambda_{q}}.
\end{align}

\subsection{Estimate on velocity perturbation}
In this subsection, we collect some estimates on the velocity perturbation.

\begin{Lemma}
Assume (\ref{e:parameter assumption 1}) holds. For any $l\in Z$ and $t$ in the range $|\mu t-l|<1$, we have
\begin{align}\label{e:estimate on transport phase}
\|\nabla \Phi_l\|_0\leq& C, \nonumber\\
 \|\nabla \Phi_l-{\rm Id}\|_0\leq& C\delta_q^{\frac12}\lambda_q\mu^{-1}, \nonumber\\
\|\nabla\Phi_l\|_N\leq& C(N)\delta_q^{\frac12}\lambda_q\mu^{-1} \ell^{-N}, \quad N\geq 1.
\end{align}
Moreover, there hold
\begin{align}\label{e:estimate on various amplitude}
\|a_{kl}\|_0+\|L_{kl}\|_0\leq& C\delta^{\frac12}_{q+1}, \nonumber\\
 \|a_{kl}\|_N\leq& C(N)\delta^{\frac12}_{q+1}\lambda_q \ell^{1-N}, \quad N\geq 1 \nonumber\\
\|L_{kl}\|_N\leq& C(N)\delta^{\frac12}_{q+1} \ell^{-N}, \quad N\geq 1 \nonumber\\
\|(\partial_t+v_{\ell}\cdot\nabla)L_{kl}\|_N\leq& C(N)\delta^{\frac12}_{q+1}\ell^{-N-1}, \quad \forall N\geq 0  \nonumber\\
\|\phi_{kl}\|_N\leq& C(N)\lambda_{q+1}\delta_q^{\frac12}\lambda_q\mu^{-1} \ell^{-N+1}+C\Big(\delta_q^{\frac12}\lambda_q\lambda_{q+1}\mu^{-1}\Big)^N\nonumber\\
\leq& C(N)\lambda_{q+1}^{N(1-\beta)},\quad  N\geq 1.
\end{align}
Consequently, there hold
\begin{align}\label{e:estimate on velocity perturbation}
\|w_c\|_N\leq & C(N)\delta_{q+1}^{\frac12}\delta_q^{\frac12}\lambda_q\mu^{-1}\lambda^N_{q+1}, \quad N\geq 0 \nonumber\\
 \|w_o\|_N\leq&  C(N)\delta_{q+1}^{\frac12}\lambda^N_{q+1}, \quad N\geq 1.
\end{align}
\end{Lemma}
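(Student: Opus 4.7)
The plan is to establish the bounds in the order stated, since each group depends on the previous ones. \emph{Flow map.} Writing $\Psi_l := \Phi_l - x$, the transport equation (\ref{e:transport phase equation}) becomes $(\partial_t + v_\ell\cdot\nabla)\Psi_l = -v_\ell$ with $\Psi_l(l/\mu,\cdot) = 0$. A direct Gr\"{o}nwall argument along the characteristics of $v_\ell$, using $\|\nabla v_\ell\|_0 \le C M\delta_q^{1/2}\lambda_q$ from (\ref{e:estimate on convolution 2}) and the relevant time window $|t - l/\mu|\le 1/\mu$, produces $\|\nabla\Psi_l\|_0 \le C\delta_q^{1/2}\lambda_q/\mu$ and hence $\|\nabla\Phi_l\|_0\le C$. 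For $N\ge 1$ the higher-order estimate follows by induction on $N$, applying the Fa\`{a} di Bruno formula to $\nabla^{N+1}\Phi_l$ and inserting $\|v_\ell\|_{N+1}\le CM\delta_q^{1/2}\lambda_q\ell^{-N}$ from (\ref{e:estimate on convolution 2}).

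\emph{Amplitudes and material derivatives.} Because $\gamma_k$ is smooth near $\mathrm{Id}$ and the choice of $\eta$ in (\ref{e:estimate on amplitude}) keeps $R_{\ell,l}/\rho_l$ uniformly inside that neighbourhood, the composition rule in (\ref{e:elementary inequality}) combined with $\|\mathring R_\ell\|_N\le CM\delta_{q+1}\lambda_q\ell^{1-N}$ and $\rho_l\gtrsim \delta_{q+1}$ yields $\|a_{kl}\|_N\le C(N)\delta_{q+1}^{1/2}\lambda_q\ell^{1-N}$. Inserting this together with the flow map bounds into (\ref{d:difinition on L_{kl}}) gives the $L_{kl}$ estimates; the a priori dangerous factor $\lambda_{q+1}(\nabla^\perp\Phi_l-\widetilde{\mathrm{Id}})$ is rendered harmless by the smallness $\|\nabla\Phi_l-\mathrm{Id}\|_0\le C\delta_q^{1/2}\lambda_q/\mu$ together with the first condition in (\ref{e:parameter assumption 1}). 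For the advective derivative, differentiating (\ref{e:transport phase equation}) in space gives $(\partial_t+v_\ell\cdot\nabla)\nabla\Phi_l = -(\nabla v_\ell)\nabla\Phi_l$; iterating this identity and combining it with mollification bounds on $\partial_t\mathring R_\ell$ and $\partial_t v_\ell$ shows that applying $\partial_t+v_\ell\cdot\nabla$ to any factor in $L_{kl}$ costs at most one extra power of $\ell^{-1}$, matching the stated bound.

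\emph{Phase bounds --- the main obstacle.} Writing $\phi_{kl}=\exp(i\lambda_{q+1}k^\perp\cdot(\Phi_l-x))$, the Fa\`{a} di Bruno formula expresses $\nabla^N\phi_{kl}$ as a sum over partitions $m_1+\cdots+m_j = N$, $m_i\ge 1$, of terms
\[
\phi_{kl}\cdot(i\lambda_{q+1})^j\prod_{i=1}^j k^\perp\!\cdot\nabla^{m_i}(\Phi_l-x).
\]
Bounding the $m_i=1$ factors by $\delta_q^{1/2}\lambda_q/\mu$ and the $m_i\ge 2$ factors by $\delta_q^{1/2}\lambda_q\mu^{-1}\ell^{1-m_i}$, a short calculation shows that each such term has the form $(\delta_q^{1/2}\lambda_q\lambda_{q+1}/\mu)^j(1/\ell)^{N-j}$ for some $1\le j\le N$, and is therefore dominated by the sum of the two extreme cases: $j=N$ with all $m_i=1$ gives $(\delta_q^{1/2}\lambda_q\lambda_{q+1}/\mu)^N$, while $j=1$, $m_1=N$ gives $\lambda_{q+1}\delta_q^{1/2}\lambda_q\mu^{-1}\ell^{1-N}$. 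The second line in the statement is then obtained by substituting $\delta_q^{1/2}\lambda_q/\mu\le\lambda_{q+1}^{-\beta}$ and $(\ell\lambda_{q+1})^{-1}\le\lambda_{q+1}^{-\beta}$ from (\ref{e:parameter assumption 1}), which simultaneously force both extremes to be $\lesssim\lambda_{q+1}^{N(1-\beta)}$. The delicate point is precisely this combinatorial bookkeeping: one must verify that every intermediate partition really is controlled by one of the two endpoint cases.

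\emph{Velocity perturbation.} For $w_o$ in the form (\ref{e:another formulation for main perturbation}) and $w_c$ in the form (\ref{d:difinition of corrctor}), Leibniz's rule distributes derivatives across the amplitude factors and the high-frequency exponential $e^{i\lambda_{q+1}k^\perp\cdot x}$, the latter contributing the full power $\lambda_{q+1}^N$; the remaining factors $a_{kl}$, $\phi_{kl}$ and their derivatives are then controlled by the bounds established above. The improvement of $w_c$ over $w_o$ stems from the explicit division by $\lambda_{q+1}$ in (\ref{d:difinition of corrctor}) combined with the small factor $\delta_q^{1/2}\lambda_q/\mu$ appearing in either $\nabla^\perp a_{kl}$ (via $\|a_{kl}\|_1\le C\delta_{q+1}^{1/2}\lambda_q$ and (\ref{e:parameter assumption 1})) or $\nabla^\perp\Phi_l-\widetilde{\mathrm{Id}}$, which together deliver precisely the claimed $\delta_{q+1}^{1/2}\delta_q^{1/2}\lambda_q\mu^{-1}\lambda_{q+1}^N$ upper bound.
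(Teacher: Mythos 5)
Your proposal is correct and follows essentially the same route as the paper: the flow map bounds are the standard transport estimates (the paper simply cites its Appendix~A rather than rederiving Gr\"onwall), the amplitude and $L_{kl}$ bounds use the same composition rule (\ref{e:elementary inequality}), the $\phi_{kl}$ bound is the same chain-rule/Fa\`a di Bruno bookkeeping with the same two extremal partitions, and the $w_o,w_c$ bounds are the same Leibniz computation. Your verification that the intermediate partitions are dominated by the endpoints is a correct observation (each term is $A^jB^{N-j}$ with $A=\delta_q^{1/2}\lambda_q\lambda_{q+1}\mu^{-1}$, $B=\ell^{-1}$, which is monotone in $j$), so the flagged ``delicate point'' is in fact resolved by your own calculation.
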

\begin{proof}
{\bf Estimate on $\Phi_l$:} The first estimate in (\ref{e:estimate on transport phase}) can be directly obtained by using (\ref{e:transport estimate for holder derivative}), the second and third estimates in (\ref{e:estimate on transport phase}) can be directly obtained by using (\ref{e:inverse flux estimate}).

{\bf Estimates on $a_{kl}$:} Firstly, by (\ref{e:estimate on amplitude}), it's easy to obtain
\begin{align}
\|a_{kl}\|_0\leq C \delta_{q+1}^{\frac12}. \nonumber
\end{align}

Recalling (\ref{e: estimate on known Reynold stress}), (\ref{e:estimate on convolution 2}), (\ref{e:estimate on amplitude}), (\ref{d:difinition on amplitude}),  (\ref{e:elementary inequality}) and (\ref{e:parameter assumption 2}),  it's easy to obtain
\begin{align}
\|a_{kl}\|_N\leq& C(N)\delta_{q+1}^{\frac12}\Big(\Big\|\frac{\mathring{R}_{\ell}}{\rho_l}\Big\|_N + \Big\|\frac{\mathring{R}_{\ell}}{\rho_l}\Big\|_1^N\Big)\nonumber\\[3pt]
\leq& C(N)\delta_{q+1}^{\frac12}\big(\lambda_q\ell^{-N+1} + \lambda_q^N\big)\leq C(N)\delta_{q+1}^{\frac12}\lambda_q\ell^{-N+1} \nonumber
\end{align}
for any $N\geq 1$.

{\bf Estimates on $L_{kl}$:}
Recalling (\ref{d:difinition on L_{kl}}), by (\ref{e:estimate on transport phase}) and (\ref{e:parameter assumption 1}), we deduce that
\begin{align}
\|L_{kl}\|_0\leq &C\big(\|a_{kl}\|_0+\|a_{kl}\|_{1}\lambda_{q+1}^{-1}+\|a_{kl}(\nabla \Phi_l-{\rm Id})\|_0\big)\nonumber\\
\leq& C\delta_{q+1}^{\frac12}\big(1+\lambda_q \lambda_{q+1}^{-1}+\delta_q^{\frac12}\lambda_q \mu^{-1}\big)\leq C\delta_{q+1}^{\frac12},\nonumber\\
\|L_{kl}\|_N\leq &C(N)\big(\|a_{kl}\|_{N}+\|a_{kl}\|_{N+1}\lambda_{q+1}^{-1}+\|a_{kl}(\nabla \Phi_l-{\rm Id})\|_{N}\big)\nonumber\\
\leq& C(N)\delta_{q+1}^{\frac12}\ell^{-N}\big(\lambda_q \ell+\delta_q^{\frac12}\lambda_q \mu^{-1}\big)\leq C(N)\delta_{q+1}^{\frac12}\ell^{-N}, \quad \forall N\geq 1.\nonumber
\end{align}
Notice that
\begin{align}
(\partial_t+v_{\ell}\cdot \nabla)\Phi_{l}=0, \quad
(\partial_t+v_{\ell}\cdot \nabla)\nabla^\bot \Phi_l=-\sum_{j=1}^2(\nabla^\bot v_{\ell}^j) \partial_j \Phi_{l}.\nonumber
\end{align}
Thus, we obtain
\begin{align}
(\partial_t+v_{\ell}\cdot \nabla)L_{kl}=&(\partial_t+v_{\ell}\cdot \nabla) a_{kl}ik-\frac{(\partial_t+v_{\ell}\cdot \nabla) \nabla^{\bot}a_{kl}}{\lambda_{q+1}}\nonumber\\
&-i(\partial_t+v_{\ell}\cdot \nabla)a_{kl}(\nabla^{\bot}\Phi_l-\widetilde{{\rm Id}})k^{\bot}-ia_{kl}\sum_{j=1}^2(\nabla^\bot v_{\ell}^j) \partial_j \Phi_{l}.\nonumber
\end{align}
Moreover,
\begin{align}
(\partial_t+v_{\ell}\cdot \nabla) a_{kl}=-\frac{1}{\sqrt{\rho_l}} \nabla\gamma_k\Big({\rm Id}-\frac{\mathring{R}_{\ell}}{\rho_l}\Big) \cdot (\partial_t+v_{\ell}\cdot \nabla)\mathring{R}_{\ell},\nonumber
\end{align}
thus by (\ref{e:elementary inequality}) and the parameter assumption (\ref{e:parameter assumption 2}), we deduce that
\begin{align}
&\|(\partial_t+v_{\ell}\cdot \nabla) a_{kl}\|_N\nonumber\\
\leq& \frac{C}{\sqrt{\delta_{q+1}}}\Big(\Big\|\gamma_k\Big({\rm Id}-\frac{\mathring{R}_{\ell}}{\rho_l}\Big)\Big\|_N \|(\partial_t+v_{\ell}\cdot \nabla)\mathring{R}_{\ell}\|_0+\Big\|\gamma_k\Big({\rm Id}-\frac{\mathring{R}_{\ell}}{\rho_l}\Big)\Big\|_0 \|(\partial_t+v_{\ell}\cdot \nabla)\mathring{R}_{\ell}\|_N\Big)\nonumber\\
\leq& \frac{C(N)}{\sqrt{\delta_{q+1}}}\big(\delta_{q+1}\lambda_q\ell^{-N} +\delta_{q+1}\ell^{-N-1}\big)\leq C(N)\delta^{\frac12}_{q+1}\ell^{-N-1},
\quad \forall N\geq 0.\nonumber
\end{align}
Hence by (\ref{e: estimate on known velocity}), (\ref{e:elementary inequality}) and (\ref{e:parameter assumption 1}), we deduce that
\begin{align}
\|D_tL_{kl}\|_N\leq& \|(\partial_t+v_{\ell}\cdot \nabla) a_{kl}\|_N+\|a_{kl} \nabla v_{\ell} \nabla\Phi_l\|_N\nonumber\\[3pt]
\leq& C(N)\delta^{\frac12}_{q+1}\ell^{-N-1}+ C(N)\big(\|a_{kl}\|_N\|\nabla v_{\ell}\|_0\|\nabla \Phi_l\|_0+\|a_{kl}\|_0\|\nabla v_{\ell}\|_N\|\nabla \Phi_l\|_0\nonumber\\[3pt]
& \quad \quad \quad \quad +\|a_{kl}\|_0\|\nabla v_{\ell}\|_0\|\nabla \Phi_l\|_N\big)\nonumber\\
\leq &C(N)\delta^{\frac12}_{q+1}\ell^{-N-1}+ C(N)\delta_q^{\frac12}\delta_{q+1}^{\frac12}\lambda^2_q \ell^{1-N}
+C(N)\delta_q^{\frac12}\delta_{q+1}^{\frac12}\lambda_q \ell^{-N}\nonumber\\[3pt]
\leq& C(N)\delta_{q+1}^{\frac12} \ell^{-N-1}. \nonumber
\end{align}

{\bf Estimate on $\phi_{kl}$:} By (\ref{e:elementary inequality}) and (\ref{e:estimate on transport phase}), we obtain
\begin{align}
\|\phi_{kl}\|_1\leq& C\lambda_{q+1}\|\nabla \Phi_l-{\rm Id}\|_0\leq C\delta_q^{\frac12}\lambda_q \lambda_{q+1}\mu^{-1}, \nonumber\\[3pt]
\|\phi_{kl}\|_N\leq& C(N)\big(\lambda_{q+1}\|\nabla\Phi_l\|_{N-1}+(\lambda_{q+1}\|\nabla \Phi_l-{\rm Id}\|_0)^N\big)\nonumber\\
\leq & C(N)\lambda_{q+1}\delta_q^{\frac12}\lambda_q\mu^{-1} \ell^{-N+1}+C(N)\big(\delta_q^{\frac12}\lambda_q\lambda_{q+1}\mu^{-1}\big)^N, \quad \forall N\geq 2.\nonumber
\end{align}
Furthermore, by (\ref{e:parameter assumption 1}), we arrive at
\begin{align}
\|\phi_{kl}\|_N
\leq C(N)\lambda_{q+1}^{N(1-\beta)}, \quad \forall N\geq 1.\nonumber
\end{align}

{\bf Estimate on $w_o, w_c$:} Recalling (\ref{e:another formulation for main perturbation}), by (\ref{e:estimate on various amplitude}), we deduce that
\begin{align}
\|w_o\|_1\leq& C \sum_{l\in {\rm Z}}\sum_{k\in \Lambda_{(l)}} \chi_l(t)\|a_{kl} \phi_{kl}e^{i\lambda_{q+1} k^\perp\cdot x}\|_1\nonumber\\
\leq &C \sum_{l\in {\rm Z}}\sum_{k\in \Lambda_{(l)}} \chi_l(t)\big(\|a_{kl}\|_1+ \|a_{kl}\|_0\|\phi_{kl}\|_1+\|a_{kl}\|_0\lambda_{q+1}\big)\nonumber\\
\leq & \frac{M}{2}\delta_{q+1}^{\frac12}\lambda_{q+1}+C\delta_{q+1}^{\frac12}\lambda_{q+1}^{1-\beta}, \nonumber\\
\|w_o\|_N\leq& C \sum_{l\in {\rm Z}}\sum_{k\in \Lambda_{(l)}} \chi_l(t)\|a_{kl} \phi_{kl}e^{i\lambda_{q+1} k^\perp\cdot x}\|_N\nonumber\\
\leq & C(N) \sum_{l\in {\rm Z}}\sum_{k\in \Lambda_{(l)}} \chi_l(t)\big(\|a_{kl}\|_N+ \|a_{kl}\|_0\|\phi_{kl}\|_N+\|a_{kl}\|_0\lambda^N_{q+1}\big)\nonumber\\
\leq& C(N)\delta_{q+1}^{\frac12}\lambda^N_{q+1}, \quad N\geq 2. \nonumber
\end{align}
Recalling (\ref{d:difinition of corrctor}) and (\ref{e:parameter assumption 1}), we know that
\begin{align}
\|w_c\|_N\leq& C(N) \sum_{l\in {\rm Z}}\sum_{k\in \Lambda_{(l)}} \chi_l(t)\lambda_{q+1}^{-1}\Big(\|a_{kl}\|_{N+1}+\lambda_{q+1}\|a_{kl}(\nabla \Phi_l-{\rm Id})\|_{N}\nonumber\\
&\quad \quad \quad +\big[\|a_{kl}\|_{1}+\lambda_{q+1}\|a_{kl}(\nabla \Phi_l-{\rm Id})\|_0\big]\|e^{i\lambda_{q+1}\Phi_l\cdot k^\bot}\|_N\Big)\nonumber\\
\leq & C(N) \lambda_{q+1}^{-1}\Big(\delta_{q+1}^{\frac12}\lambda_q\ell^{-N}
+\lambda_{q+1}\delta_{q+1}^{\frac12}\ell^{-N}\lambda_{q}\delta_{q}^{\frac12}\mu^{-1}
+\lambda_{q+1}\delta_{q+1}^{\frac12}\lambda_{q}\delta_{q}^{\frac12}\mu^{-1}\lambda_{q+1}^{N}\Big)\nonumber\\
\leq & C(N)\delta_{q+1}^{\frac12}\frac{\delta_q^{\frac12}\lambda_q}{\mu}\lambda^N_{q+1}, \quad N\geq 0. \nonumber
\end{align}
\end{proof}

\subsection{Estimate on temperature perturbation}
Next, we estimate the difference of temperature $\theta_1-\theta$. A direct computation gives that
\begin{align}\label{e:equation for temperature difference}
\left\{
\begin{array}{ll}
\partial_t (\theta_1-\theta) + v_1\cdot\nabla (\theta_1-\theta)-\triangle (\theta_1-\theta)=-(v_1-v)\cdot\nabla\theta,\\[3pt]
(\theta_1-\theta)(0,x)=0.
\end{array}
\right.
\end{align}
Taking the $L^2$ inner product with $\theta_1-\theta$ and integrating by parts, we arrive at
\begin{align}
& \|(\theta_1-\theta)(t,\cdot)\|_{L^2}^2+2\int_0^t\|\nabla(\theta_1-\theta)(s,\cdot)\|_{L^2}^2ds\nonumber\\
=&2\int_0^t\int_{{\rm T}^2}(v_1-v)\cdot\nabla(\theta_1-\theta)\theta(s,x) dxds\nonumber\\
\leq& \int_0^t\|\nabla(\theta_1-\theta)(s,\cdot)\|_{L^2}^2ds+4\|\theta\|^2_0 \|v_1-v\|^2_0.\nonumber
\end{align}
Thus, by (\ref{e:estimate on temperature 1}), there holds
\begin{align}\label{e:energy difference estimate 0}
\|(\theta_1-\theta)(t,\cdot)\|_{L^2}^2+\int_0^t\|\nabla(\theta_1-\theta)(s,\cdot)\|_{L^2}^2ds\leq 4\|\theta^0\|^2_0 \|v_1-v\|^2_0.
\end{align}
Moreover,
\begin{align}
-(v_1-v)\cdot\nabla\theta=-\sum_l \sum_{k\in \Lambda_{(l)}}\chi(t)\phi_{kl}L_{kl}\cdot \nabla\theta e^{i\lambda_{q+1} k^\bot\cdot x}:=-\sum_l \sum_{k\in \Lambda_{(l)}}b_{kl} e^{i\lambda_{q+1} k^\bot\cdot x}. \nonumber
\end{align}
By (\ref{e:estimate on general source term}), to establish decay estimate for $\|\theta_1-\theta\|_{L^\infty L^2}$, we need the estimates $\|\nabla^N\theta\|_{L^\infty L^2_x}$. To this end, we establish the following lemma.
\begin{lemma}
Let $\theta$ be a smooth solution of the following transport-diffusion equation
\begin{align}\label{e:transport-diffusion equation for Cauchy problem}
\left\{
\begin{array}{ll}
\partial_t\theta+ v\cdot\nabla \theta- \triangle \theta=0, \\[3pt]
{\rm div}v=0,\\[3pt]
\theta(0,x)=\theta^0(x).
\end{array}
\right.
\end{align}
Then, there holds
\begin{align}\label{e:estimate on various temperature}
\|\nabla\theta\|_{L^\infty L^2}\leq& C\|v\|_0^2 \|\theta^0\|_2, \nonumber\\
 \|\nabla^N\theta\|_{L^\infty L^2}\leq&  C(N,\|v\|_0, \|\theta^0\|_2) \sum_{k=1}^{N-1}\|\nabla^k v\|_0^{\frac{N}{k+1}}, \quad \forall N\geq 2.
\end{align}
\begin{proof}
{\bf Step 1:}
Acting $\partial_j$ on both sides of (\ref{e:transport-diffusion equation for Cauchy problem}), we obtain
\begin{align}\label{e:transport equation for j-derivative}
\partial_t\partial_j\theta+ v\cdot\nabla(\partial_j\theta)- \triangle \partial_j\theta=-\partial_jv\cdot\nabla \theta.
\end{align}
Taking $L^2$ inner products with $\partial_j\theta$ and integrating by parts, we deduce that
\begin{align}
\frac12 \frac{d}{dt}\|\partial_j\theta(t,\cdot)\|_{L^2}^2+ \|\nabla\partial_j\theta(t,\cdot)\|_{L^2}^2
	\leq &\Big|\int_{{\rm T}^2}[v\cdot \nabla\partial_j\theta\partial_j\theta+v\cdot\nabla\theta \partial_{jj}\theta]dx \Big| \nonumber\\
\leq & \|v\|_0 \|\nabla\theta(t,\cdot)\|_{L^2}\|\partial_{jj}\theta(t,\cdot)\|_{L^2}.\nonumber
\end{align}
Summing about $j=1,2$, we arrive at
\begin{align}
\frac12 \frac{d}{dt}\|\nabla\theta(t,\cdot)\|_{L^2}^2+ \|\nabla^2\theta(t,\cdot)\|_{L^2}^2
	\leq  \|v\|_0 \|\nabla\theta(t,\cdot)\|_2\|\triangle\theta(t,\cdot)\|_2.\nonumber
\end{align}
Moreover, by the interpolation inequality $\|\nabla\theta(t,\cdot)\|_{L^2}\leq \|\theta(t,\cdot)\|^{\frac12}_{L^2} \|\nabla^2\theta(t,\cdot)\|^{\frac12}_{L^2}$, we arrive at
\begin{align}
\frac12 \frac{d}{dt}\|\nabla\theta(t,\cdot)\|_{L^2}^2+ \|\nabla^2\theta(t,\cdot)\|_{L^2}^2
	\leq  \|v\|_0 \|\theta(t,\cdot)\|^{\frac12}_{L^2}\|\nabla^2\theta(t,\cdot)\|^{\frac32}_{L^2}.\nonumber
\end{align}
Thus, by the H\"{o}lder inequality and noticing the fact $\|\theta(t,\cdot)\|_{L^2}\leq\|\theta^0\|_2$, we obtain
\begin{align}\label{e:estimate on gradient of temperature}
\|\nabla\theta\|_{L^\infty L^2}\leq C\|v\|_0^2 \|\theta^0\|_2,
\end{align}
which gives the first estimate in (\ref{e:estimate on various temperature}).

{\bf Step 2:}
For $N\geq 2$, acting $\nabla^N $ on both sides of (\ref{e:transport equation for j-derivative}), taking $L^2$ inner products with $\nabla^N \theta$ and integrating by parts, we obtain
\begin{align}
\frac12 \frac{d}{dt}\|\nabla^N\theta(t,\cdot)\|_{L^2}^2+ \|\nabla^{N+1}\theta(t,\cdot)\|_{L^2}^2
	\leq & \sum_{k=1}^{N-1}C_N^k\|\nabla^k v\|_0 \|\nabla^{N+1-k}\theta(t,\cdot)\|_{L^2}\|\nabla^{N}\theta(t,\cdot)\|_{L^2} \nonumber\\
&+ \Big|\int_{{\rm T}^2}\nabla^N v\cdot \nabla \theta \cdot \nabla^N\theta\Big|.\nonumber
\end{align}
Integrating by parts and using interpolation inequality
\begin{align}
\|\nabla^m\theta\|_{L^2}\leq \|\nabla \theta\|^{\frac{N+1-m}{N}}_{L^2}\|\nabla^{N+1}\theta\|^{\frac{m-1}{N}}_{L^2}, \quad 2\leq m\leq N, \nonumber
\end{align}
we deduce that
\begin{align}
\Big|\int_{{\rm T}^2}\nabla^N v\cdot \nabla \theta \cdot \nabla^N\theta\Big|\leq& C \|\nabla^{N-1}v\|_0 \|\nabla \theta\|_{L^2}\|\nabla^{N+1}\theta\|_{L^2},\nonumber\\
\|\nabla^{N+1-k}\theta(t,\cdot)\|_{L^2}\|\nabla^{N}\theta(t,\cdot)\|_{L^2}\leq & \|\nabla \theta(t,\cdot)\|_{L^2}^{\frac{k+1}{N}}\|\nabla^{N+1}\theta(t,\cdot)\|^{\frac{2N-k-1}{N}}_{L^2}.\nonumber
\end{align}
Thus, by Young inequality, we obtain
\begin{align}
\frac12 \frac{d}{dt}\|\nabla^N\theta(t,\cdot)\|_{L^2}^2+ \|\nabla^{N+1}\theta(t,\cdot)\|_{L^2}^2
	\leq & C(N)\sum_{k=1}^{N-1}\|\nabla^k v\|_0^{\frac{2N}{k+1}}\|\nabla\theta(t,\cdot)\|^2_2+ \frac12\|\nabla^{N+1}\theta(t,\cdot)\|_{L^2}^2.\nonumber
\end{align}
Combining the estimate (\ref{e:estimate on gradient of temperature}), we obtain
\begin{align}
\|\nabla^N\theta\|_{L^\infty L^2}\leq C(N)\|v\|_0^2 \|\theta^0\|_2\sum_{k=1}^{N-1}\|\nabla^k v\|_0^{\frac{N}{k+1}}, \nonumber
\end{align}
which is the second estimate in (\ref{e:estimate on various temperature}).
\end{proof}
\end{lemma}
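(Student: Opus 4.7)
The plan is to prove both estimates by higher-order energy methods applied to the differentiated transport-diffusion equation, exploiting the divergence-free structure of $v$ together with interpolation and Young's inequality. The argument splits naturally into a base case $N=1$ and an inductive higher-order estimate; in both cases the strategy is to commute derivatives past the transport operator, absorb the top-order dissipation $\|\nabla^{N+1}\theta\|_{L^2}^2$ on the left-hand side, and leave residual factors controlled only by $\|v\|_0$-type norms and the already-controlled $\|\nabla\theta\|_{L^\infty L^2}$.

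\textbf{Base case $N=1$.} First I would apply $\partial_j$ to the equation, producing an inhomogeneous transport-diffusion equation for $\partial_j\theta$ with forcing $-\partial_j v\cdot\nabla\theta$, and then test against $\partial_j\theta$ in $L^2$. The transport contribution vanishes thanks to $\mathrm{div}\,v=0$. The forcing term, after an integration by parts in $x_j$ and using $\mathrm{div}\,v=0$ once more, can be rewritten as $\int v\cdot\nabla\theta\,\triangle\theta\,dx$, which is bounded by $\|v\|_0\|\nabla\theta\|_{L^2}\|\nabla^2\theta\|_{L^2}$. Combining with the interpolation $\|\nabla\theta\|_{L^2}\le \|\theta\|_{L^2}^{1/2}\|\nabla^2\theta\|_{L^2}^{1/2}$, applying Young to absorb $\|\nabla^2\theta\|_{L^2}^2$ into the parabolic dissipation, and using the $L^2$ conservation $\|\theta(t,\cdot)\|_{L^2}\le \|\theta^0\|_{L^2}$, I obtain $\|\nabla\theta\|_{L^\infty L^2}\le C\|v\|_0^2\|\theta^0\|_2$ after integrating in time.

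\textbf{Higher-order step $N\ge 2$.} For the inductive estimate, apply $\nabla^N$ to the equation and test against $\nabla^N\theta$ in $L^2$. The top-order transport term again drops by $\mathrm{div}\,v=0$, and the Leibniz expansion produces commutator terms of the form $\nabla^k v\cdot \nabla^{N+1-k}\theta$ paired with $\nabla^N\theta$ for $1\le k\le N$. The $k=N$ term I would handle by one integration by parts, trading $\nabla^N v$ for $\nabla^{N-1}v$ and arriving at a bound by $\|\nabla^{N-1}v\|_0\|\nabla\theta\|_{L^2}\|\nabla^{N+1}\theta\|_{L^2}$. For $1\le k\le N-1$, I would use the Gagliardo--Nirenberg-style interpolation
\[
\|\nabla^m\theta\|_{L^2}\le \|\nabla\theta\|_{L^2}^{(N+1-m)/N}\|\nabla^{N+1}\theta\|_{L^2}^{(m-1)/N},\qquad 2\le m\le N,
\]
to rewrite each commutator as $\|\nabla^k v\|_0\,\|\nabla\theta\|_{L^2}^{(k+1)/N}\,\|\nabla^{N+1}\theta\|_{L^2}^{(2N-k-1)/N}$, and then apply a weighted Young inequality with weights $\beta=(2N-k-1)/(2N)$ on the second factor so that $\|\nabla^{N+1}\theta\|_{L^2}^2$ is absorbed into the dissipation. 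The residual term is $\|\nabla^k v\|_0^{2N/(k+1)}\|\nabla\theta\|_{L^2}^2$, and a Gronwall argument combined with the $N=1$ bound on $\|\nabla\theta\|_{L^\infty L^2}$ (which already contains $\|v\|_0^2\|\theta^0\|_2$) yields the claimed estimate with exponent $N/(k+1)$ after taking square roots.

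\textbf{Main obstacle.} The only real subtlety is exponent bookkeeping in the final Young step: one must choose the weight so that the power of $\|\nabla^{N+1}\theta\|_{L^2}$ collapses to exactly $2$ while the residual power of $\|\nabla^k v\|_0$ equals $2N/(k+1)$, which after taking the square root of the integrated estimate produces the $N/(k+1)$ stated in the lemma. Everything else is a standard higher-order energy estimate for a parabolic equation with divergence-free drift, and no new ingredients are required.
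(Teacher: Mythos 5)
Your proposal reproduces the paper's argument step by step: the same $\partial_j$-differentiation and $L^2$ pairing with $\partial_j\theta$ in the base case, the same interpolation $\|\nabla\theta\|_{L^2}\le\|\theta\|_{L^2}^{1/2}\|\nabla^2\theta\|_{L^2}^{1/2}$ and absorption into dissipation, and in the inductive step the same Leibniz decomposition, single integration by parts on the $k=N$ term, Gagliardo--Nirenberg interpolation $\|\nabla^m\theta\|_{L^2}\le\|\nabla\theta\|_{L^2}^{(N+1-m)/N}\|\nabla^{N+1}\theta\|_{L^2}^{(m-1)/N}$, and weighted Young with the same exponents. This is the paper's proof, correctly reconstructed.
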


\begin{Corollary}
Under the assumption (\ref{e: estimate on known velocity}), for any $N\geq 2$, there holds
\begin{align}\label{e:estimate on n-derivative on velocity}
\|\nabla^N\theta\|_{L^\infty L^2}\leq C(N)\delta_q^{\frac12}\lambda_q^{N-1}.
\end{align}
\end{Corollary}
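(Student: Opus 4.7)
The plan is to apply the second estimate of the preceding lemma directly and then bound each term in the resulting sum using the inductive hypothesis (\ref{e: estimate on known velocity}). Since $\theta^0$ is fixed in Proposition \ref{p: iterative 1} and $\|v\|_0 \leq 2M$ by (\ref{e: estimate on known velocity}), the prefactor $C(N,\|v\|_0,\|\theta^0\|_2)$ from the lemma can be absorbed into a constant $C(N)$ depending only on $N$ (and on the fixed data $e, \theta^0$).

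The main computation is then to track exponents. Substituting $\|\nabla^k v\|_0 \leq C(k)\delta_q^{1/2}\lambda_q^k$ into the sum yields, for $1 \leq k \leq N-1$,
\begin{align*}
\|\nabla^k v\|_0^{\frac{N}{k+1}} \leq C(k,N)\,\delta_q^{\frac{N}{2(k+1)}}\,\lambda_q^{\frac{kN}{k+1}}.
\end{align*}
Two elementary observations finish the argument. First, since $a$ is chosen sufficiently large, $\delta_q = a^{-b^q} \leq 1$, and for $k \leq N-1$ we have $\tfrac{N}{2(k+1)} \geq \tfrac{1}{2}$, so $\delta_q^{N/(2(k+1))} \leq \delta_q^{1/2}$. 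Second, $\lambda_q \geq 1$, and the inequality $\tfrac{kN}{k+1} \leq N-1$ for $k \leq N-1$ (with equality only at $k = N-1$) gives $\lambda_q^{kN/(k+1)} \leq \lambda_q^{N-1}$. Hence each term in the sum is bounded by $C(k,N)\delta_q^{1/2}\lambda_q^{N-1}$, and summing over the finitely many $k$ produces the claimed bound with a constant depending only on $N$.

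There is essentially no obstacle here; the corollary is just the composition of the $L^\infty L^2$ higher-derivative estimate from the lemma with the a priori bounds on $v_q$. The only point worth noting is the sharpness at $k = N-1$: it is precisely this term that saturates the exponent $N-1$ on $\lambda_q$, which is why no smaller power of $\lambda_q$ appears in the final statement.
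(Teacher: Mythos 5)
Your proof is correct and follows the same route as the paper's: apply the second estimate of the preceding lemma, substitute the inductive bound $\|\nabla^k v\|_0\leq C(k)\delta_q^{1/2}\lambda_q^k$, and observe that the largest contribution (at $k=N-1$) gives $\delta_q^{1/2}\lambda_q^{N-1}$. The paper states the same intermediate bound and conclusion without spelling out the exponent comparisons $\tfrac{N}{2(k+1)}\geq\tfrac12$ and $\tfrac{kN}{k+1}\leq N-1$, which you supply; otherwise the arguments are identical.
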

\begin{proof}
By (\ref{e: estimate on known velocity}) and (\ref{e:estimate on various temperature}), we deduce that
\begin{align}
\|\nabla^N\theta\|_{L^\infty L^2}\leq C(N,\|v\|_0^2, \|\theta^0\|_2)\sum_{k=1}^{N-1}\delta_q^{\frac{N}{2(k+1)}}\lambda_q^{\frac{Nk}{k+1}}\leq C(N)\delta_q^{\frac12}\lambda_q^{N-1}.\nonumber
\end{align}
\end{proof}

\begin{Proposition}
There holds
\begin{align}\label{e:estimate on gradient of temperature difference}
\|\nabla^N(\theta_1-\theta)\|_{L^\infty L^2_x}\leq& C\delta_{q+1}^{\frac12}\lambda^{N-1}_{q+1},\quad N\geq 0.
\end{align}
Furthermore, there holds
\begin{align}\label{e:fractional derivative estimate on temperature difference}
\|\theta_1-\theta\|_{L^\infty \dot{H}^\varepsilon_x}\leq& C \delta_{q+1}^{\frac12}\lambda_{q+1}^{\varepsilon-1}, \quad \varepsilon\geq 0.
\end{align}
\end{Proposition}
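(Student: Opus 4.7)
The plan is to view (\ref{e:equation for temperature difference}) as an instance of the oscillatory transport--diffusion equation (\ref{e:transport-diffusion equation with decay source}) and to apply the gain-of-derivative estimate (\ref{e:estimate on general source term}). Using the representation (\ref{f:another formulation for full perturbation}) of $w=v_1-v$, the source term reads
\begin{align*}
-(v_1-v)\cdot\nabla\theta=-\sum_{l}\sum_{k\in\Lambda_{(l)}}b_{kl}(t,x)\,e^{i\lambda_{q+1}k^{\perp}\cdot x},\qquad b_{kl}(t,x):=\chi_l(t)\,\phi_{kl}\,L_{kl}\cdot\nabla\theta,
\end{align*}
which is exactly the oscillatory form required by (\ref{e:estimate on general source term}), with frequency $\lambda_{q+1}$ and unit wavevectors $k^{\perp}$. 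The reality condition $\overline{b_{kl}}=b_{-k,l}$ holds automatically since the amplitudes $a_{kl}$, $L_{kl}$ and phases $\Phi_l$ are built from real quantities.

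First I would handle the case $N=0$ via the first line of (\ref{e:estimate on general source term}). The amplitude bounds (\ref{e:estimate on various amplitude}), together with (\ref{e:estimate on gradient of temperature}) applied to the current iterate $\theta$, give $\|b_{kl}\|_{L^\infty L^2}\le C\delta_{q+1}^{1/2}$, which contributes the leading term $C\delta_{q+1}^{1/2}\lambda_{q+1}^{-1}$. For the remainder, the Leibniz rule combined with (\ref{e:estimate on various amplitude}), (\ref{e:estimate on n-derivative on velocity}), and the parameter condition (\ref{e:parameter assumption 1}) (which under the choice of $b,c,\beta$ gives $\ell^{-1},\lambda_q\le\lambda_{q+1}^{1-\beta}$) yields $\|\nabla^{M+1}b_{kl}\|_{L^\infty L^2}\le C(M)\,\delta_{q+1}^{1/2}\,\lambda_{q+1}^{(M+1)(1-\beta)}$. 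Choosing $M$ large enough that $(M+1)\beta\ge 1$ absorbs this term into the leading contribution, producing the $N=0$ case $\|\theta_1-\theta\|_{L^\infty L^2}\le C\delta_{q+1}^{1/2}\lambda_{q+1}^{-1}$.

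Next I would treat $N\ge 1$ via the second estimate in (\ref{e:estimate on general source term}), applied with $v$ replaced by $v_1$. The velocity bounds $\|v_1\|_0\le C$ and $\|\nabla^i v_1\|_0\le C\delta_{q+1}^{1/2}\lambda_{q+1}^{i}$ for $i\ge 1$, deduced from (\ref{e: estimate on known velocity}) and (\ref{e:estimate on velocity perturbation}), make the weighted sum $\sum_{i=0}^{N-1}\|\nabla^i v_1\|_0^{(N+1)/(i+1)}$ dominated by the $i=N-1$ term and bounded by $C\lambda_{q+1}^{N-1/N}$; multiplied by the $L^\infty L^2$ bound from the previous step, this contribution is strictly smaller than the target $\delta_{q+1}^{1/2}\lambda_{q+1}^{N-1}$. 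The remaining source term $\|(\nabla^{N-1}b_{kl},\lambda_{q+1}^{N-1}b_{kl})\|_{L^\infty L^2}$ is controlled, by the same Leibniz argument, by $C\delta_{q+1}^{1/2}\lambda_{q+1}^{N-1}$ (the factor $\lambda_{q+1}^{N-1}\|b_{kl}\|_{L^\infty L^2}$ matches the target exactly, while $\|\nabla^{N-1}b_{kl}\|_{L^\infty L^2}$ is smaller by a power $\lambda_{q+1}^{-\beta(N-1)}$). This proves (\ref{e:estimate on gradient of temperature difference}). The fractional bound (\ref{e:fractional derivative estimate on temperature difference}) then follows from Sobolev interpolation between $\dot H^{\lfloor\varepsilon\rfloor}$ and $\dot H^{\lfloor\varepsilon\rfloor+1}$, which collapses the exponents to exactly $\varepsilon-1$.

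The main technical hurdle lies in the derivative bookkeeping of step two: at every stage one needs to verify that each additional spatial derivative applied to $b_{kl}=\chi_l\phi_{kl}L_{kl}\cdot\nabla\theta$, or to the velocity $v_1$, costs at most $\lambda_{q+1}^{1-\beta}$, using the full strength of (\ref{e:parameter assumption 1}) together with the rapid growth ratio $\lambda_{q+1}/\lambda_q$ secured by $b>1/(1-\beta)$. This $\lambda_{q+1}^{-\beta}$ gain per derivative is precisely what forces the integer $M$ to be a universal constant depending only on $\beta$, so that the remainder in (\ref{e:estimate on general source term}) can be absorbed uniformly along the iteration.
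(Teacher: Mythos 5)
Your proposal is correct and follows essentially the same route as the paper: both apply the oscillatory transport--diffusion estimate (\ref{e:estimate on general source term}) to (\ref{e:equation for temperature difference}) with source amplitudes $b_{kl}=\chi_l\phi_{kl}L_{kl}\cdot\nabla\theta$, deduce the $N=0$ case from the first line of that estimate and the $N\ge 1$ cases from the second, and obtain (\ref{e:fractional derivative estimate on temperature difference}) by interpolation. One small point in your favor: you correctly read off $v_1$ as the transport velocity in (\ref{e:equation for temperature difference}) and insert $\|\nabla^i v_1\|_0$ into the weighted sum, whereas the paper's displayed computation uses $\|\nabla^i v\|_0$ (a minor slip that does not affect the final bound since both give a contribution strictly below the target).
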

\begin{proof}
{\bf Estimate on $\|\theta_1-\theta\|_{L^\infty L^2_x}$:}
Recalling $b_{kl}=\chi_l(t)\phi_{kl}L_{kl}\cdot \nabla\theta$, thus by (\ref{e: estimate on known velocity}), (\ref{e:estimate on various amplitude}), (\ref{e:estimate on various temperature}), (\ref{e:estimate on n-derivative on velocity}) and the parameter assumption (\ref{e:parameter assumption 1}), we deduce that for any $l\in Z$ and $k\in \Lambda_0\cup \Lambda_1$, there holds
\begin{align}\label{e:estimate on spatial bkl}
\|b_{kl}\|_{L^\infty L^2_x}\leq& C \delta^{\frac12}_{q+1},\nonumber\\
\|\nabla^N b_{kl}\|_{L^\infty L^2_x}\leq& C(N)\Big(\|L_{kl}\|_N \|\phi_{kl}\|_0 \|\nabla \theta\|_{L^\infty L^2_x}\nonumber\\
& \quad \quad \quad \quad \quad+\|L_{kl}\|_0 \|\phi_{kl}\|_N \|\nabla \theta\|_{L^\infty L^2_x}+\|L_{kl}\|_0 \|\phi_{kl}\|_0 \|\nabla^N\nabla \theta\|_{L^\infty L^2_x}\Big)\nonumber\\
\leq & C(N) \delta_{q+1}^{\frac12}\big(\ell^{-N}+\delta_q^{\frac12}\lambda_q\lambda_{q+1}\mu^{-1}\ell^{-(N-1)}+(\delta^{\frac12}_q\lambda_q \lambda_{q+1}\mu^{-1})^N+\delta_q^{\frac12}\lambda_q^N\big)\nonumber\\
\leq & C(N) \delta_{q+1}^{\frac12}\big(\ell^{-N}+\delta_q^{\frac12}\lambda_q\lambda_{q+1}\mu^{-1}\ell^{-(N-1)}+(\delta^{\frac12}_q\lambda_q \lambda_{q+1}\mu^{-1})^N\big).
\end{align}
Thus, by (\ref{e:estimate on general source term}) and the parameter assumption (\ref{e:parameter assumption 1}), we obtain
\begin{align}
\|\theta_1-\theta\|_{L^\infty L^2_x}\leq C(N)\delta_{q+1}^{\frac12}\lambda_{q+1}^{-1}+ C(N)\frac{\delta_{q+1}^{\frac12}\lambda_{q+1}^{(N+1)(1-\beta)}}{\lambda_{q+1}^{N+1}}\leq C(N)\delta_{q+1}^{\frac12}(\lambda_{q+1}^{-1}+\lambda_{q+1}^{-N\beta}). \nonumber
\end{align}
Taking $N$ large enough such that $N\beta\geq 1$, we arrive at
\begin{align}
\|\theta_1-\theta\|_{L^\infty L^2_x}\leq C\delta_{q+1}^{\frac12}\lambda_{q+1}^{-1}. \nonumber
\end{align}

{\bf Estimate on $\|\nabla^N(\theta_1-\theta)\|_{L^\infty L^2_x}$($N\geq 1$):}
Recalling (\ref{e: estimate on known velocity}), we deduce that
\begin{align}
\sum_{k=0}^{N-1}\|\nabla^k v\|_0^{\frac{N+1}{k+1}}\leq C\sum_{k=0}^{N-1} \delta_q^{\frac{N+1}{2(k+1)}}\lambda_q^{\frac{k(N+1)}{k+1}}
\leq C\delta_q^{\frac12}\lambda_q^N.\nonumber
\end{align}
By (\ref{e:estimate on spatial bkl}) and the parameter assumption (\ref{e:parameter assumption 1}), we deduce that
\begin{align}
\|\nabla^{N-1} b_{kl}\|_{L^\infty L^2}\leq C(N)\delta_{q+1}^{\frac12}\lambda_{q+1}^{(N-1)(1-\beta)}.\nonumber
\end{align}
Thus, by (\ref{e:estimate on general source term}) and the parameter assumption (\ref{e:parameter assumption 1}), we obtain
\begin{align}
\|\nabla^N(\theta_1-\theta)\|_{L^\infty L^2_x}\leq C(N)\delta_{q+1}^{\frac12}\lambda_{q+1}^{N-1},\nonumber
\end{align}
this completes the proof.
\end{proof}

\setcounter{equation}{0}

\section{Estimate on the Reynold stress}

In this section, we prove the following estimate for the Reynold stress.

\begin{Proposition}
The Reynold stress $\mathring{R}_1$ defined in (\ref{d:difinition of Reynold stress}) satisfies the estimate
\begin{align}\label{e:estimate on the new Reynold stress}
\|\mathring{R}_1\|_0+\frac{\|\mathring{R}_1\|_1}{\lambda_{q+1}}
\leq C(\varepsilon) \delta_{q+1}^{\frac12}\Big((\mu+\ell^{-1})\lambda_{q+1}^{-1+\varepsilon}+\delta_q^{\frac12}\lambda_q\ell
+\delta_q^{\frac12}\lambda_q \lambda_{q+1}^\varepsilon \mu^{-1}\Big).
\end{align}
\end{Proposition}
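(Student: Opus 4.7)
The plan is to estimate each of the seven pieces $R^0,\dots,R^6$ of $\mathring{R}_1$ separately in $C^0$ and $C^1$, absorbing each contribution into the right-hand side of \eqref{e:estimate on the new Reynold stress}. Throughout, the bounds \eqref{e:estimate on convolution 1}--\eqref{e:estimate on convolution 2} on the mollified quantities, the perturbation estimates \eqref{e:estimate on transport phase}--\eqref{e:estimate on velocity perturbation}, the temperature difference bounds \eqref{e:estimate on gradient of temperature difference}--\eqref{e:fractional derivative estimate on temperature difference}, and the two properties of $\mathcal R$ (the stationary-phase gain \eqref{e:oscillatory estimate} and the Sobolev bound \eqref{e:L infinity L2 bound for anti-divergence}) will be the only ingredients. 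The parameter assumption \eqref{e:parameter assumption 1} will be invoked repeatedly to convert mixed expressions into the three model terms $(\mu+\ell^{-1})\lambda_{q+1}^{-1+\varepsilon}$, $\delta_q^{1/2}\lambda_q\ell$ and $\delta_q^{1/2}\lambda_q\lambda_{q+1}^{\varepsilon}\mu^{-1}$.

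For the non-oscillatory pieces, I first handle $R^6=\mathring R-\mathring R_\ell$ directly through \eqref{e:estimate on convolution 1}--\eqref{e:estimate on convolution 2}, giving a contribution bounded by $\delta_{q+1}^{1/2}\delta_q^{1/2}\lambda_q\ell$. For $R^4$ (the corrector-type quadratic terms) and $R^5$ (the commutator with $v-v_\ell$), I use the product rule \eqref{e:elementary inequality} together with the corrector bound $\|w_c\|_N\le C\delta_{q+1}^{1/2}\delta_q^{1/2}\lambda_q\mu^{-1}\lambda_{q+1}^N$ and the mollification estimates; both fit into the $\delta_q^{1/2}\lambda_q\lambda_{q+1}^{\varepsilon}\mu^{-1}$ and $\delta_q^{1/2}\lambda_q\ell$ budgets. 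For $R^2=\mathcal R((\theta_1-\theta)e_2)$, the anti-divergence operator is applied via \eqref{e:L infinity L2 bound for anti-divergence} with $s=\varepsilon$, so \eqref{e:fractional derivative estimate on temperature difference} directly yields a contribution of order $\delta_{q+1}^{1/2}\lambda_{q+1}^{-1+\varepsilon}$.

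The oscillatory terms $R^0$, $R^1$ and $R^3$ are the heart of the argument. In all three, every summand has the form $b(t,x)e^{i\lambda_{q+1}(k+k')^{\perp}\cdot x}$ with low-frequency amplitude $b$; this is where \eqref{e:oscillatory estimate} with the large parameter $\lambda_{q+1}|k+k'|\gtrsim \lambda_{q+1}c_0$ (Remark on the geometric constant $c_0$) provides the decisive gain. For $R^0$ I expand $\partial_t w+v_\ell\cdot\nabla w$ using \eqref{f:another formulation for full perturbation}: the key observation is that $(\partial_t+v_\ell\cdot\nabla)\phi_{kl}=0$ by the phase equation \eqref{e:transport phase equation}, so the only surviving contribution is $\sum \chi_l'L_{kl}\phi_{kl}e^{i\lambda_{q+1}k^\perp\cdot x}+\sum \chi_l(\partial_t+v_\ell\cdot\nabla)L_{kl}\phi_{kl}e^{i\lambda_{q+1}k^\perp\cdot x}$; applying \eqref{e:oscillatory estimate} with $\lambda=\lambda_{q+1}$ and the $L_{kl}$-bounds from \eqref{e:estimate on various amplitude} produces a factor $(\mu+\ell^{-1})\lambda_{q+1}^{-1+\varepsilon}$. $R^1$ is similar but easier, because $w\cdot\nabla v_\ell$ has an extra spatial derivative on $v_\ell$. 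For $R^3=\mathcal R(T^1_{osc}+T^2_{osc})$, I use the explicit expressions \eqref{e:definition of T1} and \eqref{d:definiion on second oscillatory}: each term is of the schematic form $\chi_l\chi_{l'}\nabla(a_{kl}a_{k'l'}\phi_{kl}\phi_{k'l'})e^{i\lambda_{q+1}(k+k')^\perp\cdot x}$ with $k+k'\neq 0$, and \eqref{e:oscillatory estimate} with $m$ chosen large enough (so that $\lambda_{q+1}^{-m(1-\beta)+\varepsilon}$ is negligible) turns the $\nabla$ costing $\ell^{-1}$ or $\lambda_{q+1}^{1-\beta}$ into a net gain bounded by $\delta_{q+1}\lambda_{q+1}^{-1+\varepsilon}(\mu+\ell^{-1})$, absorbable into the stated budget.

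The main obstacle, and the step requiring the most care, is $R^3$: one must be sure that after differentiating the product $a_{kl}a_{k'l'}\phi_{kl}\phi_{k'l'}$ up to the order $m$ needed in \eqref{e:oscillatory estimate}, the worst derivative losses $(\ell^{-1})^m$ or $(\lambda_{q+1}^{1-\beta})^m$ are still beaten by the gain $\lambda_{q+1}^{-m+\varepsilon}$; this is exactly where the constraint $\beta>0$ enters, via the bound $\|\phi_{kl}\|_N\le C(N)\lambda_{q+1}^{N(1-\beta)}$ in \eqref{e:estimate on various amplitude}. The $C^1$ estimate on $\mathring R_1$ is obtained by the same scheme: one simply picks up one extra factor of $\lambda_{q+1}$ in each oscillatory bound and of $\ell^{-1}$ in each non-oscillatory bound, which is precisely compensated by the division by $\lambda_{q+1}$ on the left side of \eqref{e:estimate on the new Reynold stress}. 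Summing the seven contributions completes the proof.
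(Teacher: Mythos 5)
Your proposal follows exactly the paper's decomposition of $\mathring{R}_1$ into $R^0,\dots,R^6$ and uses the same toolbox (the oscillatory gain \eqref{e:oscillatory estimate} for $R^0,R^1,R^3$, the Sobolev bound \eqref{e:L infinity L2 bound for anti-divergence} for $R^2$, direct product bounds for $R^4,R^5,R^6$), and the contributions you assign to each slot of the budget are consistent with the paper's computations.

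One notational point you should fix: the material derivative of $\phi_{kl}$ alone does \emph{not} vanish. From $\phi_{kl}=e^{i\lambda_{q+1}k^\perp\cdot(\Phi_l-x)}$ and $(\partial_t+v_\ell\cdot\nabla)\Phi_l=0$ one gets $(\partial_t+v_\ell\cdot\nabla)\phi_{kl}=-i\lambda_{q+1}(k^\perp\cdot v_\ell)\phi_{kl}$, which is of order $\lambda_{q+1}$. What vanishes is $(\partial_t+v_\ell\cdot\nabla)\bigl(\phi_{kl}e^{i\lambda_{q+1}k^\perp\cdot x}\bigr)=(\partial_t+v_\ell\cdot\nabla)e^{i\lambda_{q+1}k^\perp\cdot\Phi_l}=0$, and it is this cancellation of the \emph{full transported phase} that removes the dangerous $O(\lambda_{q+1})$ contribution from $R^0$. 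Your listed surviving terms $\chi_l'L_{kl}\phi_{kl}e^{i\lambda_{q+1}k^\perp\cdot x}+\chi_l(\partial_t+v_\ell\cdot\nabla)L_{kl}\phi_{kl}e^{i\lambda_{q+1}k^\perp\cdot x}$ are the right ones, so the error is one of statement rather than substance, but as written the intermediate claim would produce an uncontrolled term $L_{kl}\phi_{kl}\,v_\ell\cdot\nabla e^{i\lambda_{q+1}k^\perp\cdot x}$.

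A second, more minor imprecision: for $R^3$ the dominant first-derivative cost of $a_{kl}a_{k'l}\phi_{kl}\phi_{k'l}$ is $\|\phi_{kl}\|_1\lesssim\delta_q^{1/2}\lambda_q\lambda_{q+1}\mu^{-1}$ (not $\ell^{-1}$), which after \eqref{e:oscillatory estimate} lands in the third budget slot $\delta_q^{1/2}\lambda_q\lambda_{q+1}^{\varepsilon}\mu^{-1}$ rather than the first; with the particular choice \eqref{e:parameter choosing} the two quantities coincide, but for the proposition one should cite the form that holds under the abstract constraint \eqref{e:parameter assumption 1}.
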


\begin{proof}We deal with it term by term. \\
{\bf Estimate on $\mathcal{R}^0$:} A direct computation gives
\begin{align}
(\partial_t+v_{\ell}\cdot\nabla)w=\sum_{l\in {\rm Z}}\sum_{k\in \Lambda_{(l)}}\Big(\chi_l'(t) L_{kl}+\chi(t)(\partial_t L_{kl}+v_{\ell}\cdot\nabla L_{kl})\Big)\phi_{kl}e^{i\lambda_{q+1} k^\bot\cdot x}.\nonumber
\end{align}
Set
\begin{align}
\Psi_{kl}:=\chi_l'(t) L_{kl}+\chi(t)(\partial_t L_{kl}+v_{\ell}\cdot\nabla L_{kl}), \nonumber
\end{align}
thus, by $|\chi_l'(t)|\leq C\mu$ and (\ref{e:estimate on various amplitude}), we deduce that for any $l\in Z$ and $k\in \Lambda_0\cup \Lambda_1$, $|\mu t-l|< 1$, there hold
\begin{align}
\|\Psi_{kl}\|_N\leq& C\mu \|L_{kl}\|_N+\|\partial_t L_{kl}+v_{\ell}\cdot\nabla L_{kl}\|_N\nonumber\\
 \leq & C(N) \delta_{q+1}^{\frac12}\ell^{-N}\big(\mu+\ell^{-1}\big), \quad \forall N\geq 0. \nonumber
\end{align}
By (\ref{e:oscillatory estimate}), we deduce that
\begin{align}
\|\mathcal{R}^0\|_0 \leq C(N,\varepsilon) & \sum_{l\in {\rm Z}}\sum_{k\in \Lambda_{(l)}}\Big( \frac{\|\Psi_{kl}\|_0}{\lambda^{1-\varepsilon}_{q+1}}
+\frac{\|\Psi_{kl}\|_N}{\lambda_{q+1}^{N-\varepsilon}}+\frac{\|\Psi_{kl}\|_{N+\varepsilon}}{\lambda_{q+1}^N}\Big)\nonumber\\
\leq & C(N,\varepsilon)\Big(\delta_{q+1}^{\frac12}\lambda_{q+1}^{-1+\varepsilon}(\mu+\ell^{-1})+\delta_{q+1}^{\frac12}\ell^{-N}(\mu+\ell^{-1}) \lambda_{q+1}^{-N+\varepsilon}\Big).\nonumber
\end{align}
Combining the parameter assumption (\ref{e:parameter assumption 1}) and taking $N$ large enough such that $N\beta> 1$, we arrive at
\begin{align}\label{e:estimate on first error}
\|\mathcal{R}^0\|_0 \leq C(\varepsilon)\delta_{q+1}^{\frac12}(\mu+\ell^{-1})\lambda_{q+1}^{-1+\varepsilon}.
\end{align}
The same argument gives
\begin{align}\label{e:estimate on first error graident}
\|\mathcal{R}^0\|_1 \leq C(\varepsilon)\delta_{q+1}^{\frac12}(\mu+\ell^{-1})\lambda_{q+1}^{\varepsilon}.
\end{align}
{\bf Estimate on $\mathcal{R}^1$:} A direct computation gives
\begin{align}
w\cdot\nabla v_{\ell}=\sum_{l\in {\rm Z}}\sum_{k\in \Lambda_{(l)}}\chi(t)L_{kl}\cdot\nabla v_{\ell}\phi_{kl}e^{i\lambda_{q+1} k^\bot\cdot x}.\nonumber
\end{align}
Set
\begin{align}
\Omega_{kl}=\chi(t)L_{kl}\cdot\nabla v_{\ell}\phi_{kl}, \nonumber
\end{align}
thus, (\ref{e:estimate on various amplitude}) and (\ref{e:elementary inequality}) implies that for any $l\in Z$ and $k\in \Lambda_0\cup \Lambda_1$, $|\mu t-l|< 1$,
\begin{align}
\|\Omega_{kl}\|_0\leq& C \delta_{q+1}^{\frac12}\delta_q^{\frac12}\lambda_q, \nonumber\\
 \|\Omega_{kl}\|_N\leq& C(N)\big(\|L_{kl}\|_N\|\nabla v_{\ell}\|_0\|\phi_{kl}\|_0+\|L_{kl}\|_0\|\nabla v_{\ell}\|_N\|\phi_{kl}\|_0\nonumber\\[3pt]
 & \quad \quad \quad \quad \quad \quad \quad \quad \quad + \|L_{kl}\|_0\|\nabla v_{\ell}\|_0\|\phi_{kl}\|_N\big)\nonumber\\
 \leq & C \delta_{q+1}^{\frac12}\delta_q^{\frac12}\lambda_q\Big(\ell^{-N}+\lambda_{q+1}\delta_q^{\frac12}\lambda_q\mu^{-1}\ell^{-N+1}
 +(\lambda_{q+1}\delta_q^{\frac12}\lambda_q\mu^{-1})^N\Big), \quad \forall N\geq 1. \nonumber
\end{align}
Combining the parameter assumption (\ref{e:parameter assumption 1}), we obtain
\begin{align}
\|\Omega_{kl}\|_N\leq C(N) \delta_{q+1}^{\frac12}\delta_q^{\frac12}\lambda_q \lambda_{q+1}^{N(1-\beta)}. \nonumber
\end{align}
By taking $N$ sufficiently large, a similar argument as for $\mathcal{R}^0$ gives
\begin{align}\label{e:estimate on second error}
\|\mathcal{R}^1\|_0 \leq & C(\varepsilon)\sum_{l\in {\rm Z}}\sum_{k\in \Lambda_{(l)}}\Big( \frac{\|\Omega_{kl}\|_0}{\lambda^{1-\varepsilon}_{q+1}}
+\frac{\|\Omega_{kl}\|_N}{\lambda_{q+1}^{N-\varepsilon}}
+\frac{\|\Omega_{kl}\|_{N+\varepsilon}}{\lambda_{q+1}^N}\Big)\nonumber\\
\leq & C(\varepsilon)\delta_{q+1}^{\frac12}\delta_q^{\frac12}\lambda_q\lambda_{q+1}^{-1+\varepsilon}, \nonumber\\[3pt]
\|\mathcal{R}^1\|_1 \leq &C(\varepsilon)\delta_{q+1}^{\frac12}\delta_q^{\frac12}\lambda_q\lambda_{q+1}^{\varepsilon}.
\end{align}
{\bf Estimate on $\mathcal{R}^2$:} By Lemma \ref{l:anti-divergence operator} and (\ref{e:estimate on gradient of temperature difference}), we deduce that
\begin{align}\label{e:estimate on third part}
\|\mathcal{R}^2\|_0\leq& C\|\theta_1-\theta\|_{L^\infty \dot{H}^\varepsilon_x}\leq C\delta_{q+1}^{\frac12}\lambda^{\varepsilon-1}_{q+1},\nonumber\\
\|\mathcal{R}^2\|_1\leq& C\|\nabla(\theta_1-\theta)\|_{L^\infty \dot{H}^\varepsilon_x}\leq C\delta_{q+1}^{\frac12}\lambda_{q+1}^\varepsilon.
\end{align}
{\bf Estimate on $\mathcal{R}^3$:} Recalling (\ref{e:formular of oscillatory term}), we know
\begin{align}
\mathcal{R}^3=\mathcal{R}\Big({\rm div}\Big(w_o\otimes w_o-\sum_l\chi^2_l R_{\ell,l}+P{\rm Id}\Big)\Big)=\mathcal{R}\Big(T^1_{osc}+T^2_{osc}\Big).\nonumber
\end{align}
We first deal with $T^1_{osc}$. By (\ref{e:estimate on various amplitude}) and the parameter assumption (\ref{e:parameter assumption 2}), we deduce that for any $l\in Z$ and $k\in \Lambda_0\cup \Lambda_1$, $|\mu t-l|< 1$, there holds
\begin{align}\label{e:estimate on amplitude of nonlinear interaction}
\|a_{kl}a_{k'l}\phi_{kl}\phi_{k'l}\|_1\leq& \|a_{kl}a_{k'l}\|_1\|\phi_{kl}\phi_{k'l}\|_0+ \|a_{kl}a_{k'l}\|_0 \|\phi_{kl}\phi_{k'l}\|_1 \nonumber\\[3pt]
\leq & C \delta_{q+1}\big(\lambda_q+\delta_q^{\frac12}\lambda_q \lambda_{q+1}\mu^{-1}\big)\leq C \delta_{q+1}\delta_q^{\frac12}\lambda_q \lambda_{q+1}\mu^{-1}, \nonumber\\[3pt]
\|a_{kl}a_{k'l}\phi_{kl}\phi_{k'l}\|_N\leq& C(N)\big(\|a_{kl}a_{k'l}\|_N\|\phi_{kl}\phi_{k'l}\|_0+ \|a_{kl}a_{k'l}\|_0 \|\phi_{kl}\phi_{k'l}\|_N\big)\nonumber\\[3pt]
 \leq & C(N)\Big(\delta_{q+1}\lambda_q \ell^{1-N}+\delta_{q+1}\lambda_{q+1}^{N(1-\beta)}\Big), \quad \forall N\geq2.
\end{align}
Recalling (\ref{e:definition of T1}), by (\ref{e:oscillatory estimate}), choosing $N$ sufficiently large,  we deduce that
\begin{align}
\|\mathcal{R}(T^1_{osc})\|_0\leq & C(\varepsilon)\sum_{l \in {\rm Z}}\sum_{k,k'\in \Lambda_{(l)},k+k'\neq 0}\chi_l^2\Big(\frac{\| \nabla(a_{kl}a_{k'l}\phi_{kl}\phi_{k'l})\|_0}{\lambda_{q+1}^{1-\varepsilon}}\nonumber\\
&\quad \quad \quad +\frac{\| \nabla(a_{kl}a_{k'l}\phi_{kl}\phi_{k'l})\|_N}{\lambda_{q+1}^{N-\varepsilon}}+\frac{\| \nabla(a_{kl}a_{k'l}\phi_{kl}\phi_{k'l})\|_{N+\varepsilon}}{\lambda_{q+1}^N}\Big)\nonumber\\
\leq & C(\varepsilon)\delta_{q+1}\delta_q^{\frac12}\lambda_q \lambda_{q+1}^\varepsilon \mu^{-1}. \nonumber
\end{align}
Similarly, we can obtain
\begin{align}
\|\mathcal{R}(T^1_{osc})\|_1\leq  C(\varepsilon)\delta_{q+1}\delta_q^{\frac12}\lambda_q \lambda_{q+1}^{1+\varepsilon} \mu^{-1}. \nonumber
\end{align}
Next, we deal with the second term $T^2_{osc}$.
From (\ref{d:difinition on f}) and (\ref{e:estimate on various amplitude}), we deduce that
\begin{align}
\|f_{klk'l'}\|_N\leq& \chi_l(t)\chi_{l'}(t) \|a_{kl}a_{k'l'}\phi_{kl}\phi_{k'l'}\|_N.\nonumber
\end{align}
Thus, the same argument as above gives that
\begin{align}
\|\mathcal{R}(T^2_{osc})\|_0\leq& C(\varepsilon)\delta_{q+1}\delta_q^{\frac12}\lambda_q \lambda_{q+1}^\varepsilon \mu^{-1},\nonumber\\
\|\mathcal{R}(T^2_{osc})\|_1\leq& C(\varepsilon)\delta_{q+1}\delta_q^{\frac12}\lambda_q \lambda_{q+1}^{1+\varepsilon} \mu^{-1}.\nonumber
\end{align}
Summing the two parts, we obtain
\begin{align}\label{e:estimate on fourth error}
\|\mathcal{R}^3\|_0+\frac{\|\mathcal{R}^3\|_1}{\lambda_{q+1}} \leq C(\varepsilon)\delta_{q+1}\delta_q^{\frac12}\lambda_q \lambda_{q+1}^\varepsilon \mu^{-1}.
\end{align}
{\bf Estimate on $\mathcal{R}^4$:} By (\ref{e:estimate on velocity perturbation}), we deduce
\begin{align}\label{e:estimate on fifth error}
\|\mathcal{R}^4\|_0 \leq& C \|w_o\|_0\|w_c\|_0\leq C \delta_{q+1}\delta_q^{\frac12}\lambda_q  \mu^{-1}, \nonumber\\
\|\mathcal{R}^4\|_1 \leq& C \big(\|w_o\|_1\|w_c\|_0+\|w_o\|_0\|w_c\|_1\big)\leq C \delta_{q+1}\delta_q^{\frac12}\lambda_q \lambda_{q+1} \mu^{-1}.
\end{align}
{\bf Estimate on $\mathcal{R}^5$:} By (\ref{e:estimate on velocity perturbation}), (\ref{e:estimate on convolution 2}), (\ref{e:estimate on convolution 1}) and the parameter assumption (\ref{e:parameter assumption 2}), we deduce
\begin{align}\label{e:estimate on sixth error}
\|\mathcal{R}^5\|_0 \leq& C \|w\|_0\|v-v_{\ell}\|_0\leq C \delta_{q+1}^{\frac12}\delta_q^{\frac12}\lambda_q  \ell, \nonumber\\
\|\mathcal{R}^5\|_1 \leq& C \big(\|w\|_1\|v-v_{\ell}\|_0+\|w\|_0\|v-v_{\ell}\|_1\big)\leq C \delta_{q+1}^{\frac12}\delta_q^{\frac12}\lambda_q \lambda_{q+1} \ell.
\end{align}
{\bf Estimate on $\mathcal{R}^6$:}  By (\ref{e:estimate on convolution 1}), we deduce
\begin{align}\label{e:estimate on seventh error}
\|\mathcal{R}^6\|_0 \leq& C \|\nabla \mathring{R}\|_0 \ell \leq C \delta_{q+1}\lambda_q  \ell,\nonumber\\
\|\mathcal{R}^6\|_1 \leq& C \|\nabla \mathring{R}\|_1  \leq C \delta_{q+1}\lambda_q  .
\end{align}
Finally, summing estimates (\ref{e:estimate on first error})-(\ref{e:estimate on seventh error}), we obtain (\ref{e:estimate on the new Reynold stress}).
\end{proof}

\setcounter{equation}{0}

\section{Proof of Proposition \ref{p: iterative 1}}

\begin{proof}
{\bf Step 1: Choice of the parameters $\mu, \ell$.}
Recalling that the sequence $\{\delta_q\}_{q\in {\rm N}}$ and $\{\lambda_q\}_{q\in {\rm N}}$ are chosen to satisfy
\begin{align}
\delta_q=a^{-b^q}, \quad \lambda_q\in\big[a^{cb^{q+1}}, 2a^{cb^{q+1}}\big], \nonumber
\end{align}
where $b=\frac{6+\gamma}{4}, c=\frac{4(5+\gamma)}{6+\gamma}$ and $a>1$ which will be determined later.
Take
\begin{align}\label{e:parameter choosing}
\mu=\delta_q^{\frac14}\lambda_q^{\frac12}\lambda_{q+1}^{\frac12}, \quad \ell=\delta_q^{-\frac14}\lambda_{q}^{-\frac12}\lambda_{q+1}^{-\frac12}.
\end{align}
We check that $\mu, \ell, \delta_q, \lambda_q$ satisfies (\ref{e:parameter assumption 1}). Firstly, by (\ref{e:parameter choosing}), it's easy to obtain
\begin{align}
\frac{\delta_q^{\frac12}\lambda_q \ell}{\delta_{q+1}^{\frac12}}=\frac{\mu}{\lambda_{q+1}\delta_{q+1}^{\frac12}}=
\frac{\lambda_{q}^{\frac12}\delta_{q}^{\frac14}}{\lambda_{q+1}^{\frac12}\delta_{q+1}^{\frac12}}, \quad \frac{\delta_q^{\frac12}\lambda_q}{\mu}=\frac{1}{\ell \lambda_{q+1}}=\frac{\lambda_{q}^{\frac12}\delta_{q}^{\frac14}}{\lambda_{q+1}^{\frac12}}.\nonumber
\end{align}
Taking $a$ large enough, a direct computation gives that
\begin{align}
\lambda_{q}^{\frac12}\delta_{q}^{\frac14}\lambda_{q+1}^{-\frac12}\delta_{q+1}^{-\frac12}\leq & 2a^{\big(-\frac14+\frac{cb}{2}+\frac{b}{2}-\frac{cb^2}{2}\big)b^q}\leq 2a^{\frac{-6-6\gamma-\gamma^2}{8}b^q}\leq 2a^{-\frac34b^q}\leq 1, \nonumber\\
\lambda_{q}^{\frac12}\delta_{q}^{\frac14}\lambda_{q+1}^{-\frac12}\leq & 2a^{\big(-\frac{1}{4cb^2}+\frac{1}{2b}-\frac{1}{2}\big)cb^{q+2}}\leq 2a^{-\frac{2+\gamma}{12+\gamma}cb^{q+2}}\leq \frac12\lambda_{q+1}^{-\beta}, \nonumber
\end{align}
where we fix $\beta=\frac18$. This justifies the parameter assumption (\ref{e:parameter assumption 1}).

{\bf Step 2: Proof of (\ref{e:Reynold stress estimate})-(\ref{e:energy estimate on temperature}).} Firstly, (\ref{e:estimate on the new Reynold stress}) and (\ref{e:parameter choosing}) imply
\begin{align}
\|\mathring{R}_{q+1}\|_0+\frac{\|\mathring{R}_{q+1}\|_1}{\lambda_{q+1}}\leq C\delta_{q+1}^{\frac12}\delta_q^{\frac14}\lambda_q^{\frac12}\lambda_{q+1}^{-\frac12+\varepsilon}+
C\delta_{q+1}^{\frac12}\delta_q^{\frac14}\lambda_q^{\frac12}\lambda_{q+1}^{-\frac12}\leq C\delta_{q+1}^{\frac12}\delta_q^{\frac14}\lambda_q^{\frac12}\lambda_{q+1}^{-\frac12+\varepsilon}.\nonumber
\end{align}
A direct computation gives
\begin{align}
\delta_{q+1}^{\frac12}\delta_q^{\frac14}\lambda_q^{\frac12}\lambda_{q+1}^{-\frac12+\varepsilon}\leq & 2a^{-b^{q+2}\big(\frac{1}{2b} +\frac{1}{4b^2}-\frac{c}{2b}+(\frac12-\varepsilon)c\big)}\nonumber\\
\leq & 2a^{-b^{q+2}\big(1+\frac{4\gamma+\gamma^2}{(6+\gamma)^2}-\frac{4(5+\gamma)}{6+\gamma}\varepsilon\big)} \leq\eta \delta_{q+2},\nonumber
\end{align}
where we take $a$ sufficiently large and $\varepsilon=\varepsilon(\gamma)$ sufficiently small. This gives  (\ref{e:Reynold stress estimate}).

By (\ref{e:estimate on velocity perturbation}), we deduce
\begin{align}
\|v_{q+1}-v_q\|_0\leq& \|w_o\|_0+\|w_c\|_0\leq \delta_{q+1}^{\frac12}\Big(\frac{M}{2}+\lambda_{q+1}^{-\beta}\Big),\nonumber\\
\|v_{q+1}-v_q\|_N\leq& \|w_o\|_N+\|w_c\|_N\leq C(N)\delta_{q+1}^{\frac12}\lambda^N_{q+1}, \quad \forall N\geq 1.\nonumber
\end{align}
Noticing $\lambda_q\geq \lambda_1\geq a^{cb^2}$, thus we obtain (\ref{e:velocity difference estimate}) by taking $a$ sufficiently large.


Recalling (\ref{e:perturbation on pressure}), by (\ref{e:estimate on various amplitude}) and the support property of $\chi$, we obtain
\begin{align}
\|P\|_0\leq \frac{M^2}{2} \delta_{q+1}, \quad \|P\|_1\leq \frac{M^2}{2} \delta_{q+1}\lambda_{q+1}. \nonumber
\end{align}
 Thus, from (\ref{d:definition of new pressure}), (\ref{e:estimate on velocity perturbation}) and (\ref{e:parameter choosing}), we deduce
\begin{align}
\|p_{q+1}-p_q\|_0\leq& \frac{M^2}{2} \delta_{q+1}+2\big(\|w_o\|_0\|w_c\|_0+\|w\|_0\|v-v_{\ell}\|_0\big)\leq M^2\delta_{q+1},\nonumber\\
\|p_{q+1}-p_q\|_1\leq& \frac{M^2}{2} \delta_{q+1}\lambda_{q+1}+2\big(\|w_o\|_1\|w_c\|_0+\|w_o\|_0\|w_c\|_1\nonumber\\[3pt]
& \quad \quad \quad +\|w\|_1\|v-v_{\ell}\|_0+\|w\|_0\|v-v_{\ell}\|_1\big)\leq M^2\delta_{q+1}\lambda_{q+1},\nonumber
\end{align}
this give (\ref{e:pressure difference estimate}).

Moreover, (\ref{e:energy conservation of temperature}) implies (\ref{e:energy identity}). Using the above estimate on $\|v_{q+1}-v_q\|_0$, we easily get (\ref{e:energy estimate on temperature}) from (\ref{e:energy difference estimate 0}).

{\bf Step 3: Proof of (\ref{e:time derivative estimate}).} Recall that $v_{q+1}=v_q+w$ and
\begin{align}
\partial_t w=\sum_{l\in {\rm Z}}\sum_{k\in \Lambda_{(l)}}(\chi'_l(t) L_{kl}+\chi_l(t)\partial_t L_{kl}+i\lambda_{q+1}\chi_l(t)L_{kl}k^\perp\cdot \partial_t\Phi_l)e^{i\lambda_{q+1} k^\perp\cdot \Phi_l}.\nonumber
\end{align}
From (\ref{e:estimate on transport phase}), (\ref{e:estimate on various amplitude}) and the parameter assumption (\ref{e:parameter assumption 1}), we deduce that
\begin{align}
\|\partial_t w\|_0\leq C \delta_{q+1}^{\frac12}(\mu+\ell^{-1}+\lambda_{q+1})\leq C \delta_{q+1}^{\frac12}\lambda_{q+1},\nonumber
\end{align}
hence
\begin{align}
\|\partial_t (v_{q+1}-v_q)\|_0\leq  C \delta_{q+1}^{\frac12}\lambda_{q+1}.\nonumber
\end{align}
Similarly, there holds
\begin{align}
\|\partial_t (p_{q+1}-p_q)\|_0\leq C \delta_{q+1}\lambda_{q+1}, \nonumber
\end{align}
this completes the proof of (\ref{e:time derivative estimate}).

{\bf Step 4: Proof of (\ref{e:energy difference estimate}).} Finally, to complete the proof of this proposition, we only need to justify (\ref{e:energy difference estimate}).
A direct computation gives
\begin{align}
&e(t)\big(1-\delta_{q+2}\big)-\int_{{\rm T}^2}|v_{q+1}(t,x)|^2dx\nonumber\\
=& \underbrace{e(t)\big(1-\delta_{q+2}\big)-\int_{{\rm T}^2}|v_q(t,x)|^2dx-\int_{{\rm T}^2}|w_o|^2dx}_{Err_1}\nonumber\\
&-\underbrace{2\int_{{\rm T}^2}v_q\cdot wdx- \int_{{\rm T}^2}(2 w_o\cdot w_c+|w_c|^2)dx}_{Err_2}.\nonumber
\end{align}
We first deal with $Err_2$. From (\ref{e:estimate on velocity perturbation}), it's easy to obtain
\begin{align}
\Big|\int_{{\rm T}^2}\big(2 w_o\cdot w_c+|w_c|^2\big)dx\Big|\leq C\frac{\delta_{q+1}\delta_q^{\frac12}\lambda_q}{\mu}.\nonumber
\end{align}
Moreover,
\begin{align}
v_q\cdot w=\sum_l \sum_{k\in \Lambda_{(l)}} \chi(t)L_{kl}\cdot v_q\phi_{kl}e^{i\lambda_{q+1} k^\bot\cdot x}.\nonumber
\end{align}
Thus, by (\ref{e:estimate on various amplitude}) and (\ref{e:oscillatory integration estimate}),  we deduce that
\begin{align}
\Big|\int_{{\rm T}^2}v_q\cdot wdx\Big|=&\Big|\sum_l \sum_{k\in \Lambda_{(l)}} \chi(t)\int_{{\rm T}^2}L_{kl}\cdot v_q\phi_{kl}e^{i\lambda_{q+1} k^\bot\cdot x}dx\Big|\nonumber\\
\leq & \sum_l \sum_{k\in \Lambda_{(l)}} \chi(t)\frac{\|\nabla(L_{kl}\cdot v_q\phi_{kl})\|_0}{\lambda_{q+1}}\nonumber\\
\leq & C \frac{\delta^{\frac12}_{q+1}\delta_q^{\frac12}\lambda_q}{\mu}+C\frac{\delta_{q+1}^{\frac12}\ell^{-1}}{\lambda_{q+1}}.\nonumber
\end{align}
Combining the two parts, we obtain
\begin{align}\label{e:estimate on second energy error}
|Err_2|\leq C \frac{\delta^{\frac12}_{q+1}\delta_q^{\frac12}\lambda_q}{\mu}+C\frac{\delta_{q+1}^{\frac12}\ell^{-1}}{\lambda_{q+1}}.
\end{align}
For $Err_1$, we first compute $\int_{{\rm T}^2}|w_o|^2dx$. From (\ref{c:computation of nonlinear interaction}), (\ref{c:computation of nonlinear interaction in same index}) and (\ref{c:computation of nonlinear interaction in different index}), we deduce that
\begin{align}
|w_0|^2=&2\sum_l\chi^2_l(t)\rho_l-\sum_{l \in {\rm Z}}\sum_{k,k'\in \Lambda_{(l)},k+k'\neq 0}\chi_l^2 a_{kl}a_{k'l}\phi_{kl}\phi_{k'l}k\cdot k'e^{i\lambda_{q+1}(k+k')^\bot\cdot x}\nonumber\\
&-\sum_{l\neq l', k\in \Lambda_{(l)}, k'\in \Lambda_{(l')}}f_{klk'l'} k\cdot k'e^{i\lambda_{q+1}(k+k')^\bot\cdot x}:=2\sum_l\chi^2_l(t)\rho_l+I_1+I_2.\nonumber
\end{align}
Recalling (\ref{d:definition of l amplitude}), we obtain
\begin{align}
2(2\pi)^2\sum_l\chi^2_l(t)\rho_l=&e(t)\big(1-\delta_{q+2}\big)-\int_{{\rm T}^2}|v_q(t,\cdot)|^2dx\nonumber\\
&+\underbrace{\sum_l \chi^2_l(t)\Big[\Big(e\Big(\frac{l}{\mu}\Big)-e(t)\Big)\big(1-\delta_{q+2}\big)}_{I_{31}}\nonumber\\
&+\underbrace{\int_{{\rm T}^2}\Big(|v_q(t,x)|^2-\Big|v_q\Big(\frac{l}{\mu},x\Big)\Big|^2\Big)dx\Big]}_{I_{32}}.\nonumber
\end{align}
It's easy to obtain that
\begin{align}
|I_{31}|\leq C(e)\mu^{-1}.\nonumber
\end{align}
As in \cite{BCDLI}, using the equation (\ref{e:boussinesq-reynold equation}), we can deduce
\begin{align}
&\int_{{\rm T}^2}\Big(|v_q(t,x)|^2-|v_q(l\mu^{-1},x)|^2\Big)dx=\int^{t}_{\frac{l}{\mu}}\int_{{\rm T}^2}\partial_t |v_q(s,x)|^2dsdx\nonumber\\
=&-\int^{t}_{\frac{l}{\mu}}\int_{{\rm T}^2}{\rm div}\big(v_q(|v_q|^2+2p_q)(s,x)\big)dsdx+2\int^{t}_{\frac{l}{\mu}}\int_{{\rm T}^2}v_q\cdot{\rm div}\mathring{R}_q(s,x) dsdx\nonumber\\
&+\int^{t}_{\frac{l}{\mu}}\int_{{\rm T}^2}\theta_q e_2\cdot v_q (s,x) dsdx\nonumber\\
=&-2\int^{t}_{\frac{l}{\mu}}\int_{{\rm T}^2}\nabla v_q:\mathring{R}_q(s,x) dsdx+\int^{t}_{\frac{l}{\mu}}\int_{{\rm T}^2}\theta_q e_2\cdot v_q (s,x) dsdx;\nonumber
\end{align}
thus, by (\ref{e: estimate on known velocity}) and (\ref{e: estimate on known Reynold stress}), for any $l\in Z$ and $t$ in the range $|\mu t-l|< 1$, there hold
\begin{align}
|I_{32}|\leq C \frac{\delta_{q+1}\delta_q^{\frac12}\lambda_q}{\mu}+ \frac{C}{\mu}.\nonumber
\end{align}
Moreover, by (\ref{e:estimate on amplitude of nonlinear interaction}) and (\ref{e:oscillatory integration estimate}), we deduce
\begin{align}
\Big|\int_{{\rm T}^2}(I_1+I_2)dx\Big|\leq C \frac{\delta_{q+1}\delta_q^{\frac12}\lambda_q}{\mu}.\nonumber
\end{align}
Thus, collecting these estimate together, we obtain
\begin{align}\label{e:estimate on first energy error}
|Err_1|\leq C(e)\mu^{-1}+C\delta_{q+1}\delta_q^{\frac12}\lambda_q\mu^{-1}.
\end{align}
Combining (\ref{e:estimate on second energy error}) and (\ref{e:estimate on first energy error}), we deduce that
\begin{align}
&\Big|e(t)\big(1-\delta_{q+2}\big)-\int_{{\rm T}^2}|v_{q+1}(t,x)|^2dx\Big|\leq C(e)\mu^{-1}+C\delta_{q+1}^{\frac12}\delta_q^{\frac12}\lambda_q\mu^{-1}+C\frac{\delta_{q+1}^{\frac12}\ell^{-1}}{\lambda_{q+1}}.\nonumber
\end{align}
Recalling (\ref{e:parameter choosing}), we obtain
\begin{align}
\Big|e(t)\big(1-\delta_{q+2}\big)-\int_{{\rm T}^2}|v_{q+1}(t,x)|^2dx\Big|\leq & C\delta_q^{-\frac14}\lambda_q^{-\frac12}\lambda_{q+1}^{-\frac12}+C\delta_q^{\frac14}\delta_{q+1}^{\frac12}\lambda_q^{\frac12}\lambda_{q+1}^{-\frac12}\nonumber\\
\leq & C\delta_q^{\frac14}\delta_{q+1}^{\frac12}\lambda_q^{\frac12}\lambda_{q+1}^{-\frac12}\leq Ca^{\big(-\frac14+\frac{b}{2}+\frac{cb}{2}-\frac{cb^2}{2}\big)}\delta_{q+1}\nonumber\\[3pt]
\leq&  Ca^{-\frac12 b^{q}}\delta_{q+1},\nonumber
\end{align}
which implies (\ref{e:energy difference estimate}) by taking $a$ sufficiently large.

{\bf Step 5: Choice of the $(v_0, p_0, \theta_0, \mathring{R}_0)$.} Set
\begin{align}
v_0(t,x):=&\binom{\sqrt{\frac{(1-\delta_1)e(t)}{2\pi^2}}\sin(\lambda_0 x_2)}{0},  \nonumber\\
\mathring{R}_0:=&
\left(\begin{array}{cc}
0 & -\frac{\sqrt{1-\delta_1}e'(t)}{\sqrt{8\pi^2 e(t)}} \frac{\cos(\lambda_0 x_2)}{\lambda_0} \\
-\frac{\sqrt{1-\delta_1}e'(t)}{\sqrt{8\pi^2 e(t)}} \frac{\cos(\lambda_0 x_2)}{\lambda_0} & 0
\end{array}
\right),\nonumber\\[3pt]
 p_0:=&e^{-t} \int_0^{x_2}\theta^0(s)ds, \quad \theta_0:=e^{-t}\theta^0(x_2),\nonumber
\end{align}
where $\theta^0\in X $ as Theorem \ref{t:main 1}.

It's direct to justify that $(v_0, p_0, \theta_0, \mathring{R}_0)$ satisfies the Boussinesq-Reynold system (\ref{e:boussinesq-reynold equation}) and
\begin{align}
\Big|e(t)(1-\delta_1)-\int_{{\rm T}^2}|v_0(t,x)|^2dx\Big|=&0\leq \frac{\delta_1}{4}e(t), \nonumber\\
\frac12 \|\theta_0(t,\cdot)\|_2^2+\int_0^t \|\nabla \theta_0(t,\cdot)\|_{L^2}^2dx=&\frac12 \|\theta^0(\cdot)\|^2_{L^2}, \quad \forall t\geq 0. \nonumber
\end{align}
Moreover, by taking $\lambda_0$ sufficiently large, we obtain
\begin{align}
\|\mathring{R}_0\|_0\leq \eta \delta_1. \nonumber
\end{align}

Finally, staring with $(v_0, p_0, \theta_0, \mathring{R}_0)$, we can construct sequence $(v_q, p_q, \theta_q, \mathring{R}_q)$ inductively which solve Boussinesq-Reynold system (\ref{e:boussinesq-reynold equation}) and satisfy (\ref{e:Reynold stress estimate})-(\ref{e:time derivative estimate}). Thus, we complete the proof of Proposition \ref{p: iterative 1}.

\end{proof}

\appendix


\setcounter{equation}{0}
\section{Estimate for transport equation}

In this section we give some well known estimates for the smooth solution of transport equation:
\begin{align}\label{e:transport equation on torus}
\left\{
\begin{array}{ll}
\partial_tf+v\cdot\nabla f=g,\\
f|_{t=0}=f_0
\end{array}
\right.
\end{align}
where $v=v(t,x)$ is a given smooth vector field. The proof can be found in \cite{BCDLI}.

\begin{Proposition}
Assume $|t|\|v\|_1\leq 1$. Then any solution of (\ref{e:transport equation on torus}) satisfies the following estimates:
\ben
\|f(t)\|_0&\leq& \|f_0\|_0+\int_0^t\|g(\cdot, \tau)\|_0d\tau, \label{e:transport estimate for zero derivative}\\
\|f(t)\|_\alpha&\leq& 2\Big( \|f_0\|_\alpha+\int_0^t\|g(\cdot, \tau)\|_\alpha d\tau\Big) \label{e:transport estimate for holder derivative}
\een
for all $0<\alpha\leq 1$. Generally, for any $N\geq 1$ and $0\leq\alpha\leq 1$, there hold
\ben
[f(\cdot, t)]_{N+\alpha}\lesssim  [f_0]_{N+\alpha}+|t|[v]_{N+\alpha}[f_0]_1+\int_0^t\big([g(\cdot, \tau)]_{N+\alpha}+(t-\tau)[v]_{N+\alpha}[g(\cdot, \tau)]_1 \big)d\tau. \label{e:transport estimate for high derivative}
\een
Let $\Phi(t,\cdot)$ be the inverse of the flux $X$ of $v$ staring at time $t_0$ as the identity. Under the above assumption $|t-t_0|\|v\|_1\leq 1$ we have
\ben
\|\nabla \Phi-{\rm Id}\|_0\lesssim |t-t_0|[v]_1,\quad [\nabla \Phi]_N\lesssim |t-t_0|[v]_N,\quad \forall N\geq 1.\label{e:inverse flux estimate}
\een
\end{Proposition}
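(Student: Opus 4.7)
The plan is to use the method of characteristics throughout. Define the Lagrangian flow $X(t,x)$ by $\partial_t X = v(t, X)$ with $X(0,x) = x$, and let $\Phi(t,\cdot) = X(t,\cdot)^{-1}$ denote its spatial inverse. Differentiating $f$ along characteristics gives $\tfrac{d}{d\tau}f(\tau,X(\tau,x)) = g(\tau,X(\tau,x))$, yielding the representation formula
\[ f(t,y) = f_0(\Phi(t,y)) + \int_0^t g\bigl(\tau, X(\tau,\Phi(t,y))\bigr)\,d\tau. \]
Every estimate in the proposition will be read off from this formula together with quantitative control of $\Phi$.

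First I would establish the flow bounds \eqref{e:inverse flux estimate}. Differentiating the ODE for $X$ in the spatial variable yields $\partial_\tau \nabla X = (\nabla v)\circ X \cdot \nabla X$, and Gronwall's inequality under the hypothesis $|t|\|v\|_1 \le 1$ gives $\|\nabla X\|_0 \le e$ and $\|\nabla X - \mathrm{Id}\|_0 \lesssim |t|[v]_1$; the same bounds transfer to $\Phi$ via $\nabla \Phi = (\nabla X)^{-1}\circ \Phi$, which in addition gives $\|\nabla \Phi\|_0 \le 2$. For higher derivatives, one differentiates the ODE $N$ further times: $\nabla^{N+1}X$ satisfies a linear ODE whose source is a polynomial in $\nabla^k v$ and $\nabla^j X$ with $k,j\le N+1$, and after induction plus Gronwall one obtains $[\nabla \Phi]_N \lesssim |t|[v]_N$.

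With the flow estimates in hand, \eqref{e:transport estimate for zero derivative} is immediate from the representation formula by taking the supremum in $y$. For the H\"older estimate \eqref{e:transport estimate for holder derivative}, I would use the chain-rule inequality $[h\circ \Phi]_\alpha \le [h]_\alpha \|\nabla \Phi\|_0^\alpha$ together with $\|\nabla \Phi\|_0 \le 2$, which produces the factor $2^\alpha \le 2$; the same inequality applied inside the time integral (the map $y\mapsto X(\tau,\Phi(t,y))$ still has Jacobian $\lesssim 2$ under the smallness hypothesis) gives the integrated source term.

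The main obstacle is the higher-derivative bound \eqref{e:transport estimate for high derivative}, where the chain rule must be iterated. I would apply the Fa\`a di Bruno formula to $\nabla^{N+\alpha}(f_0\circ \Phi)$ and sort the resulting terms into three groups: the ``all derivatives on $f_0$'' term bounded by $[f_0]_{N+\alpha}\|\nabla \Phi\|_0^{N+\alpha}\lesssim [f_0]_{N+\alpha}$; the ``highest derivative on $\Phi$'' term bounded by $[f_0]_1 [\nabla \Phi]_{N-1+\alpha}\lesssim |t|[v]_{N+\alpha}[f_0]_1$; and mixed intermediate terms controlled by interpolating between the two endpoints using the bound $[\nabla \Phi]_j \lesssim |t|[v]_j$ combined with $|t|\|v\|_1\le 1$. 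The same expansion applied to $g\bigl(\tau,X(\tau,\Phi(t,\cdot))\bigr)$ under the time integral, together with $[X(\tau,\Phi(t,\cdot))]_{N+\alpha}\lesssim (t-\tau)[v]_{N+\alpha}$ from the flow bound on the interval $[\tau,t]$, produces the integrated source in the stated inequality. The purely combinatorial bookkeeping in this last step is the delicate part, but it is linear-algebraic in nature once the flow estimates of Step~2 are secured.
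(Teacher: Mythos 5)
Your proof is correct and takes the same route — the characteristics representation $f(t,y)=f_0(\Phi(t,y))+\int_0^t g(\tau,X(\tau,\Phi(t,y)))\,d\tau$, Gronwall for the flow bounds, and the chain rule / Fa\`a di Bruno for H\"older seminorms — as the cited reference \cite{BCDLI}, to which the paper defers without giving a proof of its own. One small inaccuracy worth flagging: under the hypothesis $|t|\|v\|_1\le 1$, Gronwall gives $\|\nabla\Phi\|_0\le e^{|t|[v]_1}\le e$, not $\le 2$, so the constant you actually extract in \eqref{e:transport estimate for holder derivative} is $e^{\alpha}$ rather than $2^{\alpha}$; this is a cosmetic discrepancy that the proposition's stated constant $2$ shares, and it does not affect the remaining estimates, all of which carry implicit constants.
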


\section{Stationary phase estimate}

In the section, we recall the following simple fact, and the proof can also be found in \cite{BCDLI}.

\begin{Lemma}
Let $k\in Z^2, k\neq 0$ and $\lambda\geq 1$ be given. For any $a\in C^\infty({\rm T}^2)$ and $m\in {\rm N}$, there hold
\begin{align}\label{e:oscillatory integration estimate}
\Big|\int_{{\rm T}^2}a(x)e^{i\lambda k\cdot x}dx\Big|\leq \frac{[a]_m}{\lambda^m}.
\end{align}
\end{Lemma}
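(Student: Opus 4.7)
The idea is a standard repeated integration by parts exploiting the oscillation of $e^{i\lambda k\cdot x}$; on $\mathrm{T}^2$ there are no boundary terms, so the gains compound cleanly.

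First I would record the key pointwise identity
\begin{equation*}
e^{i\lambda k\cdot x} \;=\; \frac{1}{i\lambda\,|k|^{2}}\,k\cdot \nabla e^{i\lambda k\cdot x},
\end{equation*}
which is immediate from $\nabla e^{i\lambda k\cdot x}=i\lambda k\,e^{i\lambda k\cdot x}$. Substituting into $\int_{\mathrm{T}^2} a(x)\,e^{i\lambda k\cdot x}\,dx$ and integrating by parts (periodic boundary, no boundary terms) yields
\begin{equation*}
\int_{\mathrm{T}^2} a(x)\,e^{i\lambda k\cdot x}\,dx \;=\; -\frac{1}{i\lambda\,|k|^{2}}\int_{\mathrm{T}^2} (k\cdot \nabla a)(x)\,e^{i\lambda k\cdot x}\,dx.
\end{equation*}
Each such step trades one factor of $\lambda^{-1}|k|^{-2}$ for a directional derivative $k\cdot\nabla$ on $a$, so iterating $m$ times gives
\begin{equation*}
\int_{\mathrm{T}^2} a(x)\,e^{i\lambda k\cdot x}\,dx \;=\; \Big(\frac{-1}{i\lambda\,|k|^{2}}\Big)^{m}\int_{\mathrm{T}^2} (k\cdot \nabla)^{m} a(x)\,e^{i\lambda k\cdot x}\,dx.
\end{equation*}

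Then I would take absolute values and bound the right-hand side by
\begin{equation*}
\frac{1}{\lambda^{m}\,|k|^{2m}}\,\|(k\cdot\nabla)^{m} a\|_{0}\,|\mathrm{T}^2|.
\end{equation*}
Since $|(k\cdot\nabla)^{m}a(x)|\le |k|^{m}\,[a]_{m}$ pointwise (sum of at most $2^{m}$ monomials in $k_1,k_2$ against $m$-th spatial derivatives of $a$), and $|k|\ge 1$ because $k\in\mathrm{Z}^2\setminus\{0\}$, this collapses to
\begin{equation*}
\Big|\int_{\mathrm{T}^2}a(x)\,e^{i\lambda k\cdot x}\,dx\Big| \;\le\; \frac{[a]_{m}}{\lambda^{m}\,|k|^{m}} \;\le\; \frac{[a]_{m}}{\lambda^{m}},
\end{equation*}
up to the harmless $(2\pi)^{2}$ factor from $|\mathrm{T}^2|$ which is absorbed into the implicit constant in the paper's convention. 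For $m=0$ the statement is just $|\int a\,e^{i\lambda k\cdot x}|\le \|a\|_{0}\cdot|\mathrm{T}^2|$, giving the starting case.

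There is essentially no obstacle here: periodicity makes each integration by parts lossless, and the bound on $(k\cdot\nabla)^{m}a$ is a direct consequence of the definition of $[a]_{m}$. The only thing to be careful about is ensuring that the directional-derivative bound does not introduce spurious factors of $|k|$ in the numerator that would outrun the $|k|^{-2m}$ supplied by the integration-by-parts identity — but as noted, the count is exactly $|k|^{m}$ from the derivatives against $|k|^{2m}$ in the denominator, leaving the advertised power of $\lambda$.
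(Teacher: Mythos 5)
The paper itself does not prove this lemma; it simply cites \cite{BCDLI} for the proof. Your argument is exactly the standard repeated integration by parts that one finds there, so in spirit the two coincide, and the overall structure of your proof is sound.

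One small technical refinement is worth flagging. When you bound $(k\cdot\nabla)^m a$, the multinomial expansion actually gives
\begin{equation*}
\bigl|(k\cdot\nabla)^m a(x)\bigr|\;\le\; \sum_{j=0}^{m}\binom{m}{j}\,|k_1|^{j}|k_2|^{m-j}\,\bigl|\partial_1^j\partial_2^{m-j}a(x)\bigr|\;\le\;(|k_1|+|k_2|)^m\,[a]_m,
\end{equation*}
and $(|k_1|+|k_2|)^m$ can exceed $|k|^m$ by a factor of up to $2^{m/2}$; together with $|k|^{2m}$ in the denominator this leaves a residual $2^{m/2}/|k|^m$, which is not cleanly $\le 1$ for the stated constant-free inequality. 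The simpler and sharper route, which eliminates all such combinatorics, is to pick a single coordinate direction: since $k\in\mathrm{Z}^2\setminus\{0\}$ there is some $j\in\{1,2\}$ with $|k_j|\ge 1$, and then
\begin{equation*}
\int_{\mathrm{T}^2} a(x)\,e^{i\lambda k\cdot x}\,dx \;=\; \Bigl(\frac{-1}{i\lambda k_j}\Bigr)^{m}\int_{\mathrm{T}^2} \partial_j^{m}a(x)\,e^{i\lambda k\cdot x}\,dx,
\end{equation*}
giving directly $\bigl|\int_{\mathrm{T}^2} a\,e^{i\lambda k\cdot x}\,dx\bigr|\le |\mathrm{T}^2|\,\|\partial_j^m a\|_0/\lambda^m\le |\mathrm{T}^2|\,[a]_m/\lambda^m$. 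Either way the factor $|\mathrm{T}^2|$ (or the $2^{m/2}$ in your version) is implicitly absorbed, which is harmless for how the lemma is used, but the single-direction version matches the stated inequality essentially verbatim while your $k\cdot\nabla$ version needs the extra combinatorial constant.
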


{\bf Acknowledgments.}
The first author is supported in part by NSFC Grants 11601258.6.
The second author is supported by the fundamental research funds of Shandong university under Grant 11140078614006. The third author is partially supported by the Chinese NSF under Grant 11471320 and 11631008.


\begin{thebibliography}{WWW}



\bibitem{TBU}
T. Buckmaster, {\it Onsager's conjecture almost everywhere in time},
Comm. Math. Phys. 333(2015), 1175-1198.

\bibitem{BCV}
T.Buckmaster, M. Colombo and V.Vicol, {\it Wild solutions of the Navier-Stokes equations whose singular sets in time have Hausdorff dimension strictly less than 1},  arXiv:1809.00600

\bibitem{BCDLI}
T. Buckmaster, C.  De Lellis, P.  Isett,  and Sz\'{e}kelyhidi. Jr.
L, {\it Anomalous dissipation for 1/5-H\"{o}lder Euler
flows},  Ann. of. Math. 182(2015), 127-172

\bibitem{BCDL1}
T. Buckmaster, C.  De Lellis, and Sz\'{e}kelyhidi. Jr.
L, {\it Transporting microstructure and dissipative Euler flows},
arXiv:1302.2825, 2013

\bibitem{BCDL2}
T. Buckmaster, C.  De Lellis, and Sz\'{e}kelyhidi. Jr.L, {\it Dissipative Euler
flows with Onsager-critical spatial
regularity},  Comm. Pure Appl. Math, 69(2016), 1613-1670

\bibitem{BCDLV}
T. Buckmaster, C.  De Lellis,  Sz\'{e}kelyhidi. Jr.L, and V. Vicol, {\it Onsager conjecture for admissible weak solution}, Comm. Pure Appl. Math, 72(2019), 229-274

\bibitem{BSV}
T. Buckmaster, Shkoller, and V. Vicol, {\it Nonuniqueness of weak solutions to SQG equation}, to appear in Comm. Pure Appl. Math.

\bibitem{BV}
T. Buckmaster, and V. Vicol, {\it Nonuniqueness of weak solutions to Navier-Stokes equation}, Ann. of. math, 189(2019), 101-144

\bibitem{Chae}
D. Chae, {\it Global regularity for the 2-D Boussinesq equation with partial viscous terms}, Adv. in. Math, 203(2006), 497-513

\bibitem{CPFR}
A. Cheskidov, P. Constantin, S. Friedlander, and R. Shvydkoy, {\it Energy conservation and Onsager's conjecture for the Euler
equations},  Nonlinearity  21(6)(2008), 1233-1252

\bibitem{CHO}
A. Choffrut, {\it H-principles for the incompressible Euler
equations},  Arch. Ration. Mech. Anal. 210(2013), 133-163.

\bibitem{PC}
P. Constantin, {\it On the Euler equation of incompressible flow},
Bull. Amer. Math. Soc. 44(4)(2007), 603-621.



\bibitem{CM}
P. Constantin, and A. Majda, {\it The Beltrami spectrum for incompressible fluid flows},
 Comm. Math. Phys. 115(1988), 435-456

\bibitem{CCDE}
S. Conti, C. De Lellis, and Sz\'{e}kelyhidi. Jr.
L,  {\it H-principle and
rigidity for $C^{1,\alpha}$ isometric embeddings} , In Nonlinear Partial Differential
 Equations vol.7 of Abel Symposia Springer (2012), 83-116.


\bibitem{CET}
P. Constantin, E. W, and Titi. E. S, {\it Onsager's conjecture on
the energy conservation for solutions of Euler's equation}, Comm. Math. Phys,
 165(1)(1994), 207-209.



\bibitem{DA}
S. Daneri, {\it Cauchy problem for dissipative H\"{o}lder solutions to the
incompressible Euler equations},  Comm. Math. Phy. (2014), 1-42.

\bibitem{DAL}
S. Daneri, and Sz\'{e}kelyhidi. Jr.
L, {\it Non-uniqueness and h-principle for H\"{o}lder continuous weak solution of Euler equation},  Arch. Ration. Mech. Anal, 224(2017), 471-514


\bibitem{CDL}
C. De Lellis, and Sz\'{e}kelyhidi. Jr.
L, {\it The Euler equation as a differential inclusion}, Ann. of. Math. 170(3)(2009), 1417-1436.

\bibitem{CDL0}
C. De Lellis, and Sz\'{e}kelyhidi. Jr.
L, {\it On admissibility criteria
for weak solutions of the Euler equations},  Arch. Ration. Mech. Anal. 195(1)(2010), 225-260.


\bibitem{CDL1}
C. De Lellis, and Sz\'{e}kelyhidi. Jr.
L,   {\it The h-principle and the
equations of fluid dynamics}, Bull. Amer. Math. Soc. 49(3)(2012), 347-375.


\bibitem{CDL2}
C. De  Lellis, and Sz\'{e}kelyhidi. Jr.L, {\it Dissipative continuous
Euler flows}, Invent. Math. 193(2)(2013), 377-407


\bibitem{CDL3}
C. De Lellis, and Sz\'{e}kelyhidi. Jr.
L, {\it Dissipative Euler flows and
Onsager's conjecture}, Jour. Eur. Math. Soc.(JEMS)16(2014), 1467-1505.

\bibitem{CDDL}
C. De Lellis, D. Inauen, and Sz\'{e}kelyhidi. Jr.
L, {\it A Nash-Kuiper theorem for $C^{1,\frac{1}{5}-\kappa}$ immersions of surfaces in 3 dimensions},
 Rev. Mat. Iberoam. 34(2018), 1119-1152

\bibitem{DUR}
J. Duchon, and R. Raoul, {\it Inertial energy dissipation for weak solutions of incompressible Euler and Navier-Stokes equations},
Nonlinearity. 13(2000), 249-255


\bibitem{Hou}
T. Hou and C. Li, {\it Global well-posedness of the viscous Boussinesq equations}, DCDS, Series A, 12(2005), 1-12.


\bibitem{ISOH1}
P. Isett, and Oh, S.-J, {\it A heat flow approach to Onsager's conjecture for the Euler equations on manifolds},
Trans. Amer. Math. Soc, 368(2016), 6519-6537.

\bibitem{ISOH2}
P. Isett, Oh, S.-J, {\it On nonperiodic Euler flows with H\"{o}lder regularity}, Arch. Ration. Mech. Anal, 221(2016), 725–804.


\bibitem{IS1}
P. Isett,  {\it H\"{o}lder continuous Euler
flows in three dimensions with
compact support in time}, arXiv:1211.4065, 2012.

\bibitem{IS2}
P. Isett,  {\it A proof of Onsager's conjecture}, Ann. of. Math, 188(2018), 871-963.

\bibitem{IS3}
P. Isett, {\it On the Endpoint Regularity in Onsager's Conjecture},  arXiv:1706.01549

\bibitem{ISV2}
P. Isett, and V. Vicol, {\it H\"{o}lder continuous solutions of active scalar equations},
 Ann. of. PDE, DOI 10.1007/s40818-015-0002-0


\bibitem{LT}
T. Luo, and Titi, {\it Non-uniqueness of Weak Solutions to Hyperviscous Navier-Stokes Equations - On Sharpness of J.-L. Lions Exponent},  arXiv:1808.07595

\bibitem{LTZ}
T. Luo, T. Tao and L.Zhang, {\it Finite energy weak solutions of 2d Boussinesq Equations with diffusive temperature}, arXiv:1901.09179

\bibitem{LX}
T. Luo, and Z. Xin , {\it H\"{o}lder continuous solutions to the 3d Prandtl system}, arXiv:1804.04285

\bibitem{L}
Xiaoyutao Luo, {\it Stationary solution and nonuniquenes of weak solution for the Navier-Stokes euation on high dimensions}, Arch. Ration. Mech. Anal, 233(2019), 701-747

\bibitem{Ma}
A. Majda,  {\it  Introduction to PDEs and Waves for the Atmosphere and Ocean},
Courant
Lecture Notes in Mathematics, Vol. 9. AMS/CIMS, 2003

\bibitem{MS0}
S.Modena and G.Sattig, {\it Convex solutions to the transport equation with full dimensional concentration}, arXiv:1902.08521

\bibitem{MS1}
S.Modena and L\'{a}szl\'{o} Sz\'{e}kelyhidi Jr, {\it Non-uniqueness for the transport equation with Sobolev vector fields}, to appear in Ann. of. PDE

\bibitem{MS2}
S.Modena and L\'{a}szl\'{o} Sz\'{e}kelyhidi Jr, {\it Non-Renormalized solution to the continuity equation}, arXiv:1806.09145

\bibitem{NASH}
J. Nash, {\it $C^{1}$ isometric embeddings},  Ann. of. Math. 60(1954), 383-396.

\bibitem{Nov}
M.D.Novack, {\it Non-uniqueness of weak solutions to the 3D QG equations}, arXiv:1812.08734

\bibitem{ONS}
L. Onsager, {\it Statistical hydrodynamics}, Nuovo Cimento(9)(1949), 279-287.

\bibitem{Pe}
J. Pedlosky, {\it Geophysical fluid dynamics}, Springer, New-York, 1987

\bibitem{VS}
V. Scheffer,  {\it An inviscid flow with compact support in space-time},
J. Geom. Anal. (1993),343-401.


\bibitem{ASH1}
A. Shnirelman, {\it Weak solution with decreasing energy of incompressible Euler equations}, Comm. Math. Phys. 210(2000), 541-603

\bibitem{ASH2}
A. Shnirelman, {\it On the nonuniqueness of weak solution of Euler equation}, Comm. Pure Appl. Math. 50(12)(1997), 1261-1286







\bibitem{RSH}
R. Shvydkoy, {\it Convex integration for a class of active scalar equations}, J. Amer. Math. Soc. 24(4)(2011), 1159-1174


\bibitem{SH}
R. Shvydkoy, {\it Lectures on the Onsager conjecture},  Dis. Con. Dyn. Sys. 3(3)(2010), 473-496.

\bibitem{Sz}
Jr. L. Sz\'{e}kelyhidi, {\it From Isometric Embeddings to Turbulence}, Lecture note, 2012.

\bibitem{TZ}
T. Tao, and L. Zhang, {\it On the continuous periodic weak solution of Boussinesq equations}, SIAM, J.Math.Anal, 50(2018), 1120-1162

\bibitem{TZ1}
T. Tao, and L. Zhang, {\it H\"{o}lder continuous solution of Boussinesq equations with compact support}, J. Funct. Anal, 272(2017), 4334-4402.

\bibitem{TZ2}

T. Tao, and L. Zhang, {\it H\"{o}lder continuous periodic solution of Boussinesq equations with partial viscosity},  Calc. Var. Partial Differential Equations (2018)
https://doi.org/10.1007/s00526-018-1337-7



\end{thebibliography}
\end{document}